\documentclass{article}
\pdfoutput=1

\usepackage[left=3cm,right=3cm,top=3cm,bottom=3cm]{geometry}
\usepackage{dcolumn}
\usepackage{amsmath,amssymb,amsthm,mathtools}
\usepackage{tikz,tikz-cd}
\usepackage{bm}
\usepackage{framed} 

\newcolumntype{d}[1]{D{.}{.}{#1}}
\DeclareMathOperator{\low}{low}
\DeclareMathOperator{\USE}{use}
\DeclareMathOperator{\otp}{otp}
\DeclareMathOperator{\nf}{nf}

\newtheorem{theorem}{Theorem}[section]
\newtheorem{definition}[theorem]{Definition}
\newtheorem*{definition*}{Definition}
\newtheorem{lemma}[theorem]{Lemma}
\newtheorem*{lemma*}{Lemma}
\newtheorem{claim}[theorem]{Claim}
\newtheorem*{claim*}{Claim}
\newtheorem{corollary}[theorem]{Corollary}
\newtheorem*{corollary*}{Corollary}
\newtheorem{fact}[theorem]{Fact}
\newtheorem*{fact*}{Fact}
\newtheorem*{mainlemma1*}{Main Lemma 1}
\newtheorem*{mainlemma2*}{Main Lemma 2}

\begin{document}
\title{Nonlowness independent from frequent mind changes}
\author{Li Ling Ko\footnote{I would like to thank my advisor Prof. Peter
Cholak for his patience and guidance, without which this piece of work will
not be possible.} \footnote{Department of Mathematics, University of Notre
Dame, Hayes-Healy, Notre Dame, Indiana 46556, USA} \\ lko@nd.edu}
\date{\today}
\maketitle

\begin{abstract}
It was recently shown that the computably enumerable (c.e.) degrees that embed the critical triple \cite{downey2007totally} and the $M_3$ lattice structure \cite{downey2015transfinite} are exactly those that change their minds sufficiently often. Therefore the embeddability strength of a c.e.\ degree has much to do with the degree's mind change frequency. Nonlowness is another common measure of degree strength, with nonlow degrees expected to compute more degrees than low ones. We ask if nonlowness and frequent mind changes are independent measures of strength. Downey and Greenberg (2015) claimed this to be true without proof, so we present one here. We prove the claim by building low and nonlow c.e.\ sets with an arbitrary number of mind changes. We base our proof on our direct construction of a nonlow $\mathrm{low}_2$ array computable set. Such sets were always known to exist, but also never constructed directly in any publication.
\end{abstract}

\vspace{1em}
\noindent \textbf{Keywords:} approximable, degrees, low
computably enumerable sets and degrees, $\alpha$-computably approximable,
array non computable, low and low$_2$

\section{Introduction} \label{sec:introduction}

For many years one of the major themes in the study of the c.e.\ degrees is identifying the lattices that can be embedded below $0'$, and characterizing the degrees below which one can embed a given lattice. It has been known for some time that all distributive lattices can be embedded below any c.e.\ degree \cite{lachlan1972embedding}. As for non-distributive lattices, which are the lattices that contain either the $N_5$ or $M_3$ lattice (Figure~\ref{fig:lattices}) as sub-lattices, it is also known for some time that being able to embed $N_5$ is exactly the same as being non-contiguous \cite{downey1997contiguity}. However little was known about $M_3$ until 2015, when it was finally shown that a c.e.\ degree embeds $M_3$ if and only if the degree ``changes its mind frequently'' \cite{downey2015transfinite}. More precisely, a c.e.\ set $A$ needs to have a computable-approximation that ``changes its mind more than $\omega^\omega$-times'' before $A$ can embed $M_3$. We formally define what it means for a degree to ``change its mind $\alpha$-times'' in Section~\ref{sec:alpha}.\\

\begin{figure}[h!]
    \begin{tikzpicture}
        \tikzset{M3/.pic ={
            \node[circle, fill=black, inner sep=0pt, minimum size=0.2cm, label=$a$] (a) at (0, 0) {};
            \node[circle, fill=black, inner sep=0pt, minimum size=0.2cm, label=left:$a_0$] (a0) at (-1.5,-1.5) {};
            \node[circle, fill=black, inner sep=0pt, minimum size=0.2cm, label=left:$b$] (b) at (0,-1.5) {};
            \node[circle, fill=black, inner sep=0pt, minimum size=0.2cm, label=right:$0$] (0) at (0,-3) {};
            \node[circle, fill=black, inner sep=0pt, minimum size=0.2cm, label=right:$a_1$] (a1) at (1.5,-1.5) {};
            \draw [-] (a) -- (a0) -- (0) -- (a1) -- (a);
            \draw [-] (a) -- (b) -- (0);
            \node[align=center] at (0,-4.5) {\underline{$M_3$}\\ $a=a_0\vee a_1 =b\vee a_0 =b\vee a_1$,\\ $a_0,a_1,b$ incomparable,\\ $a_0\wedge a_1=b\wedge a_0 =b\wedge a_1=0$};
        }}
        \tikzset{N5/.pic ={
            \node[circle, fill=black, inner sep=0pt, minimum size=0.2cm, label=$a$] (a) at (0, 0) {};
            \node[circle, fill=black, inner sep=0pt, minimum size=0.2cm, label=left:$b$] (b) at (-1,-1) {};
            \node[circle, fill=black, inner sep=0pt, minimum size=0.2cm, label=left:$a_0$] (a0) at (-1,-2) {};
            \node[circle, fill=black, inner sep=0pt, minimum size=0.2cm, label=right:$0$] (0) at (0,-3) {};
            \node[circle, fill=black, inner sep=0pt, minimum size=0.2cm, label=right:$a_1$] (a1) at (1.5,-1.5) {};
            \draw [-] (a) -- (b) -- (a0) -- (0) -- (a1) -- (a);
            \node[align=center] at (0,-4.5) {\underline{$N_5$}\\ $a=a_0\vee a_1 =b\vee a_1$,\\ $a_0,b|_T a_1$, $b>_Ta_0$,\\ $a_0\wedge a_1=b\wedge a_1 =0$};
        }}
        \tikzset{triple/.pic ={
            \node[circle, fill=black, inner sep=0pt, minimum size=0.2cm, label=$a$] (a) at (0, 0) {};
            \node[circle, fill=black, inner sep=0pt, minimum size=0.2cm, label=left:$a_0$] (a0) at (-1.5,-1.5) {};
            \node[circle, fill=black, inner sep=0pt, minimum size=0.2cm, label=left:$b$] (b) at (0,-1.5) {};
            \node (0) at (0,-3) {};
            \node[circle, fill=black, inner sep=0pt, minimum size=0.2cm, label=right:$a_1$] (a1) at (1.5,-1.5) {};
            \draw [-] (a) -- (a0);
            \draw [-] (a) -- (a1);
            \draw [-] (a) -- (b);
            \draw [dashed] (a0) -- (0);
            \draw [dashed] (a1) -- (0);
            \draw [dashed] (b) -- (0);
            \node[align=left] (A) at (0,1) {#1};
            \node[align=center] at (0,-4.5) {\underline{Critical-triple} \\$a=a_0\vee a_1 =b\vee a_0 =b\vee a_1$,\\ $a_0,a_1,b$ incomparable,\\ $(\forall c\leq_T a_0,a_1)\; [b \geq_T c]$};
        }}
        \path (-5.5,2) pic{N5={}};
        \path (0,2) pic{M3={}};
        \path (5.5,2) pic{triple={}};
    \end{tikzpicture}
    \caption{Lattices $N_5$, $M_3$, and critical triple structure. The critical triple is similar to $M_3$ but does not require meets $a_0\wedge a_1$, $b\wedge a_0$, or $b\wedge a_1$ to exist.} \label{fig:lattices}
\end{figure}

In a similar vein, to embed the critical triple, which is a structure resembling $M_3$ but with fewer restrictions (Figure~\ref{fig:lattices}), a c.e.\ degree only needs to change its mind more than $\omega$-times to succeed \cite{downey2007totally}. Therefore a degree's ability to embed certain lattices is highly dependent on the degree's fickleness (Downey and Greenberg use the phrase ``change mind frequently'', but we sometimes use the word ``fickle'' instead to be less verbose). This connection follows from our usual approach to constructing a c.e.\ degree that embeds a specific lattice, which is to prioritize the requirements for embeddability and satisfy them in turn. A more fickle degree is more likely to permit requirements to be satisfied, increasing the chances of a successful construction.\\

Motivated by how fickleness aids embeddability, Downey and Greenberg~\cite{downey2015transfinite} defined a hierarchy of c.e.\ degrees based on mind change frequency, in which degrees that are more fickle lie higher up the hierarchy. It turns out that c.e.\ degrees with ``reasonable'' fickleness are $\low_2$. Lying at the bottom level of the hierarchy are the most unfickle degrees, which turn out to be exactly the array computable ones~\cite{downey2015transfinite}.\\

Our work is based on this recent work of fickleness characterizing embeddability strength. Given that nonlowness is another measure of degree strength, we explore how lowness and nonlowness sits within the mind change hierarchy. We expect nonlow degrees to compute more degrees than low ones, but does that necessarily imply nonlows embed more types of lattices? This statement is believed to be false, for Downey and Greenberg~\cite{downey2015transfinite} claimed there are lows and nonlows on every level of the mind change hierarchy. But we are not aware of any formal proof of their claim, and provide one here.\\

Given an arbitrary ``reasonable'' ordinal $\alpha$, we construct low and nonlow c.e.\ sets that ``change their minds $\alpha$-times''. Our construction of the nonlow extends from our construction of a nonlow $\low_2$ c.e.\ set. It is not difficult to build nonlow $\low_2$'s, but we are not aware of a direct proof in the literature. We build such a set directly.\\

By distributing injury from nonlow requirements uniformly, our nonlow $\low_2$ set turns out to be array computable. Therefore, our construction also directly gives a nonlow array computable c.e.\ set, which are also sets whose proof of existence have always been indirect \footnote{First, construct an array computable, 1-topped, and incomplete c.e.\ set, then use the fact that every array computable and 1-topped degree is either incomplete or nonlow $\low_2$ \cite{downey1987t}. Alternatively, construct a nonlow c.e.\ degree that contains only a single $W$-degree and whose elements are all mitotic, then use the fact that every array noncomputable c.e.\ degree must contain a c.e.\ set that is non-$W$-mitotic \cite{downey1993array}.}.\\

After we build a nonlow array computable c.e.\ set $A$, given an arbitrary ``reasonable'' ordinal $\alpha$, we injure $A$ ``$\alpha$-more times'' but no more than that. Then $A$ will be a nonlow c.e.\ set that lies in the $\alpha$-level of the mind change hierarchy, as desired. To build a low set at the $\alpha$-level, we ``simplify'' the construction given by Downey and Greenberg (\cite{downey2015transfinite}, Lemma III.2.1), where the authors constructed a c.e.\ set at the $\alpha$-level. The authors used a tree in the construction, so the constructed set was not necessarily low. By tracking injury more carefully, we managed to avoid using a tree, ensuring the lowness of the constructed $\alpha$-level set.

\section{Nonlow, Array Computable}

We directly build a nonlow $\low_2$ c.e.\ set $A$ in Section~\ref{sec:nonlow-low2}, and show in Section~\ref{sec:array-computable} that $A$ is array computable. Our construction uses the ``usual'' tree framework to handle the infinite injury from the nonlow requirements. Refer to~\cite{soare1978recursively} Chapter 14 for priority constructions using trees. We introduce common terminologies in the following Section~\ref{sec:tree}.

\subsection{Tree constructions} \label{sec:tree}

We mainly follow terminology from \cite{downey2015transfinite} Chapter 1.1. Often in the construction we say that we pick a \emph{large} number as a use or a follower. Large refers to the first number greater than any number that has ever been seen or used up to the point of construction. Requirements are ordered $R_0<R_1<R_2<\ldots$, with smaller requirements having higher priority than larger ones.\\

\noindent Trees provide an intuitive framework for infinite injury constructions. Such constructions begin by fixing an ordering of the requirements $R_0<R_1<\ldots$, where each requirement has exactly one true outcome from an ordered and computable set $\texttt{outcomes} =\{o_0<o_1<o_2<\ldots\}$, with smaller outcomes having higher priority. The tree of construction is
\[\Lambda =\texttt{outcomes}^{<\omega}.\]

\noindent At each stage $s$ of the construction, we guess the outcomes of an initial segment of the requirements to get a node $\delta_s\in\Lambda$ of outcomes. Given $s\in\omega$ and $\delta\in\Lambda$, if $\delta\preceq\delta_s$, we say \emph{stage $s$ is an $\delta$-stage}, or \emph{$\delta$ is accessible at stage $s$}. Given $\delta,\delta'\in\Lambda$, we say \emph{$\delta$ lies to the left of $\delta'$}, written $\delta<_L\delta'$, to mean
\[\delta<_L\delta' \iff (\exists \eta\in\Lambda, o_0<o_1\in\texttt{outcomes})\; \left[\eta^\frown o_0\preceq \delta \text{ and } \eta^\frown o_1\preceq \delta'\right].\]

\noindent A node $\delta\in\Lambda$ is said to \emph{lie on the true path of outcomes} iff $\delta$ is accessed infinitely often but any node to the left of $\delta$ is only accessed finitely often. Formally,
\[\delta \text{ lies on true path of outcomes } \iff (\exists^\infty s)\; \left[\delta\preceq\delta_s \text{ and } (\forall \delta'<_L\delta)\; \left[\left|\{t: \delta'\preceq\delta_t \}\right|<\omega \right]\right].\]

\noindent Note that from this definition, the set of nodes that lie on the the true path must form a chain. We define the true path of outcomes $\delta_\omega$ as the union of this chain
\[\delta_\omega := \bigcup \{\delta\in\Lambda: \delta \text{ lies on the true path of outcomes}\},\]
and write $\delta\prec\delta_\omega$ to mean that $\delta$ lies on this true path. Often in tree constructions we want to show that $|\delta_\omega|=\omega$, i.e. the true path meets all requirements.

\subsection{Nonlow, Low$_2$} \label{sec:nonlow-low2}

We give a direct construction of the following known fact:
\begin{theorem} \label{thm:nonlow-low2}
    There exists a nonlow $\low_2$ c.e.\ set $A$.
\end{theorem}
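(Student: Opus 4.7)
The plan is to build $A$ c.e.\ by a priority argument on the tree of outcomes from Section~\ref{sec:tree}, meeting two families of requirements:
\begin{align*}
N_e &:\ \Phi_e^{\emptyset'}\neq\chi_{A'},\\
L_e &:\ \emptyset''\text{ decides whether }\Phi_e^A\text{ is total.}
\end{align*}
The $N_e$'s jointly give $A'\not\le_T\emptyset'$, making $A$ nonlow; the $L_e$'s push the $\Pi_2^A$ totality question into a $\emptyset''$-computable question about the true path, making $A$ $\low_2$ (this is \emph{not} automatic for c.e.\ sets, since $A\le_T\emptyset'$ only yields $A''\le_T\emptyset'''$). Each $L_e$-node carries the standard outcomes $\infty<f$, and each $N_e$-node carries outcomes $f<\infty$ recording whether its diagonalization settles after finitely or infinitely many restarts---the latter is the source of the ``infinite injury from nonlow requirements'' mentioned in the excerpt.

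At an $N_e$-node $\delta$ I would pick a large fresh witness $w_\delta$ and then (by $s$-$m$-$n$, taken computably in $w_\delta$) a follower $x_\delta$ with the property that $\Phi_{x_\delta}^A(x_\delta)\downarrow$ iff $w_\delta\in A$. At each $\delta$-stage $s$ inspect $\Phi_e^{\emptyset'_s}(x_\delta)$: if it converges to $0$, enumerate $w_\delta$ into $A$, thereby placing $x_\delta\in A'$ and diagonalizing; if it converges to $1$, restrain $A$ below the current use of the computation so that $x_\delta\notin A'$ is preserved; otherwise wait. Whenever a number below the use of a committed computation later enters $\emptyset'$, abandon the pair $(x_\delta,w_\delta)$ and pick fresh copies; the outcome $\infty$ is declared exactly when this abandonment happens cofinally often. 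Each $L_e$-node runs the standard preservation strategy: under $\infty$ it preserves the largest initial segment on which $\Phi_e^A$ has converged, under $f$ it imposes no restraint.

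The verification proceeds along standard tree-construction lines: one shows that $\delta_\omega$ exists and is $\emptyset''$-computable; that each $L_e$-outcome along $\delta_\omega$ correctly records totality of $\Phi_e^A$, giving $\low_2$; and that each $N_e$ is met at its true-path node, either by the strategy stabilizing on a pair $(x_\delta,w_\delta)$ that successfully diagonalizes, or vacuously because the $\infty$-outcome forces $\Phi_e^{\emptyset'}$ to be partial on the successive followers chosen. The main obstacle is the usual tension between the positive action of $N_e$ and the growing restraints of higher-priority $L_{e'}$-nodes on their $\infty$-branches: under $L_{e'}^\infty$ the restraint can grow unboundedly and repeatedly invalidate $N_e$'s previous choice of $(x_\delta,w_\delta)$. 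Handling this requires the standard Sacks-style coherence argument that along the true path the restraints seen at successive $\delta$-accesses are bounded by the current preserved length of agreement, so that $N_e$'s eventual choice of $(x_\delta,w_\delta)$ escapes every relevant restraint and its action is permanent---this coherence is the crux of the construction.
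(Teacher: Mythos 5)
Your proposal takes a genuinely different route to nonlowness than the paper does, but both of the crucial points are waved past, and at least the first one appears to be a real error rather than just a missing detail.

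\textbf{Nonlowness.}  The paper does not diagonalize $A'$ directly against $\Phi^{\emptyset'}_e$; it builds a functional $\Gamma$ and meets $P_e$ by diagonalizing $\chi_{\Gamma^A}$ against every $\Delta^0_2$ function $\lim_s \psi^e_s$.  The critical design feature is that the $P_e$-node keeps a \emph{fixed} follower $y$; all that cycles is the use.  Consequently, if the $P_e$-node has outcome $\infty$ (keeps cycling), then $\lim_s \psi^e_s(y)$ genuinely fails to exist, so $P_e$ is vacuously satisfied.  In your scheme the $N_e$-node re-picks the \emph{entire pair} $(x_\delta,w_\delta)$ whenever $\emptyset'$ changes below a committed use, and you assert that the $\infty$-outcome ``forces $\Phi_e^{\emptyset'}$ to be partial on the successive followers.''  That is not justified and I believe it is false: an $\emptyset'$-change below the observed use happens because the early $\emptyset'_s$-approximation was merely \emph{premature}, and this is fully compatible with $\Phi_e^{\emptyset'}$ being total.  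Since putting $w_\delta$ into $A$ is irreversible (once $w_\delta\in A$ you have $x_\delta\in A'$ forever), a premature enumeration when the true value turns out to be $1$ is a loss you cannot undo, and nothing in your strategy prevents this from happening at every follower chosen.  You need either an argument that the number of abandonments at a true-path $N_e$-node is finite, or a coding in which the $\infty$-outcome really does force vacuous satisfaction; the paper's $\Gamma$-functional is precisely such a coding.

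\textbf{The $\low_2$ verification.}  You call the key tension ``the standard Sacks-style coherence argument,'' but that is not what is needed here, and this is where the actual content of the paper's proof lives.  The paper introduces a quota system: for each $\eta(x)$, only finitely many positive nodes $\rho$ (those with $\rho\in\texttt{quota}(x)$) are ever permitted to injure $\Phi^A_\eta(x)$, and each such $\rho$ is permitted only a fixed finite number of actions before it must wait for $\eta(x)$ to be $\rho$-correct.  The verification then proceeds by an ``edge-layer'' induction showing that even cascading injuries (a $\rho$ picks a fresh use, the computation is injured by a \emph{different} node $\rho'$ before $\rho$ can enumerate, the use of $\eta(x)$ grows past $\rho$'s witness, and $\rho$ then unexpectedly injures $\eta(x)$) bottom out.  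Without the quota, infinitely many $\rho$-nodes below $\eta^\frown\infty$ could each contribute one injury to $\Phi^A_\eta(x)$, and the cascade has no finite base.  Sacks-style coherence (drop-back of restraint at expansionary stages) addresses a different phenomenon and does not supply this bound.  So the low$_2$ half of your verification is not merely abbreviated; the mechanism that makes it true is missing.
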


\underline{$N$-module ($\low_2$)}: To ensure $A$ is $\low_2$, $\emptyset''$ uniformly computes a series of computable sets $S_0,S_1,\ldots \equiv_T \emptyset$ such that $\forall e \in\omega$:
\[N_e: (\forall x)(\exists^\infty s\in S_e)\; [\Phi^A_e(x) [s]\downarrow] \implies \Phi^A_e \text{ total}.\]

\noindent To see how $N_e$ gives $\low_2$-ness, note that the left clause of $N_e$ is a $\Pi_2$-formula and therefore can be decided by $\emptyset''$. So $\emptyset''$ can determine if $\Phi^A_e$ is total, meaning that $\text{Tot}^A \leq_T \emptyset''$, making $A$ $\low_2$.\\

\noindent Let $N_e(x)$ denote the negative sub-requirement of $N_e$ that works to protect computation $\Phi^A_e(x)$. To reduce the number of variables used, if context is clear, we sometimes drop the subscript $e$ and write $N$ for $N_e$, and $N(x)$ for $N_e(x)$. Also, if $N=N_e$, we write $\Phi^A_N$ to mean $\Phi^A_e$.\\
 
\noindent To meet a single $N(x)$, wait for $\Phi^A_N\restriction x$ to converge for the first time. Then $N(x)$ will want to protect $\Phi^A_N(x)$ by ``initializing'' positive requirements $P$ that threaten $\Phi^A_N(x)$. But if $N(x)$ is overly protective, $P$-requirements may never get satisfied. As a compromise, each $N(x)$ is allocated a fixed \emph{quota} of requirements $P<N$ allowed to injure $N(x)$. The quota also limits the number of times a given $P<N$ can inflict injury. We provide details on quota allocation later. $N(x)$ ``initializes'' $P$ only if $P$ threatens to injure $\Phi^A_N(x)$ yet does not satisfy $N(x)$'s quota.\\

\underline{$P$-module (Nonlow)}: To make $A$ nonlow, fix a partial computable enumeration of the computable functions $\langle \langle \psi^e_s(x)\rangle_s \rangle_e$ and construct functional $\Gamma$ such that $\Gamma^A$ is c.e.\ in $A$ and satisfies positive requirements $\forall e\in\omega$:
\[P_e: (\forall x)\; \left[\lim_s \psi^e_s(x) \text{ exists} \right] \implies (\exists x)\; \left[\chi_{\Gamma^A}(x) \neq \lim_s \psi^e_s(x) \right].\]

\noindent To see how $P_e$ give nonlowness, note that $\Gamma^A$ is $\Sigma_1$ in $A$ and therefore computable by $K^A$. Also, the limits $\lim_s \psi^e_s$, when they exist, enumerate all the $\Delta^0_2$ functions by Shoenfield's limit lemma. Thus the $P_e$ requirements ensure no $\Delta^0_2$ function computes $K^A$, making $A$ nonlow.\\

\noindent To meet a single $P_e$, begin by picking a \emph{follower} $y$ for $P_e$. Wait for $\psi^e(y)[s]=0$. Then $P_e$ will want to pick a large \emph{use} $u$, and \emph{diagonalize $\Gamma^A$ out of $\psi^e$ at $y$} by declaring $\Gamma^{A\restriction u} (y)[s]\downarrow$. We say that $P_e$ wants to \emph{act via picking use}. But $P_e$ needs approval from all $N<P_e$ before acting so as not to injure these $N$-requirements too often. If some $N(x)$'s quota does not tolerate another action from $P_e$, then $P_e$ must wait for $\Phi^A_N(x)$ to stabilize first before acting. Quota for all $N(x)$'s is designed to be generous enough to allow $P_e$ to eventually act.\\

\noindent After acting, wait for $\psi^e(y)$ to change its mind to equal 1. Then $P_e$ will want to enumerate $u$ into $A$. The enumeration will diagonalize $\Gamma^A$ out of $\psi^e$ at $y$ because $\Gamma^{(A\cup\{u\})\restriction u} (y)[s]$ is not yet declared to converge and therefore diverges at stage $s$. We say that $P_e$ wants to \emph{act via enumerating use}. Like before, $P_e$ waits for approval from all $N<P_e$ to act. After acting, $P_e$ is considered to have no assigned use. So if $\psi^e(y)$ returns to 0 later, $y$ will need to be assigned a new use like before, and we repeat the earlier process of waiting for approval to act via picking use.\\

\noindent Observe that if the follower $y$ picked was such that $\lim_s \psi^e_s(y)$ does not exist, then $y$ will act infinitely often, enumerating infinite elements into $A$. Let $P_e(k)$ denote the sub-requirement of $P_e$ that works to enumerate $k$ elements into $A$ for $P_e$ since the beginning of the construction. To reduce the number of variables used, if context is clear, we drop the subscript $e$ and write $P$ for $P_e$, and $P(k)$ for $P_e(k)$. Also, if $P=P_e$, we write $\psi^P$ to mean $\psi^e$.\\

\underline{Overall Strategy:} The conflict between the $N$ and $P$ requirements is now clear: A given $P$ may enumerate infinite elements into $A$, injuring some $N$ infinitely often, yet each sub-requirement $N(x)$ of $N$ can tolerate only finite injury from all positive requirements. To distribute injury from $P$ across $N(x)$, we use a quota system where we assign each $N(x)$ a fixed quota $\texttt{quota}_N(x)$ of sub-requirements $P(k)$ allowed to injure $N(x)$. So if $P(k)$ lies in $\texttt{quota}_N(x)$, then $N(x)$ allows $P$ to act $k$ times. $\texttt{quota}_N(x)$ needs to be generous enough so that every $P$ gets infinite opportunities to act, yet also restrictive enough so that every $N(x)$ suffers only finite injury:
\begin{itemize}
    \item $(\forall N, P, k) (\exists \text{ cofinite } x)\; [P(k) \in \texttt{quota}_N(x)]$
    \item $(\forall N,x)\; [|\texttt{quota}_N(x)| <\omega]$.
\end{itemize}

\noindent To these ends, we set
\[\texttt{quota}_N(x) =\{P(k):\; (\exists k')\; [\langle P,k'\rangle < x \quad \&\quad 1\leq k\leq k']\}.\]

\noindent This quota is generous enough: Given $P(k)$, if $P$ wishes to act for the $k$-th time, then for all $x> \langle P,k\rangle$, $N(x)$ will allow $P$ to act. In other words, $N$ will allow $P$ to enumerate the $k$-th element the moment computations $\Phi^A_N(0), \ldots, \Phi^A_N(\langle P,k\rangle)$ have finalized.\\

\noindent At the same time, the quota is restrictive enough because $N(x)$ will only suffer injury in the scale of $|\texttt{quota}_N(x)|$, which is finite. Any $P$ with $P(1) \not\in \texttt{quota}_N(x)$ will always be initialized by $N(x)$ if $P$ threatens $\Phi^A_N(x)$, meaning there are only finitely many $P$ that can ever injure $N(x)$.\\

\noindent Consider when $N$ should allow $P$ to act via enumeration. If $P(1) \in\texttt{quota}_N(x)$, then $N(x)$ must never initialize $P$ if $P$ is to get enough chances to act. Therefore $N$ must always allow such $P$ to enumerate uses, even if $P$ has injured $N(x)$ $k$-times already and $P(k) \not\in \texttt{quota}_N(x)$.\\

\noindent Now consider when $N$ should allow $P$ to act via picking use. If $P$ wants to pick a new use, $N(x)$ will use the quota system to bound injury inflicted by $P$: If $P$ has acted $k$-times and $P(k+1)\in\texttt{quota}_N(x)$, then $N(x)$ will allow $P$ to pick a new use. But if $P(k+1)\not\in\texttt{quota}_N(x)$, $P$ must wait till computation $N(x)$ is ``correct'' first before being allowed to act. That is, $P$ must wait for the stage $s$ where for all $P'<P$, the use assigned to $P'$, if any, exceeds $\USE(\Phi^A_N(x)[s])$. By making $P$ wait, the excess injury on $N(x)$ after $P$'s $k$-th action can be bounded, even if $P$ was wrong about the ``correctness'' of computation $\Phi^A_N(x)$. The proof of boundedness uses a combinatorial argument, which we will show in Main Lemma 1.\ref{n:N-inf}.\\

\noindent Summarizing, the quota system distributes injury from $P$ across all $N(x)$ such that every $P$ only needs wait for finitely many $N(x)$ to finalize before being allowed to act, every $N(x)$ can only be injured by finitely many $P$, and each such $P$ can injure $N(x)$ only finitely often.\\

\underline{Tree Construction}: $P$-requirements can inflict either finite or infinite injury, so we say that $P$ has \emph{outcome} $\texttt{fin}$ or $\infty$. Given $N<P$, $N$ would play a different strategy depending on $P$'s outcome. If $P$ has outcome $\texttt{fin}$, then $N$ will wait for $P$ to finish acting before beginning to protect the $N$-computations. But if $P$ has outcome $\infty$, then $N$, knowing $P$ will enumerate larger and larger elements into $A$, will wait till the elements enumerated are too large to ever cause injury before protecting $\Phi^A_N$.\\

\noindent Similarly, $N$-requirements can injure $P$-requirements finitely or infinitely often, depending on whether $\Phi^A_N$ is non-total or total respectively. And $P$ changes strategy depending on the injury outcome of a higher priority $N<P$. If $N$ has outcome $\texttt{fin}$, then $P$ will wait for $N$ to stabilize before acting. And if $N$ has outcome $\infty$, then $P$ will wait for all sub-requirements $N(y)$ whose quota cannot tolerate another injury from $P$ to stabilize first before $P$ acts.\\

\noindent The framework of changing strategies based on the outcomes of higher priority requirements is often implemented using trees. Since our $P$ and $N$ requirements can have \emph{outcomes} from
\[\texttt{outcomes} :=\{\infty <\texttt{fin}\},\]
our tree of construction is
\[\Lambda :=\{\infty, \texttt{fin}\}^{<\omega}.\]
Prioritize the requirements $N_0 <P_0 <N_1 <P_1 <\ldots$. Then a node $\delta\in\Lambda$ works for $N_e$ if $|\delta|=2e$, and works for $P_e$ if $|\delta|=2e+1$. Given $\delta\in\Lambda$, if $\delta$ works for $N_e$, we write $\delta=\eta_e$. Often we drop the subscript $e$ and write $\eta$ in place of $\eta_e$ and $\Phi^A_\eta$ in place of $\Phi^A_e$. Also, we write $\eta(x)$ to refer to the sub-requirement of $\eta$ that works to protect $\Phi^A_\eta(x)$. Similarly, if $\delta$ works for $P_e$, we write $\delta=\rho_e$. We often drop the subscript $e$ and write $\rho$ for $\rho_e$ and $\psi^\rho$ for $\psi^e$. Also, we write $\rho(k)$ to mean the sub-requirement that works to get $\rho$ to act $k$ times since the beginning of the construction (as opposed to since the last time $\rho$ was initialized).\\

\noindent Let the current stage be $s$. Given a node $\delta\in\Lambda$, depending whether $\delta=\eta$ or $\delta=\rho$, we initialize and design $\delta$-strategy as follows:\\

\underline{Initialize $\eta$:} Do nothing.\\

\underline{Outcome of $\eta$:} Roughly speaking, we guess that $\eta$ has outcome $\infty$ iff more $\Phi^A_\eta$ computations have converged since the previous $\eta$-stage. However some of these computations may not be ``correct'' yet because they will subsequently be injured by $\rho\prec\eta$ with infinite outcome. Specifically, if $\rho^\frown\infty \preceq \eta$ and $\rho$'s use $u$ does not exceed $\USE(\Phi^A_\eta(x)[s] \downarrow)$, then $u$ will eventually be enumerated into $A$ because $\rho$ is expected to act infintely often. The enumeration injures computation $\Phi^A_\eta(x)$, so this computation is ``incorrect'' at stage $s$. Therefore when we guess the outcome of $\eta$, we count only the number of ``correct'' $\eta$-computations, and let $\eta$ have outcome $\infty$ iff this number has increased.\\

\noindent Formally, given $x\in\omega$ and comparable nodes $\delta,\delta'\in\Lambda$, we say \emph{computation $\Phi^A_\delta(x)[s]$ is $\delta'$-correct at stage $s$} if for all $\rho\preceq \delta'$ such that $\rho=\delta'$ or $\rho^\frown\infty \preceq \delta'$, if $\rho$ has an assigned use $u$ at stage $s$, then $u >\USE(\Phi^A_\delta(x)[s])$. Also, define the \emph{length of $\eta$ at stage $s$} as
\[l_s(\eta) :=\max \left\{x\leq s: (\forall y<x)\; \left[\Phi^A_\eta(y)[s]\downarrow \text{ and is } \eta\text{-correct} \right]\right\}.\]
We say \emph{$s$ is an $\eta$-expansionary stage} iff
\[l_s(\eta) >\max\{l_{s'}(\eta):\; s'<s \text{ and } s' \text{ is an } \eta\text{-stage}\},\]
and let $\eta$ have outcome $\infty$ iff $s$ is $\eta$-expansionary.\\

\underline{$(\eta^\frown\texttt{fin})$-strategy:} Do nothing.\\

\underline{$(\eta^\frown\infty)$-strategy:} Following the discussion in ``Overall Strategy'', given $x\in\omega$, allocate $\eta(x)$ a fixed quota $\texttt{quota}_\eta(x)$ to distribute injury from positive requirements across negative ones in a generous yet restrictive manner:
\begin{align} \label{eq:quota}
    \nonumber \texttt{quota}_\eta(x) =&\{\rho(k):\; \Phi^A_\eta(x) \text{ allows } \rho \text{ to act } k\text{-times}\}\\
    :=&\{\rho(k):\; (\exists k')\; \left[\langle \rho,k'\rangle < x\quad \&\quad 1\leq k\leq k'\right]\}.
\end{align}

\noindent Since $\texttt{quota}_\eta(x)$ does not depend on $\eta$, we drop the subscript $\eta$ and just write $\texttt{quota}(x)$. If $\rho(1) \in \texttt{quota}(x)$, we say \emph{$\rho$ lies in the quota of $x$}, and write $\rho\in\texttt{quota}(x)$. Define the \emph{quota for $\rho$ from $x$} as
\[\texttt{quotaFor}_x(\rho) :=\max \{k:\; \rho(k) \in \texttt{quota}(x)\}.\]
We say \emph{$\rho$ has exhausted its quota from $x$ at stage $s$} if $\rho$ has acted $k$-times at stage $s$ since the beginning of construction (as opposed to since the last time $\rho$ was initialized), but $\rho(k+1) \not\in \texttt{quota}(x)$. From earlier discussion, given $x\in\omega$, \emph{$\eta(x)$ allows $\rho\succeq \eta^\frown\infty$ to act via picking use at stage $s$} iff any of the following hold:
\begin{itemize}
    \item $\Phi^A_\eta(x)$ is $\rho$-correct at stage $s$
    \item $\rho$ has not yet exhausted its quota from $x$ at stage $s$
\end{itemize}
\emph{$\eta(x)$ allows $\rho$ to act via enumerating use at stage $s$} iff any of the following hold:
\begin{itemize}
    \item The use of $\rho$ at stage $s$ exceeds $\USE(\Phi^A_\eta(x)[s])$
    \item $\rho \in\texttt{quota}(x)$.
\end{itemize}
\emph{$\eta$ allows $\rho$ to act at stage $s$} iff for all $x<l_s(\eta)$, $\eta(x)$ allows $\rho$ to act at stage $s$.\\

\noindent Note that by $\eta(x)$'s strategy for allowing $\rho$ to act via enumeration, the only $\rho$ that can injure $\eta(x)$ are those that lie in the quota of $x$, and there are only finitely many such $\rho$. Also, given $\rho,\eta\in\Lambda$, only finitely many $\eta(x)$ can initialize $\rho$ because $\rho\in\texttt{quota}(x)$ for all large enough $x$. Therefore the quota system is both restrictive enough for $\eta(x)$ requirements and generous enough for $\rho$ requirements.\\

\underline{Initialize $\rho$:} Destroy the follower and use assigned to $\rho$, if any. So the next time $\rho$ is visited, $\rho$ is considered to have neither follower nor use.\\

\underline{Outcome of $\rho$:} If $\rho$ does not have a follower $y$ at this stage $s$, assign a large one. By our construction, if $\rho$ has an assigned use $u$ then $\Gamma^{A\restriction u}(y)[s] \downarrow$, and if $\rho$ does not have an assigned use, then $\Gamma^A(y)[s] \uparrow$. $\rho$ has outcome $\infty$ if $\psi^\rho(y)$ has changed its mind again such that $\Gamma^A(y)$ is no longer diagonalized out of $\psi^\rho(y)$. Formally, $\rho$'s outcome is $\infty$ iff any of the following hold:
\begin{enumerate}
    \item $\psi^\rho_s(y)=0$ and $\rho$ has no assigned use
    \item $\psi^\rho_s(y)=1$ and $\rho$ has an assigned use
\end{enumerate}
In the first case we say \emph{$\rho$ wants to act via picking use}, and in the second case we say \emph{$\rho$ wants to act via enumerating use}.\\

\underline{$(\rho^\frown\texttt{fin})$-strategy:} Let $y$ be the follower assigned to $\rho$. If $\psi^\rho_s(y)=0$ and $\rho$ has an assigned use $u$ at this stage, then \emph{$\rho$ diagonalizes $\Gamma^A_\rho$ out of $\psi^\rho$} by declaring $\Gamma^{A\restriction u}(y)[s]\downarrow$.\\

\underline{$(\rho^\frown\infty)$-strategy:} For $\rho$'s outcome to be $\infty$, $\rho$ must have wanted to act. If $\rho$ wants to act via picking use, then $\rho$ will want to pick a new large use $u$ for its follower $y$ and diagonalize $\Gamma^A_\rho$ out of $\psi^\rho$ by declaring $\Gamma^{A\restriction u} (y)[s] \downarrow$. If $\rho$ wants to act via enumerating use $u$, then $\rho$ will want to enumerate $u$ into $A$, so the next time $\rho$ is accessed, $\rho$ is considered to have no assigned use. \emph{$\rho$ is allowed to act} only if every $\eta^\frown\infty\preceq \rho$ allows $\rho$ to act at stage $s$. Refer to $(\eta^\frown\infty)$-strategy for what it means for $\eta$ to allow $\rho$ to act. If $\rho$ is not allowed to act via enumeration, then all $\delta\succeq \rho$ and $\delta\geq_L \rho$ will be initialized.\\

Playing the $\eta$ and $\rho$ strategies in a tree framework, we construct the nonlow $\low_2$ c.e.\ set $A$:

\begin{framed}
    \underline{Stage $s$:} Let $\delta_{s,0}$ be the empty node. From step $e=0$ to $s$: Determine the outcome $o\in\{\infty, \texttt{fin}\}$ of $\delta_{s,e}$. Set $\delta_{s,e+1} =\delta_{s,e}^\frown o$, and initialize all nodes to the right of $\delta_{s,e+1}$. If $\delta_{s,e+1} =\rho^\frown\texttt{fin}$, play the $(\rho^\frown\texttt{fin})$-strategy described above to diagonalize $\Gamma^A_\rho$ out of $\psi^\rho$.\\
    
    \noindent At the end of step $s$, we will get a node $\delta_s :=\delta_{s,s} \in\Lambda$ of length $s$. Some of the positive initial segments $\rho\preceq \delta_s$ may have outcome $\infty$, meaning that they want to act. Let
    \begin{align*}
        \Theta &=\{\rho\preceq\delta_s:\; \rho^\frown\infty \preceq \delta_s, \text{ and } \rho \text{ wants to pick use at stage } s \text{ and is allowed}\}\\
        &\cup\{\rho\preceq\delta_s:\; \rho^\frown\infty \preceq \delta_s, \text{ and } \rho \text{ wants to enumerate use at stage } s\}.
    \end{align*}
    
    \noindent If $\Theta$ is empty, go to the next stage. Otherwise, select one $\rho\in\Theta$. This $\rho$ will be called \emph{the selected node at stage $s$}, and is selected as:
    \[\rho =\underset{\rho'\in\Theta}{\mathrm{argmin}}\; \{\langle \rho',k\rangle:\; \rho' \text{ has been selected } k \text{-times before stage } s\}.\]
    Play the $(\rho^\frown\infty)$-strategy described above. Go to next stage.
\end{framed}

\begin{lemma} \label{lemma:delta-omega}
    $|\delta_\omega| =\omega$.
\end{lemma}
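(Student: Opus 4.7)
The lemma is a routine structural property of tree constructions with finite outcome sets, and the plan is a simple induction on the level $n$ that avoids any detailed analysis of $\eta$-expansionary stages, followers, uses, or the quota system. I will show that for every $n\in\omega$ there is a (necessarily unique) node of length $n$ lying on $\delta_\omega$; together with the paper's observation that the nodes on the true path form a chain with union $\delta_\omega$, this yields $|\delta_\omega|=\omega$.

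The base case $n=0$ is immediate: the empty node is accessed at every stage (step $e=0$ of the main loop always executes), and nothing lies strictly to its left, so $\epsilon\prec\delta_\omega$. For the inductive step, suppose $\delta\prec\delta_\omega$ with $|\delta|=n$. By hypothesis $\delta$ is accessed at infinitely many stages while no node strictly left of $\delta$ is. The key observation is that whenever $\delta$ is accessed, the construction computes exactly one outcome $o\in\{\infty,\texttt{fin}\}$ and sets $\delta_{s,n+1}=\delta^\frown o$. Since the outcome set is binary, pigeonhole yields some outcome chosen at infinitely many $\delta$-stages; let $o$ be the leftmost such. Then $\delta^\frown o$ is accessed infinitely often. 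Moreover, given any $\delta'<_L\delta^\frown o$, the branch point of $\delta'$ and $\delta^\frown o$ is either $\delta$ itself---in which case $\delta'\succeq\delta^\frown o'$ for some $o'<o$, and $\delta^\frown o'$ is accessed only finitely often by the choice of $o$, so neither is $\delta'$---or a strict prefix of $\delta$, in which case $\delta'<_L\delta$ and $\delta'$ is accessed only finitely often by the induction hypothesis. Either way $\delta'$ is accessed only finitely often, so $\delta^\frown o\prec\delta_\omega$, completing the induction.

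The main obstacle is essentially nonexistent: because each node has only finitely many possible outcomes and every access produces exactly one, the infinitude of $\delta_\omega$ is a purely combinatorial feature of the tree skeleton, independent of what the $\eta$- and $\rho$-strategies actually do at accessed stages. The substantive verifications---that the requirements $N_e$ and $P_e$ are actually satisfied along $\delta_\omega$, that positive requirements wanting to act infinitely often are eventually selected and permitted by the quota system, and that every $\eta(x)$ suffers only finite injury---are independent matters to be addressed in the Main Lemmas.
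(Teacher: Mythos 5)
Your proof is correct and amounts to the detailed unpacking of the paper's one-line argument, which simply appeals to the facts that $|\delta_s|$ is increasing in $s$ and that $\Lambda$ is finitely branching. The pigeonhole-plus-induction you give is precisely how one would spell out this K\"onig-lemma-style appeal, so the two proofs take essentially the same approach.
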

\begin{proof}
    Follows immediately from the facts that $|\delta_s|$ is increasing in $s$ and that $\Lambda$ is finitely branching.
\end{proof}

\begin{mainlemma1*}
    Given $n\in\omega$, let $\delta=\delta_\omega \restriction n \in \Lambda$.
    \begin{enumerate}
        \item \label{n:P-init} If $\delta=\rho$ then $\rho$ eventually stops being initialized. Thus $\rho$ has a final follower $y_\rho$.
        \item \label{n:P-fin} If $\delta=\rho$ and $\rho^\frown\texttt{fin} \prec \delta_\omega$, then if $\lim_s \psi^\rho_s(y_\rho)$ exists the limit will not equal $\chi_{\Gamma^A} (y_\rho)$.
        \item \label{n:P-inf} If $\delta=\rho$ and $\rho^\frown\infty \prec \delta_\omega$, then $\rho$ will act infinitely often. Thus $\lim_s \psi^\rho_s(y_\rho)$ does not exist.
        \item \label{n:N-init} If $\delta=\eta$, then $\eta$ eventually stops being initialized.
        \item \label{n:N-fin} If $\delta=\eta$ and $\eta^\frown\texttt{fin} \preceq \delta_\omega$, then $\Phi^A_\eta$ is not total.
        \item \label{n:N-inf} If $\delta=\eta$ and $\eta^\frown\infty \preceq \delta_\omega$, then $\Phi^A_\eta$ is total.
    \end{enumerate}
\end{mainlemma1*}

\vspace{2em}
\noindent We prove Main Lemma 1 after this immediate corollary:
\begin{corollary} \label{cor:A-nonlow-low2}
    $A$ is $\low_2$ nonlow.
\end{corollary}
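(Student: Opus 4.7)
The plan is to derive the corollary directly from Main Lemma~1 and Lemma~\ref{lemma:delta-omega} by case analysis on the outcomes along the true path. By Lemma~\ref{lemma:delta-omega}, $|\delta_\omega|=\omega$, so every requirement has a unique true-path node: I write $\eta_e := \delta_\omega\restriction 2e$ and $\rho_e := \delta_\omega\restriction(2e+1)$, and treat the $P_e$'s (for nonlowness) and the $N_e$'s (for $\low_2$-ness) separately.

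For nonlowness, fix $e$. Part~\ref{n:P-init} of Main Lemma~1 pins down a final follower $y_{\rho_e}$ for $\rho_e$, after which I split on its outcome. If $\rho_e^\frown\infty\prec\delta_\omega$, part~\ref{n:P-inf} shows $\lim_s\psi^e_s(y_{\rho_e})$ does not exist, killing the hypothesis of $P_e$. If $\rho_e^\frown\texttt{fin}\prec\delta_\omega$, part~\ref{n:P-fin} shows that whenever this limit does exist it disagrees with $\chi_{\Gamma^A}(y_{\rho_e})$, so $y_{\rho_e}$ is an explicit witness for the conclusion. Either way $P_e$ is met, and the $P$-module discussion (Shoenfield plus $\Gamma^A\leq_T K^A$) packages this into nonlowness of $A$.

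For $\low_2$-ness, I first note that $\delta_\omega$ is uniformly computable from $\emptyset''$: inductively, deciding whether $\delta^\frown\infty$ or $\delta^\frown\texttt{fin}$ lies on the true path reduces to the $\Sigma^0_2$ question ``is $\delta^\frown\infty$ accessed infinitely often?'', so $\emptyset''$ produces an index for $\eta_e$ uniformly in $e$. I then set $S_e:=\{s:\ s\text{ is an }\eta_e\text{-expansionary stage}\}$, a computable set whose index is obtained uniformly from $\eta_e$. To verify $N_e$: if $\eta_e^\frown\infty\prec\delta_\omega$, part~\ref{n:N-inf} makes $\Phi^A_e$ total, so $N_e$'s conclusion holds unconditionally; if $\eta_e^\frown\texttt{fin}\prec\delta_\omega$, then the $\eta_e$-expansionary stages coincide with the finitely many $\eta_e^\frown\infty$-stages, so $S_e$ is finite and $N_e$'s hypothesis fails. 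The $N$-module discussion then yields $\text{Tot}^A\leq_T\emptyset''$, i.e.\ $A$ is $\low_2$. The only delicate step is this $\emptyset''$-uniformity, and with outcomes restricted to $\{\infty,\texttt{fin}\}$ it reduces to a routine $\Sigma^0_2$ induction on $|\delta|$.
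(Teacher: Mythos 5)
Your argument is correct and matches the paper's proof in all essentials: the $P_e$ verification is identical (case-split on the true-path outcome of $\rho_e$, using Main Lemma 1.\ref{n:P-fin} and 1.\ref{n:P-inf}), and the $\low_2$ verification rests on the same core fact from Main Lemma 1.\ref{n:N-fin} and 1.\ref{n:N-inf} that $\Phi^A_e$ is total iff $\eta_e^\frown\infty \prec \delta_\omega$. The only cosmetic difference is that the paper jumps directly to $\text{Tot}^A \leq_T \delta_\omega \leq_T \emptyset''$, whereas you route through the $S_e$ sets of the $N$-module and explicitly observe the $\emptyset''$-uniformity of extracting $\delta_\omega$ --- a bit more verbose, but filling in the same step the paper leaves implicit.
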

\begin{proof}
    Let $e\in\omega$. We show that all $P_e$ and $N_e$ are satisfied. Let $\rho=\delta_\omega(2e+1)$, which works for $P_e$. If $\rho^\frown\texttt{fin} \prec\delta_\omega$, then from Lemmas~\ref{lemma:delta-omega} and Main Lemma 1.\ref{n:P-fin}, if $\lim_s \psi^e_s(y_\rho)$ exists, $P_e$ will be satisfied with witness $y_\rho$. If the limit does not exist, then $P_e$ will be vacuously true. On the other hand, if $\rho^\frown\infty \prec\delta_\omega$, then from Lemmas~\ref{lemma:delta-omega} and Main Lemma 1.\ref{n:P-inf}, $\lim_s \psi^e_s(y_\rho)$ cannot exist, so $P_e$ will be vacuously true.\\
    
    \noindent From Lemma~\ref{lemma:delta-omega} and Main Lemmas 1.\ref{n:N-fin} and 1.\ref{n:N-inf}, $\forall e\in\omega$,
    \[\Phi^A_e \text{ is total} \iff \delta_\omega(2e) =\infty.\]
    Then since $\delta_\omega \leq_T \emptyset''$, we get $\text{Tot}^A \leq_T \emptyset''$, so $A$ is $\low_2$.
\end{proof}

\vspace{2em}
We prove the \ref{n:N-inf} claims of Main Lemma 1 by simultaneous induction on $n$. In each claim for $\delta\in\Lambda$, always assume that we are working in $\delta$-stages after higher priority requirements or sub-requirements have ``stabilized''. These stages exist from induction hypothesis.\\

\noindent Formally, \emph{requirement $\delta\in\Lambda$ has stabilized at stage $s$} if no $\delta'\preceq \delta$ ever gets initialized at or after stage $s$. Such stages exist from induction hypothesis on Main Lemma 1.\ref{n:P-init} and 1.\ref{n:N-init}. \emph{Sub-requirement $\rho(k)$ has stabilized at stage $s$} if for all $\langle \rho',k'\rangle \leq \langle \rho,k\rangle$, if node $\rho'\in\Lambda$ is ever selected $k'$-times or less since the beginning of the construction, then these selections have already been made by stage $s$. \emph{Sub-requirement $\eta(x)$ has stabilized at stage $s$} if $\eta$ has stabilized at stage $s$, and computations $\Phi^A_\eta\restriction (x+1)$ are never injured again at or after stage $s$. Such stages exist from induction hypothesis on Main Lemma 1.\ref{n:N-inf}.

\begin{claim*}[Main Lemma 1.\ref{n:P-init}]
    If $\rho\prec \delta_\omega$, then $\rho$ eventually stops being initialized.
\end{claim*}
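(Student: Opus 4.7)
\noindent The plan is to classify every initialization of $\rho$ as arising from one of two clauses in the construction and to bound each.  Type (a) is the clause ``initialize all nodes to the right of $\delta_{s,e+1}$'' that appears at each step of stage $s$; Type (b) is the clause in the $(\rho'^\frown\infty)$-strategy that initializes all $\delta\succeq\rho'$ and $\delta\geq_L\rho'$ whenever the selected node $\rho'\preceq\rho$ is not allowed to act via enumeration.\\

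\noindent For Type (a), since $\rho\prec\delta_\omega$, the true-path definition guarantees that every $\tau<_L\rho$ is accessible at only finitely many stages.  Because $\rho$ lies to the right of $\delta_{s,e+1}$ precisely when some prefix of $\delta_{s,e+1}$ of length at most $|\rho|$ lies strictly left of $\rho$, and there are only finitely many such candidate prefixes in the finitely-branching tree $\Lambda$, after some stage no prefix of $\delta_s$ ever lies strictly left of $\rho$, and so only finitely many Type (a) initializations occur.\\

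\noindent For Type (b), I would separate the strict prefixes $\rho'\prec\rho$ from the case $\rho'=\rho$.  When $\rho'\prec\rho$, the induction hypothesis on Main Lemma 1.\ref{n:P-init} applied to the shorter node $\rho'$ gives that $\rho'$ is initialized only finitely often; since every blocking of $\rho'$ places $\rho'$ itself into the initialized set $\{\delta:\delta\succeq\rho'\}$, such blockings are finite, and each contributes at most one initialization to $\rho$.  When $\rho'=\rho$, I split on the true-path outcome of $\rho$: if $\rho^\frown\texttt{fin}\prec\delta_\omega$, then $\rho^\frown\infty$ lies strictly left of the true path and hence is accessible only finitely often, so $\rho$ is selected only finitely often; if instead $\rho^\frown\infty\prec\delta_\omega$, I invoke the induction hypotheses on Main Lemma 1.\ref{n:N-init} and 1.\ref{n:N-inf} for every $\eta$ with $\eta^\frown\infty\preceq\rho$ to conclude that $\Phi^A_\eta$ is total and each sub-requirement $\eta(x)$ eventually stabilizes.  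I then fix a stage $s_0$ past which $\USE(\Phi^A_\eta(x)[s])$ has settled for every such $\eta$ and every $x\leq\langle\rho,1\rangle$ (the finitely many $x$ with $\rho\notin\texttt{quota}(x)$).  Any use $\rho$ picks after $s_0$ is chosen large and hence exceeds these stabilized $\USE$-values, while for $x>\langle\rho,1\rangle$ the relation $\rho\in\texttt{quota}(x)$ automatically allows enumeration; thus past $s_0$, enumeration is always permitted and no further Type (b) initializations of $\rho$ arise.\\

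\noindent The main obstacle is this last step: arguing that $\rho$ actually does pick a fresh (hence large) use after $s_0$ before initializations cease.  The idea is that while $\rho$ still carries an old small use from before $s_0$ and attempts to enumerate it, $\rho$ will be blocked and thereby initialized, but each such initialization destroys the current use, forcing the next selection of $\rho$ to begin with a ``pick use'' action that chooses a new large value.  Only finitely many further initializations can happen before $\rho$'s use becomes permanently large enough to guarantee allowance, and $\rho$ then has a final follower $y_\rho$ as asserted.
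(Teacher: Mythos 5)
Your proof is correct and follows essentially the same route as the paper's: both reduce the problem to the finitely many $\eta(x)$ with $\eta^\frown\infty\preceq\rho$ and $\rho\notin\texttt{quota}(x)$, appeal to the induction hypotheses (on 1.\ref{n:P-init}, 1.\ref{n:N-init}, 1.\ref{n:N-inf}) to get these to stabilize, and then note that after at most one more initialization $\rho$ picks a fresh large use exceeding all the settled $\USE$-values, so enumeration is always permitted thereafter. Your Type (a)/Type (b) decomposition and the separate treatment of $\rho'\prec\rho$ versus $\rho'=\rho$ simply spell out the bookkeeping that the paper compresses into its opening ``Wait for all $\delta\prec\rho$ to stabilize. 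We can assume $\rho^\frown\infty\prec\delta_\omega$.''
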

\begin{proof}
    Wait for all $\delta\prec \rho$ to stabilize. We can assume $\rho^\frown\infty \prec \delta_\omega$ otherwise $\rho$ will never be initialized again. From the $(\eta^\frown\infty)$-strategy, if $\eta(x)$ initializes $\rho$, then $\eta(x)$ must have satisfied $\eta^\frown\infty \preceq \rho$ and $\rho \not\in \texttt{quota}(x)$. There are only finitely many such $\eta(x)$, so wait for all of them to stabilize, which will eventually occur from induction hypothesis. Now these $\eta(x)$ will initialize $\rho$ when $\rho$ wants to act via enumeration but the use of $\rho$ does not exceed the use of $\eta(x)$. But after stabilizing, and after possibly one more initialization from $\eta(x)$, $\rho$ will always pick a use that exceeds $\USE(\eta(x))$. Then $\eta(x)$ will forever allow $\rho$ to enumerate use and therefore never initialize $\rho$ again.\\
\end{proof}

\begin{claim*}[Main Lemma 1.\ref{n:P-fin}]
    If $\rho^\frown\texttt{fin} \prec \delta_\omega$, then if $\lim_s \psi^\rho_s(y_\rho)$ exists the limit will not equal $\chi_{\Gamma^A} (y_\rho)$.
\end{claim*}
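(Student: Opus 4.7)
The plan is to exploit the duality built into $\rho$'s outcome: outcome $\infty$ fires exactly when $\psi^\rho_s(y_\rho)$ agrees with the stage-$s$ value of $\chi_{\Gamma^A}(y_\rho)$ (either both are $0$, via $\psi^\rho_s(y_\rho) = 0$ and $\rho$ having no use, or both are $1$, via $\psi^\rho_s(y_\rho) = 1$ and $\rho$ having an assigned use), while outcome $\texttt{fin}$ corresponds to disagreement. Since $\rho^\frown\texttt{fin} \prec \delta_\omega$, disagreement holds at cofinitely many $\rho$-stages, and I lift this stagewise disagreement to a disagreement of limits. By Main Lemma 1.\ref{n:P-init} fix $s_0$ past which $\rho$ has its final follower $y_\rho$ and is never again initialized; since $\rho^\frown\infty <_L \rho^\frown\texttt{fin} \prec \delta_\omega$, the node $\rho^\frown\infty$ is accessed only finitely often, so past some $s_1 \geq s_0$ every $\rho$-stage carries outcome $\texttt{fin}$. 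Assuming $v := \lim_s \psi^\rho_s(y_\rho)$ exists, the claim is immediate when $v \notin \{0,1\}$ since $\chi_{\Gamma^A}(y_\rho) \in \{0,1\}$, so I take $v \in \{0,1\}$ and pick $s_2 \geq s_1$ past which $\psi^\rho_s(y_\rho) = v$.

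If $v = 0$, negating the first $\infty$-condition forces $\rho$ to have an assigned use at every $\rho$-stage $s \geq s_2$; since $\rho$ is neither re-initialized nor takes outcome $\infty$ after $s_2$, this use $u$ is fixed and $\rho$ never itself enumerates it. Because $A$ is c.e.\ and $u$ is fixed, $A \restriction u$ stabilizes at some $s_3 \geq s_2$. The $(\rho^\frown\texttt{fin})$-strategy, played at the infinitely many $\rho^\frown\texttt{fin}$-stages guaranteed by Lemma~\ref{lemma:delta-omega} together with $\rho^\frown\texttt{fin} \prec \delta_\omega$, redeclares $\Gamma^{A \restriction u}(y_\rho)[s] \downarrow$ against the current $A \restriction u$; any such declaration at a stage $s \geq s_3$ records the stable initial segment and so remains permanently valid, giving $y_\rho \in \Gamma^A$ and $\chi_{\Gamma^A}(y_\rho) = 1 \neq 0 = v$.

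If $v = 1$, the symmetric negation forces $\rho$ to have no assigned use at every $\rho$-stage $s \geq s_2$, and no fresh axiom for $y_\rho$ is declared after $s_2$ since the $\texttt{fin}$-strategy fires only when $\psi^\rho(y_\rho) = 0$. The main obstacle here is ruling out that some legacy axiom $\Gamma^{A \restriction u'}(y_\rho) \downarrow$ still applies. Because $y_\rho$ was picked large, it was picked only after $\rho$'s final initialization, so every such axiom was created during the final incarnation of $\rho$; within that incarnation the only mechanism by which $\rho$ can lose an assigned use is to enumerate it, so each such $u'$ was enumerated into $A$ and the recorded initial segment $\sigma'$ (which had $u' \notin \sigma'$ at the time of declaration) permanently disagrees with the current $A \restriction u'$. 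Hence $y_\rho \notin \Gamma^A$ and $\chi_{\Gamma^A}(y_\rho) = 0 \neq 1 = v$.
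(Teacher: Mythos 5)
Your argument is correct and follows the same underlying idea as the paper's one-line proof (that outcome $\infty$ fires precisely upon stagewise agreement between $\psi^\rho(y_\rho)$ and the current value of $\chi_{\Gamma^A}(y_\rho)$, so $\rho^\frown\texttt{fin}\prec\delta_\omega$ forces eventual disagreement). You carry out in full the verification that the paper leaves implicit, namely that the stagewise disagreement encoded by the $\texttt{fin}$ outcome really lifts to a disagreement of the limits: in the $v=0$ case that the repeatedly re-declared axiom with the fixed use $u$ eventually records a stable $A\restriction u$, and in the $v=1$ case that no fresh axiom is declared and every legacy axiom from the final incarnation had its use enumerated into $A$ and so is void.
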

\begin{proof}
    From construction, $\rho$ will choose outcome $\infty$ the moment $\psi^\rho(y_\rho)$ agrees with $\chi_{\Gamma^A}(y_\rho)$.
\end{proof}

\begin{claim*}[Main Lemma 1.\ref{n:P-inf}]
    If $\rho^\frown\infty \prec \delta_\omega$, then $\rho$ will act infinitely often. Thus $\lim_s \psi^\rho_s(y_\rho)$ does not exist.
\end{claim*}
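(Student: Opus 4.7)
The plan is to show $\rho$ is both allowed and selected to act at cofinally many of the stages where it wants to; this forces infinitely many actions, and since actions alternate between ``picking use'' (requiring $\psi^\rho_s(y_\rho)=0$) and ``enumerating use'' (requiring $\psi^\rho_s(y_\rho)=1$), the value $\psi^\rho_s(y_\rho)$ must take both $0$ and $1$ cofinally, so its limit cannot exist. First I would fix, via Main Lemma~1.\ref{n:P-init}, a stage $s^*$ after which $\rho$ is never initialized, with final follower $y_\rho$; the hypothesis $\rho^\frown\infty \prec \delta_\omega$ then yields infinitely many stages where $\rho$ wants to act.

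For the ``allowed'' step, fix $\eta$ with $\eta^\frown\infty \preceq \rho$. Induction hypothesis Main Lemma~1.\ref{n:N-inf} gives $\Phi^A_\eta$ total, so each $\Phi^A_\eta(x)$ has a final value with final use. For each fixed action count $k$, only finitely many $\eta(x)$ can block $\rho$: enumeration-blockers satisfy $x \leq \langle \rho, 1\rangle$, and picking-blockers for the $(k{+}1)$-st use satisfy $x \leq \langle \rho, k{+}1\rangle$. For an enumeration-blocker it suffices that $\rho$'s current ``large'' use exceeds the stable $\USE(\Phi^A_\eta(x))$. For a picking-blocker it suffices that $\Phi^A_\eta(x)$ is $\rho$-correct, i.e., every higher-priority $\rho' \prec \rho$ with $\rho'^\frown\infty \preceq \rho$ (the $\rho'=\rho$ case being vacuous, as $\rho$ has no use when picking) either holds no current use or holds a use exceeding $\USE(\Phi^A_\eta(x))$. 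For this I appeal to induction hypothesis Main Lemma~1.\ref{n:P-inf} applied to $\rho'$: each such $\rho'$ itself acts infinitely often, so once $\rho'$ picks a fresh large use after $\Phi^A_\eta(x)$ has settled, every subsequent use of $\rho'$ dominates $\USE(\Phi^A_\eta(x))$.

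For the ``selected'' step, I would argue by contradiction: suppose $\rho$'s total selection count stabilizes at $k^*$. There are only $\langle \rho, k^*\rangle$ pairs $(\rho', k')$ with $\langle \rho', k'\rangle < \langle \rho, k^*\rangle$, and each such pair can be selected at most once, since a selection transitions $(\rho', k')$ to $(\rho', k'{+}1)$. Hence only finitely many stages ever witness a selection of value below $\langle \rho, k^*\rangle$. Past these stages, whenever $\rho \in \Theta(s)$ with current count $k^*$---which happens at infinitely many stages by the ``allowed'' step together with $\rho^\frown\infty \prec \delta_\omega$---no $\rho' \in \Theta(s)$ can have $\langle \rho', k_{\rho'}(s)\rangle < \langle \rho, k^*\rangle$, so $\rho$ is the argmin and is selected, contradicting stabilization at $k^*$.

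Combining the two steps, $\rho$ acts infinitely often, whence $\lim_s \psi^\rho_s(y_\rho)$ does not exist. I expect the main obstacle to be verifying $\rho$-correctness in the ``allowed'' step: one must track that every higher-priority $\rho'$, despite itself acting infinitely often by induction, always either holds no use or a sufficiently large use at the relevant stages, using that fresh uses picked after the settling of $\Phi^A_\eta(x)$ dominate $\USE(\Phi^A_\eta(x))$.
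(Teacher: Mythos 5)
Your proof is correct and follows essentially the same route as the paper's: both rest on the observations that (i) only finitely many $\eta(x)$ can block $\rho$ (because $\rho$ lies in $\texttt{quota}(x)$ for all large $x$), (ii) $\rho$-correctness of the remaining $\eta(x)$ follows from the inductive hypothesis Main Lemma~1.\ref{n:P-inf} applied to the $\rho'$ with $\rho'^\frown\infty\preceq\rho$, and (iii) a priority argument on the pairing values $\langle\rho',k'\rangle$ guaranteeing $\rho$ is eventually the argmin. The only organizational difference is that you split the argument into an ``allowed'' step and a ``selected'' step, whereas the paper argues by contradiction on $\rho$'s action count and distinguishes whether the next blocked action would be a pick or an enumeration (observing that a blocked enumeration after stabilization would force an initialization of $\rho$, which cannot happen); these are two phrasings of the same underlying argument.
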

\begin{proof}
    Assume for contradiction that $\rho$ only gets to act $(k-1)$-times during the construction, which implies that $\rho$ was selected $\geq (k-1)$-times. Wait for $\rho$ and $\rho(k)$ to stabilize. Then $\rho$ will always have the highest priority to act.\\
    
    \noindent First consider the case where $\rho$'s $k$-th action is to pick a use. From the $(\eta^\frown\infty)$-strategy, if $\eta(x)$ disallows $\rho$ to pick use, then $\eta^\frown\infty \preceq \rho$ and $\rho$ has exhausted its quota from $x$. But since $\rho$ never acts more than $(k-1)$-times, there are only finitely many $\eta(x)$ satisfying these two conditions. Wait for all such $\eta(x)$ to stabilize. We show that $\eta(x)$ is eventually $\rho$-correct, so that $\rho$ will be allowed by $\eta(x)$ to pick use: Let $\rho'$ be such that $\eta(x)^\frown\infty \preceq \rho'^\frown\infty \preceq \rho$. We need to show that the uses of $\rho'$ eventually exceed $\USE(\eta(x))$. But this is true because $\rho'$ gets to act infinitely often from induction hypothesis.\\
    
    \noindent So it must be that $\rho$'s $k$-th action was to enumerate use. Now from construction, if $\rho$ has highest priority to act and $\rho$ wants to act via enumeration, then $\rho$ will be selected, even if some $\eta^\frown\infty \preceq \rho$ does not allow $\rho$ to enumerate use. And should such $\eta$ exist, then $\rho$ will be initialized. But $\rho$ has already stabilized, a contradiction.
\end{proof}

\begin{claim*}[Main Lemma 1.\ref{n:N-init}]
    If $\eta\prec \delta_\omega$, then $\eta$ eventually stops being initialized.
\end{claim*}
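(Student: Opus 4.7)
The plan is to enumerate the sources of initialization of $\eta$ from the construction and show each one eventually stops firing. Reading the stage-$s$ construction, $\eta$ can be initialized in exactly two ways: (a) during the outcome-determination loop, when some $\delta_{s,e+1}$ moves strictly to the left of $\eta\restriction(e+1)$, so that $\eta$ is among the nodes ``to the right of $\delta_{s,e+1}$''; or (b) within the $(\rho^\frown\infty)$-strategy of a selected $\rho$, when $\rho$ is disallowed from acting via enumeration and all $\delta\succeq\rho$ (together with all $\delta\geq_L\rho$) are initialized.

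First I would dispose of case (a) using the true-path hypothesis. Every case-(a) initialization of $\eta$ forces access, at the triggering step, to some $\eta\restriction i\,^\frown\infty$ with $i<|\eta|$ and $\eta(i)=\texttt{fin}$; each such node lies strictly to the left of $\eta$, so by $\eta\prec\delta_\omega$ it is accessed only finitely often. Since at most $|\eta|$ indices $i$ qualify, the maximum of these finitely many last-access stages gives a bound beyond which case (a) no longer fires.

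For case (b) I would apply the induction hypothesis on Main Lemma~1.\ref{n:P-init} to each of the finitely many $\rho\prec\eta$, obtaining a stage past which no such $\rho$ is ever initialized again. The key observation, and the only genuine content of the argument, is that a disallowance-from-enumeration event for $\rho$ initializes $\rho$ itself (since $\rho\succeq\rho$ is among the set of initialized nodes). So after this stabilization no $\rho\prec\eta$ can ever again be selected and disallowed from enumerating, and hence no such $\rho$ initializes $\eta$. The remaining positive nodes $\rho'$ either extend $\eta$ or sit strictly to the right of $\eta$, in which cases $\eta$ is neither $\succeq\rho'$ nor $\geq_L\rho'$ and hence untouched; or they sit strictly to the left of $\eta$, but being selected requires $\rho'\preceq\delta_s$, forcing $\delta_s<_L\eta$, which is subsumed under case (a). Combining the two bounds yields the desired stage beyond which $\eta$ is never initialized. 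The main obstacle here is bookkeeping rather than an idea, and is cleanly handled by the induction hypothesis together with the two-case split above.
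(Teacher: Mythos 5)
Your proposal is correct and follows the same route as the paper, which simply waits for all $\rho\prec\eta$ to stabilize (via the induction hypothesis on Main Lemma 1.\ref{n:P-init}) and notes that $\eta$ is then never initialized again; your two-case split (leftward movement of the accessible path versus a disallowed enumeration by some $\rho\prec\eta$) just makes explicit the bookkeeping the paper leaves implicit. No gap.
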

\begin{proof}
    Wait for all $\rho\prec\eta$ to stabilize. Then $\eta$ will never be initialized again.
\end{proof}

\begin{claim*}[Main Lemma 1.\ref{n:N-fin}]
    If $\eta^\frown\texttt{fin} \prec \delta_\omega$, then $\Phi^A_\eta$ is not total.
\end{claim*}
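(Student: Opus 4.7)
The plan is to prove the contrapositive: assuming $\Phi^A_\eta$ is total, I will show that infinitely many $\eta$-stages are $\eta$-expansionary, so that $\eta$'s outcome is $\infty$ cofinally often, contradicting $\eta^\frown\texttt{fin} \prec \delta_\omega$.

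To set up the argument, I first invoke Main Lemma 1.\ref{n:N-init} to wait for $\eta$ to stabilize, and then fix an arbitrary $x \in \omega$. The goal is to produce a stage beyond which every $\eta$-stage $s$ satisfies $l_s(\eta) > x$. Since $x$ is arbitrary, $l_s(\eta)$ then grows without bound, producing infinitely many $\eta$-expansionary stages.

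Two ingredients are needed. The first is stabilization of the computations: since $\Phi^A_\eta$ is total, for every $y \leq x$ the computation $\Phi^A_\eta(y)$ converges with some use $u_y$; setting $u = \max_{y \leq x} u_y$ and using that $A$ is c.e., once $A \restriction (u+1)$ is final the approximations $\Phi^A_\eta(y)[s]$ are correctly computed. The second ingredient is $\eta$-correctness, namely that at stage $s$ every $\rho$ with $\rho^\frown\infty \preceq \eta$ has its assigned use (if any) exceeding $u$. By the induction hypothesis (Main Lemma 1.\ref{n:P-inf}), each such $\rho$ acts infinitely often; since every act of picking chooses a large fresh number and every act of enumeration destroys the current use, the sequence of uses assigned to $\rho$ over the construction is unbounded. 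Because there are only finitely many such $\rho \prec \eta$, I can choose a single stage past which all their uses simultaneously exceed $u$.

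Combining the two ingredients at the maximum of the two bounding stages yields $l_s(\eta) > x$ at every subsequent $\eta$-stage, which finishes the argument. The main obstacle is the second ingredient: one must be sure that the uses assigned to each $\rho^\frown\infty \preceq \eta$ genuinely tend to infinity and do not stall between a picking action and the ensuing enumerating action. This is resolved by noting that after every enumeration $\rho$ is considered to have no assigned use, so infinite-often activity forces infinitely many fresh large picks, and hence unbounded growth of the uses.
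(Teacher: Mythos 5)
Your proposal is correct and is essentially the same argument as the paper's: both reduce the claim to showing that, for any fixed $x$, the computations $\Phi^A_\eta\restriction(x+1)$ eventually become (and remain) $\eta$-correct, and both obtain this from Main Lemma 1.\ref{n:P-inf} applied to each $\rho$ with $\rho^\frown\infty\preceq\eta$, using that those nodes act infinitely often and always pick fresh large uses. Your write-up is somewhat more explicit about the two stabilization steps and about why the sequence of uses genuinely tends to infinity, but the route and the key lemma invoked are identical to the paper's.
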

\begin{proof}
    Assume for contradiction $\Phi^A_\eta$ is total. We claim that given arbitrary $x$, computation $\Phi^A_\eta(x)$ is eventually $\eta$-correct. Equivalently, we are claiming that if $u$ is a use assigned to $\rho$ where $\rho^\frown\infty \prec \eta$, and $u <\USE(\Phi^A_\eta(x))$, then $u$ must eventually be enumerated into $A$. This claim is induction hypothesis with Main Lemma 1.\ref{n:P-inf}. Then $\eta^\frown\infty \prec \delta_\omega$, a contradiction.
\end{proof}

\begin{claim*}[Main Lemma 1.\ref{n:N-inf}]
    If $\eta^\frown\infty \prec \delta_\omega$, then $\Phi^A_\eta$ is total.
\end{claim*}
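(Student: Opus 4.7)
Fix $x \in \omega$. The plan is to show $\Phi^A_\eta(x)\downarrow$ by arguing that the computation is injured only finitely often after some stage; since $\eta^\frown\infty \prec \delta_\omega$ forces $l_s(\eta)$ to be unbounded, infinitely many stages witness $\Phi^A_\eta(x)[s]\downarrow$ with $\eta$-correctness, and a finite injury count then pins down a stable convergent value.

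By the induction hypotheses 1.\ref{n:P-init} and 1.\ref{n:N-init} applied to all $\delta \prec \eta$, and by 1.\ref{n:N-inf} applied to every higher-priority $\eta' \prec \eta$, I would fix a stage $s_0$ after which $\eta$ and every $\delta \prec \eta$ have permanently stabilized and after which the sub-requirements $\eta'(y)$ of every such $\eta'$ are no longer injured, for the relevant finite range of $y$. I then classify the potential injurers of $\eta(x)$ — nodes $\rho$ that at some $s > s_0$ enumerate into $A$ some $u \leq \USE(\Phi^A_\eta(x)[s])$. Nodes strictly left of $\eta^\frown\infty$ contribute only finite injury since they are accessed finitely often. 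Nodes $\rho \prec \eta$ with $\rho^\frown\infty \preceq \eta$, although acting infinitely often by 1.\ref{n:P-inf}, pick each fresh use ``large'' and are constrained by the now-stabilised higher $\eta'$'s; an induction on priority shows their uses eventually remain above $\USE(\Phi^A_\eta(x))$. For $\rho \succeq \eta^\frown\infty$ the enumerate-use rule is decisive: $\eta(x)$ permits $\rho$ to enumerate only if its use already exceeds $\USE(\Phi^A_\eta(x)[s])$ or $\rho \in \texttt{quota}(x)$. The first disjunct yields no injury, so only the finite set $\mathcal R = \{\rho \succeq \eta^\frown\infty : \rho \in \texttt{quota}(x)\}$ can further injure $\eta(x)$.

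The main obstacle is capping injuries from $\mathcal R$. For each $\rho \in \mathcal R$ with $Q = \texttt{quotaFor}_x(\rho)$, its first $Q$ picks are made under quota and account for at most $Q$ injuries; every subsequent pick requires $\eta(x)$ to be $\rho$-correct, so the picked $u$ exceeds $\USE(\Phi^A_\eta(x)[t])$ at pick-time $t$, and a later injurious enumerate-time $s$ forces $\USE$ to have grown between $t$ and $s$ — hence a prior injury by another member of $\mathcal R$. I would close the count by a careful induction on tree priority inside $\mathcal R$, using the monotonicity of ``large'' picks (each new pick strictly exceeds every previously seen number) and the fact that at any moment each $\rho$ can have at most one active pick--enumerate cycle in progress, to cap the cascade. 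Once total injury to $\eta(x)$ is thereby bounded, $\USE(\Phi^A_\eta(x)[s])$ stabilizes, and the infinitely many $\eta$-expansionary stages then force $\Phi^A_\eta(x)$ to converge.
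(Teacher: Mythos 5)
Your overall strategy coincides with the paper's: restrict attention to the finite set $\{\rho \succeq \eta^\frown\infty :\; \rho \in \texttt{quota}(x)\}$ of possible injurers (the paper's Fact~\ref{fact:inj-set}), charge each such $\rho$ at most $\texttt{quotaFor}_x(\rho)$ injuries while it is still under quota, observe that any later injurious enumeration by $\rho$ must have been ``triggered'' by an intervening injury from some other node (since $\rho$ picks post-quota uses only at $\rho$-correct stages), and then close off the cascade by induction over this finite set. The first three steps are correct and match the paper's argument.

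The gap is in the closing step. Knowing that every post-quota injury by $\rho$ is preceded by an injury from ``another member of $\mathcal R$'' does not by itself terminate the cascade: two nodes could in principle trigger each other alternately forever, and an ``induction on tree priority'' in the usual direction (shorter, more leftward nodes first) runs the wrong way, because the base case must be the nodes with \emph{no} injurious extensions. What is needed, and what the paper establishes in Claim~\ref{claim:edge}, is that the triggering node $\rho'$ must properly extend $\rho^\frown\infty$. That requires a positional case analysis: $\rho' \not\preceq \rho$ because $\eta(x)$ was $\rho$-correct when $\rho$ picked its use; $\rho' \not<_L \rho^\frown\infty$ because accessing $\rho'$ would have initialized $\rho$ and destroyed the offending use; $\rho' \not>_L \rho$ because $\rho$'s action initialized $\rho'$, forcing $\rho'$'s later uses above $\USE(\Phi^A_\eta(x))$ so that $\rho'$ cannot be the first injurer; hence $\rho' \succeq \rho^\frown\infty$. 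Only with this does the recursion become well-founded: the paper inducts on ``edge layer,'' i.e.\ from the outermost (longest) quota nodes inward, with base case the edge nodes of Definition~\ref{def:edge}, each of which injures $\eta(x)$ at most $\texttt{quotaFor}_x(\rho)+1$ more times after becoming an edge node. Your cited ingredients (largeness of fresh picks, one pick--enumerate cycle at a time) are pieces of this case analysis but do not by themselves locate the trigger, so as written the cascade bound is not established.
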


\noindent This claim is the heart of the argument on why the construction works. Always assume we are working in the $(\eta^\frown\infty)$-stages after $\eta$ stabilizes. The proof idea is that given $x\in\omega$, the moment $\Phi^A_\eta \restriction (x+1) \downarrow$, the only $\rho\in\Lambda$ that can injure $\eta(x)$ are those that lie in the quota of $x$. But there are only finitely many such $\rho$, and we will show that each of them will eventually stop injuring $\eta(x)$:\\

\noindent The first $\rho$ to stop injuring $\eta(x)$ are those that know the outcomes of higher priority injurious nodes. That is, $\rho$ can injure $\eta(x)$, but no $\rho'\succeq\rho^\frown\infty$ can injure $\eta(x)$ again. We call such nodes \emph{edge nodes}, for they can be seen as the outer most layer of nodes that can injure $\eta$. Wait for edge nodes $\rho$ to exhaust their quota from $x$. Then $\rho$ can act only when $\eta(x)$ is $\rho$-correct; that is, when $\rho$ believes the action will not injure $\eta(x)$. But having no extension that can injure $\eta(x)$, $\rho$ has complete knowledge of the outcomes of all relevant injurious nodes. In particular, when $\rho$ believes an action will never injure $\eta(x)$, that action will indeed never injure $\eta(x)$.\\

\noindent Therefore after exhausting quota, edge nodes can never injure $\eta(x)$, and can be removed from the set of nodes that are injurious to $\eta(x)$. Peeling away this outer layer of nonthreatening nodes exposes an inner layer of nodes, which will become the new edge nodes. This inner layer nodes will also eventually exhaust quota and stop injuring $\eta(x)$. Peeling away the inner layer nodes exposes an even inner layer of new edge nodes. Repeating this process, since only finitely many nodes can injure $\eta(x)$, eventually all layers of these nodes will be peeled away until no nodes are left to injure $\eta(x)$, completing the proof.\\

Formally, let $x\in\omega$ and $\eta^\frown\infty \prec \delta_\omega$, and we will show that $\Phi^A_\eta(x) \downarrow$. Work only in $(\eta^\frown\infty)$-stages after $\eta$ stabilizes. The following fact follows directly from the $(\eta^\frown\infty)$-strategy of the construction:

\begin{fact} \label{fact:inj-set}
    For all $s \in\omega$ and $\eta\in\Lambda$,
    \[\{\rho: \rho \text{ will injure } \eta(x) \text{ at an } (\eta^\frown\infty)\text{-stage}\} \subseteq \{\rho:\; \rho \in \texttt{quota}(x)\},\]
    Note that the larger set is finite, computable, and independent from $\eta$.
\end{fact}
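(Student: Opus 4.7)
The plan is to unpack the operational meaning of ``$\rho$ will injure $\eta(x)$ at an $(\eta^\frown\infty)$-stage'' inside the framed construction and then read off the inclusion directly from the allowance clauses of the $(\eta^\frown\infty)$- and $(\rho^\frown\infty)$-strategies. The whole argument is a short case analysis on where $\rho$ sits in $\Lambda$ relative to $\eta$.

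First I would fix an $(\eta^\frown\infty)$-stage $s$ at which $\rho$ enumerates a use $u$ with $u \leq \USE(\Phi^A_\eta(x)[s])$. Since enumerations are performed only inside a $(\rho^\frown\infty)$-strategy that is selected in the framed construction, the enumerating $\rho$ must lie in $\Theta$, and in particular $\rho^\frown\infty \preceq \delta_s$. Combined with the hypothesis $\eta^\frown\infty \preceq \delta_s$, this makes $\rho^\frown\infty$ and $\eta^\frown\infty$ comparable along the node $\delta_s$.

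Next I would rule out the case $\rho \preceq \eta$. If $\rho^\frown\infty \preceq \eta$, then the $\eta$-correctness of $\Phi^A_\eta(x)[s]$ that is built into the definition of $l_s(\eta)$ forces $\rho$'s use at stage $s$ to exceed $\USE(\Phi^A_\eta(x)[s])$, contradicting the injury hypothesis on $u$. If instead $\rho^\frown\texttt{fin} \preceq \eta$, then $\rho^\frown\texttt{fin}$ and $\rho^\frown\infty$ are both initial segments of $\delta_s$, which is impossible. Hence $\eta^\frown\infty \preceq \rho$, so by the $(\rho^\frown\infty)$-strategy $\rho$ needed $\eta(x)$'s permission to enumerate. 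The ``$\eta(x)$ allows $\rho$ to act via enumerating use'' clause of the $(\eta^\frown\infty)$-strategy offers only two avenues: either $u > \USE(\Phi^A_\eta(x)[s])$, which is ruled out, or $\rho \in \texttt{quota}(x)$. Thus $\rho \in \texttt{quota}(x)$, giving the inclusion.

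For the parenthetical remark, I would simply unfold $\texttt{quota}(x)$ from (\ref{eq:quota}): the membership ``$\rho \in \texttt{quota}(x)$'' is equivalent to ``$\rho(1) \in \texttt{quota}(x)$'', i.e.\ $(\exists k' \geq 1)\,[\langle \rho, k'\rangle < x]$, which is a bound on the code of $\rho$ alone. This bound yields finiteness, decidability of membership yields computability, and the defining formula mentions neither $\eta$ nor any dynamic data of the construction, giving independence from $\eta$. There is no real obstacle here; the only moving part is the elimination of the $\rho \prec \eta$ subcase, and this is exactly what the $\eta$-correctness condition in the definition of $l_s(\eta)$ was engineered to supply.
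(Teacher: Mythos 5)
Your proposal is correct and follows essentially the paper's own route: the paper treats this Fact as immediate from the $(\eta^\frown\infty)$-strategy (an enumeration injuring $\eta(x)$ can only come from a selected $\rho$ with $\eta^\frown\infty\preceq\rho$ that was granted permission, and permission without a use exceeding $\USE(\Phi^A_\eta(x)[s])$ forces $\rho\in\texttt{quota}(x)$), and your case analysis merely makes this explicit, dismissing $\rho\preceq\eta$ via the $\eta$-correctness built into $l_s(\eta)$ exactly as intended. The finiteness, computability, and independence from $\eta$ are likewise read off from Eq.~(\ref{eq:quota}) in both arguments.
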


\begin{definition} \label{def:edge}
    We call $\rho$ an edge node for $\eta(x)$ at stage $s$ if no $\rho'\succeq \rho^\frown\infty$ injures $\eta(x)$ at or after stage $s$.
\end{definition}

\begin{claim} \label{claim:edge}
    If $\rho\in\texttt{quota}(x)$ is an edge node for $\rho$ at stage $s$, the injury that $\rho$ can inflict on $\eta(x)$ after stage $s$ is bounded by $\texttt{quotaFor}_x(\rho)+1$.
\end{claim}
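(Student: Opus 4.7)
Proof proposal. Set $k := \texttt{quotaFor}_x(\rho)$. Each injury that $\rho$ can inflict on $\eta(x)$ is an enumeration into $A$ of a use $u$ assigned to $\rho$ with $u \leq \USE(\Phi^A_\eta(x))$ at the enumeration stage, and every enumeration is preceded by a pick that $\eta(x)$ had to allow. By the $(\eta^\frown\infty)$-strategy, a pick of $\rho$ is allowed either via ``$\rho$ has not yet exhausted its quota from $x$'' or via ``$\Phi^A_\eta(x)$ is $\rho$-correct''; the enumeration itself is always allowed, since $\rho\in\texttt{quota}(x)$. The plan is to classify each injurious enumeration after stage $s$ by the clause under which $\eta(x)$ allowed the preceding pick, and bound the two contributions separately.

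The quota-based clause can justify $\rho$'s pick only while $\rho$'s action count is strictly less than $k$, since $\rho(j)\in\texttt{quota}(x)$ iff $j\leq k$. Hence at most $k$ picks of $\rho$ across the entire construction are quota-based, each followed by at most one enumeration, giving at most $k$ injurious enumerations after $s$ from this source. This accounts for the ``$k$'' in the stated bound.

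The ``$+1$'' comes from the $\rho$-correct clause, and this is where the edge-node hypothesis enters. Whenever $\rho$ picks $u$ via the $\rho$-correct clause at a stage $t' > s$, the large-number convention makes $u$ exceed every number previously observed, including $\USE(\Phi^A_\eta(x)[t'])$. The subsequent enumeration of $u$ at stage $t$ can injure $\eta(x)$ only if $\Phi^A_\eta(x)$ has been reinjured and recomputed between $t'$ and $t$ with a new USE that rises up to $u$. By the edge-node hypothesis, no $\rho' \succeq \rho^\frown\infty$ is responsible for any such reinjury, so any culprit must lie outside that subtree. I plan to show that at most one such sabotage can occur after $s$: after the first one (if any), $\rho$'s next $\rho$-correct pick is again chosen strictly larger than the newly inflated USE, and since every use currently assigned to another node was picked strictly earlier---hence is smaller than this new $u$---no later reinjury can drive the USE of $\Phi^A_\eta(x)$ back up to $u$, closing the window for further $\rho$-correct injuries.

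The main obstacle is precisely this last combinatorial step. The edge-node hypothesis alone does not rule out sabotage through reinjury, and the argument must combine it with the monotonicity of ``large'' picks to collapse the $\rho$-correct contribution to the single promised injury. The delicate bookkeeping is tracking, at each intervening stage between a $\rho$-correct pick and its enumeration, which pending uses assigned to other nodes could still pose a threat to $\rho$'s current $u$; once that is organized the remainder should be routine.
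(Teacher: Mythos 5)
Your decomposition (quota-clause picks contribute at most $\texttt{quotaFor}_x(\rho)$ injuries, $\rho$-correct picks contribute the $+1$) matches the shape of the paper's bound, but the step you yourself flag as ``the main obstacle'' is exactly the step that is missing, and the route you sketch for it does not work. You argue that after one sabotage, any use $u'$ currently assigned to another node is smaller than $\rho$'s freshly picked $u$, so ``no later reinjury can drive the USE of $\Phi^A_\eta(x)$ back up to $u$.'' That inference is false: what lets $u'$ sabotage is precisely that $u'$ is small --- if $u'<\USE(\Phi^A_\eta(x))$ at the moment it is enumerated, the computation is destroyed and recomputed at a later stage with a brand-new use that can be arbitrarily large, in particular $\geq u$. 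Magnitude comparisons between $u'$ and $u$ give no control here, so nothing in your argument stops a second, third, \dots\ sabotage by nodes incomparable with $\rho$, and the ``at most one $\rho$-correct injury'' claim is unsupported.

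The ingredient you are missing is the tree discipline, not use monotonicity. The paper shows that a use picked at a $\rho$-correct stage (after quota exhaustion) can \emph{never} injure $\eta(x)$: if it did, there would be a \emph{first} node $\rho'$ injuring $\eta(x)$ between the pick stage $s_0$ and the injury stage $s_2$, and every possible position of $\rho'$ relative to $\rho$ is contradictory. $\rho'\preceq\rho$ is ruled out because $\rho$-correctness at $s_0$ forces $\rho'$'s use above $\USE(\Phi^A_\eta(x)[s_0])$; $\rho'\succeq\rho^\frown\infty$ is ruled out by the edge-node hypothesis (the only place you use it); $\rho'<_L\rho^\frown\infty$ is ruled out because accessing $\rho'$ would initialize $\rho$, destroying the very use that was to cause the injury at $s_2$; and $\rho'>_L\rho$ is ruled out because $\rho$ initialized $\rho'$ at $s_0$, so $\rho'$'s use is large and cannot be the first to destroy $\Phi^A_\eta(x)$. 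The left/right initialization cases are what exclude the ``culprits outside that subtree'' that your proposal leaves open; in the paper's accounting the $+1$ is then spent not on a sabotage but on the single use already pending when $\rho$ exhausts its quota (whose pick $\eta(x)$ may never have vetted at all --- a case your pick-clause classification also omits).
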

\begin{proof}
    Wait for $\rho$ to exhaust its quota from $x$ after stage $s$, inflicting no more than $\texttt{quotaFor}_x(\rho)$ injury on $\eta(x)$. We show that $\rho$ cannot injure $\eta(x)$ more than once: Then if $\rho$ has an assigned use, wait for that use to be enumerated, possibly injuring $\eta(x)$ one last time. If the use is never enumerated, then $\rho$ can never injure $\eta(x)$, and we are done. After enumerating use, assume that $\rho$ eventually wants to pick a new use, otherwise we are again done. Then since $\rho$ has exhausted its quota from $x$, the $(\eta^\frown\infty)$-strategy will only allow $\rho$ to pick a new use when $\eta(x)$ is $\rho$-correct.\\
    
    \noindent Assume for contradiction that $\rho$ injures $\eta(x)$ at stage $s_2$, via a use that was picked at stage $s_0<s_2$. That means between picking that use and injuring $\eta(x)$, there must have been a first $\rho'\succeq \eta^\frown\infty$ that injured $\eta(x)$, say at stage $s_1>s_0$, $s_1<s_2$. Now $\rho'\not\preceq \rho$ since $\eta(x)$ was $\rho$-correct when $\rho$ picked its new use. Yet $\rho'\not\succeq \rho$ because no $\rho'\succeq \rho$ can injure $\eta(x)$, by choice of $\rho$ being an edge node. Also, $\rho'\not<_L \rho^\frown\infty$ otherwise $\rho$ would be initialized at stage $s_1$ and cannot injure $\eta(x)$ later with the use that was picked at stage $s_0$. Finally, $\rho'\not>_L\rho$ because $\rho$ would have initialized $\rho'$ at stage $s_0$, implying that the use of $\rho'$ at stage $s_1$ must exceed the use of $\rho$ at stage $s_0$. So $\rho'$ cannot exist.
\end{proof}

\begin{definition}
    The edge layer of $\rho$ with respect to $x$ is defined as
    \[\max_{\substack{\rho'\succeq \rho^\frown\infty\\ \rho'\in \texttt{quota}(x)}}(|\{\rho'': \rho^\frown\infty \preceq \rho'' \preceq \rho'\}|).\]
\end{definition}

\noindent So the the outer most layer of nodes that lie in $\texttt{quota}(x)$ have edge layer 0, and inner layer nodes have higher edge layer number. In particular, if $\rho'\succ\rho$, then $\rho'$ has smaller edge layer than $\rho$.

\begin{proof}
    (Of Main Lemma 1.\ref{n:N-inf}): From Fact~\ref{fact:inj-set}, only nodes that lie in $\texttt{quota}(x)$ can injure $\eta(x)$. Let $\rho\in\texttt{quota}(x)$, and $r$ be the edge layer of $\rho$ with respect to $x$. We prove by induction on $r$ that $\rho$ eventually stops injuring $\eta(x)$. The base case is Claim~\ref{claim:edge}. From induction hypothesis, all nodes $\rho'\in\texttt{quota}(x)$ with edge layer $<r$ will eventually stop injuring $\eta(x)$. When that happens $\rho$ becomes an edge node for $\eta(x)$, and will also stop injuring $\eta(x)$ from Claim~\ref{claim:edge}. Since $|\texttt{quota}(x)|$ is finite, eventually all nodes stop injuring $\eta(x)$.
\end{proof}

\subsection{Nonlow, Array Computable} \label{sec:array-computable}

We show in this section that the nonlow $\low_2$ c.e.\ set constructed in Theorem~\ref{thm:nonlow-low2} is array computable. Array computability was introduced by Downey, Jockusch, and Stob~\cite{downey1990array}, after observing how sufficient fickleness was enough to satisfy many commonly encountered embeddability requirements. Array computable degrees are the least fickle c.e.\ degrees, and can be characterized as follows:

\begin{definition}
    A c.e.\ degree $\bm{d}$ is array computable~\cite{downey1996array} iff there exists a computable $m: \omega\to\omega$ such that every $A\in \bm{d}$ has an $m$-bounded computable approximation. That is, $A$ has a uniformly computable approximation $a_s(x) \equiv_T \emptyset$ such that $\forall x\in \omega$,
    \begin{itemize}
        \item $A(x) =\lim_s a_s(x)$, and
        \item $|\{s\in\omega:\; a_s(x) \neq a_{s+1}(x)\}| \leq m(x)$.
    \end{itemize}
\end{definition}

\noindent Array computability turns out to characterize the c.e.\ degrees with a strong minimal cover~\cite{ishmukhametov1999weak}, showing again the connection between fickleness and lattice structure. The set just constructed has the least possible fickleness:

\begin{theorem} \label{thm:nonlow-ac}
    The nonlow $\low_2$ c.e.\ $A$ constructed in Theorem~\ref{thm:nonlow-low2} is array computable.
\end{theorem}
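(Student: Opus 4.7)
The plan is to exploit the uniformity of the quota system in the construction---specifically, the fact that $\texttt{quota}(x)$ depends only on $x$ and not on $\eta$---to exhibit a single computable function $m : \omega \to \omega$ such that every set in the degree of $A$ admits an $m$-bounded approximation.

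First, $A$ itself has a $1$-bounded approximation via its c.e.\ enumeration $a_s(x) := A_s(x)$. For an arbitrary $B \equiv_T A$, write $B = \Phi^A_e$ and approximate $B$ by $b_s(x) := \Phi^{A_s}_e(x)$ (set to $0$ when divergent). The mind changes of this approximation at position $x$ are controlled by the number of enumerations into $A$ that land below the use of $\Phi^{A_s}_e(x)$, plus the single initial transition from divergent to convergent.

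The core of the argument reuses the bound from the proof of Main Lemma 1.\ref{n:N-inf}. By Claim~\ref{claim:edge} together with the layered induction on edge depth, the total number of times any $\Phi^A_\eta(x)$ can be injured is bounded by $M(x) := \sum_{\rho \in \texttt{quota}(x)} (\texttt{quotaFor}_x(\rho) + 1)$, a computable function of $x$ that is \emph{independent of $\eta$}. Taking $\eta = \eta_e$ to be the true-path node working for $N_e$, the injuries to $\Phi^A_{\eta_e}(x) = \Phi^A_e(x)$ after $\eta_e$ stabilizes are uniformly bounded in $e$. I plan to set $m(x) := 2M(x) + c$, where the factor $2$ accounts for transitions through divergence and $c$ absorbs the pre-stabilization contributions described below, thereby bounding the mind changes of $b_s(x)$ uniformly in $e$.

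The principal obstacle is that the quota argument only counts enumerations occurring at $\eta^\frown\infty$-stages after $\eta_e$ has stabilized, whereas mind changes of $b_s(x)$ arise from \emph{any} enumeration below the current use. The plan for handling leftover enumerations is to observe that they originate from $\rho$-nodes either lying to the left of $\eta_e$ (visited only finitely often) or initialized before $\eta_e$'s stabilization (whose picked uses are discarded upon initialization). Since uses are picked large, any element $\leq x$ that enters $A$ must have been picked as a use at a stage $\leq x$, so the number of such enumerations admits a computable bound in $x$ alone. Absorbing this bound into $c$ yields the single computable $m$, completing the proof that every $B \equiv_T A$ has an $m$-bounded computable approximation and hence that the degree of $A$ is array computable.
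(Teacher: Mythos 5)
Your overall strategy is the same as the paper's --- exploit the fact that $\texttt{quota}(x)$ is independent of $\eta$ to get a single computable $m(x)$ --- but there is a genuine gap in the central step. You assert that, by Claim~\ref{claim:edge} together with the layered edge-induction, the total injury on $\Phi^A_\eta(x)$ is bounded by $M(x) = \sum_{\rho\in\texttt{quota}(x)}(\texttt{quotaFor}_x(\rho)+1)$. This does not follow, and $M(x)$ is in fact too small. Claim~\ref{claim:edge} bounds a node $\rho$'s injury only \emph{after $\rho$ has become an edge node}; before that point, $\rho$ can keep injuring $\eta(x)$ whenever it is \emph{triggered} by an injury from some $\rho'\succeq\rho^\frown\infty$ of smaller edge layer --- $\rho$ picks a use believing $\eta(x)$ is $\rho$-correct, is wrong because some extension $\rho'$ injures $\eta(x)$ later, and then injures $\eta(x)$ itself when it eventually enumerates. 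These triggered injuries compound with edge-layer depth, which is why the paper has to introduce the trigger concept (Definition~\ref{def:trigger}, Claim~\ref{claim:trigger}) and the recursive bound $\texttt{InjPow}_\eta(x,\rho)\le\texttt{quotaFor}_x(\rho)+\sum_{\rho'\text{ smaller layer}}\texttt{InjPow}_\eta(x,\rho')$ of Claim~\ref{claim:edge-inductive}. The resulting bound is on the order of $x^2 4^{x^2}$, not a linear sum of quotas. The inductive proof of Main Lemma~1.\ref{n:N-inf} only establishes \emph{finiteness}, not a computable bound, so the computability has to be extracted from the trigger analysis, which your proposal omits entirely.

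A second, more minor issue is your choice of approximation $b_s(x) := \Phi^{A_s}_e(x)$ over all stages. Your fix for the pre-stabilization and off-true-path enumerations --- that ``any element $\leq x$ that enters $A$ must have been picked as a use at a stage $\leq x$'' --- is not to the point: the mind changes of $b_s(x)$ are controlled by enumerations below $\USE(\Phi^{A_s}_e(x))$, and that use can be arbitrarily large, so bounding enumerations $\le x$ does not bound the mind changes. The paper avoids this altogether by defining the approximation $a_{\eta,s}(x)$ to update only at $S_\eta$-stages (the $(\eta^\frown\infty)$-stages after $\eta$ stabilizes) after $t_\eta(x)$; for a fixed $\eta$ this approximation is computable, even though $S_\eta$ and $t_\eta$ are not uniformly computable, and the whole uniformity burden is then carried by the $\eta$-independence of the injury bound rather than by a separate absorption constant $c$.
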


We use notations from the proof of Theorem~\ref{thm:nonlow-low2}. Let $\eta^\frown\infty \prec \delta_\omega$. Work only in the $(\eta^\frown\infty)$-stages after $\eta$ has stabilized. Define
\begin{align}
    \label{eq:S} S_\eta &:=\{s\in\omega:\; s \text{ is } (\eta^\frown\infty)\text{-stage after } \eta \text{ has stabilized}\},\\
    \label{eq:t} t_\eta(x) &:=(\mu\; s\in S_\eta)\; \left[\Phi^A_\eta \restriction (x+1)[s] \downarrow \right],\\
    \label{eq:g} a_{\eta,s}(x) &:=\Phi^A_\eta(x)[s'], \quad \text{ where } s' =\mu(s'\in S_\eta)\; [s' \geq \max(s,t_\eta(x))].
\end{align}

\noindent Note that $a_{\eta,s}(x)$ is the canonical computable approximation of $\Phi^A_\eta(x)$ that we get from our construction. Also note that for a fixed $\eta$, $S_\eta$ and $t_\eta(x)$ are computable, but not uniformly computable. We need to define a computable $m:\omega\to\omega$ such that for every $\eta^\frown\infty \prec \delta_\omega$ and $x\in\omega$,
\[|\{s\in S_\eta, s\geq t_\eta(x):\; a_{\eta,s}(x) \neq a_{\eta,s+1}(x)\}| \leq m(x).\]

\noindent Equivalently, we want to computably bound the injury on $\eta(x)$ at the $S_\eta$-stages after stage $t_\eta(x)$, and also show that this bound is independent from $\eta$. Our proof outline follows the proof of Main Lemma 1.\ref{n:N-inf}, where we bounded injury on $\eta(x)$. Independence from $\eta$ follows directly from the fact that $\eta$'s quota for $x$ $\texttt{quota}_\eta(x)$ (Eq.~(\ref{eq:quota})) does not depend on $\eta$. Therefore our proof of computable boundedness for a given $\eta$ will also hold for arbitrary $\eta$.\\

\noindent Now we show that the injury bound in Main Lemma 1.\ref{n:N-inf} is computable as a function of $x$. Fact~\ref{fact:inj-set} gave a computable and finite bound on the set of nodes that can injure $\eta(x)$. We computably bound the injury from each $\rho$ in the set to computably bound the total injury on $\eta(x)$. We show that after exhausting quota, any injury inflicted by $\rho$ must be ``triggered'' by injury from some $\rho'\neq\rho$ of smaller edge layer than $\rho$:\\

\noindent Wait for $\rho$ to exhaust quota from $x$. Then $\rho$ will only act when $\eta(x)$ is $\rho$-correct. But because $\rho$ may not yet be an edge node, $\rho$ may not see the outcomes of injurious nodes extending $\rho^\frown\infty$, and may misjudge the correctness of $\eta(x)$. For example, $\rho$ may pick a use at a $\rho$-correct stage $s$, thinking that computation $\eta(x)$ has finalized. Unknowing to $\rho$, some $\rho'$ with use $u'<\USE(\Phi^A_\rho(x)[s])$ will injure $\eta(x)$ later. The restored use of $\eta(x)$ exceeds $u$, so $\rho$ is triggered to unexpectedly injure $\eta(x)$ when $\rho$ enumerates $u$ into $A$. But $\rho'$ cannot have been an initial segment of $\rho$, because otherwise $\rho$ would have seen the outcome of $\rho'$, contradicting $s$ being a $\rho$-correct stage. In fact, $\rho'$ also cannot lie to the left or right of $\rho$ because the tree strategy makes these nodes irrelevant to the argument. So $\rho'$ must extend $\rho^\frown\infty$ as nodes, implying that $\rho'$ has smaller edge layer than $\rho$.\\

\noindent Summarizing, after exhausting quota, whenever $\rho$ injures $\eta(x)$, $\rho$ must be ``triggered'' by another node of smaller edge layer. Therefore the number of times $\rho$ injures $\eta(x)$ cannot exceed the total injury from smaller edge layers. Then by strong induction on edge layer number, we can computably bound the injury from any node in $\texttt{quota}(x)$. Summing these bounds gives a computable bound of total injury on $\eta(x)$. Formally:\\

\begin{definition} \label{def:trigger}
    Assume $\rho\in\texttt{quota}(x)$ injures $\eta(x)$ at stage $s$ by enumerating its use $u$ into $A$, and use $u$ was picked by $\rho$ at some $\rho$-stage $s'\leq s$ after $\rho$ has exhausted quota from $x$. Then $\USE(\eta(x)[s'])<u$, so between stages $s'$ and $s$, there must have been a first $\delta\neq\rho$ that injured $\eta(x)$, causing the use of $\eta(x)$ to eventually exceed $u$. We say that $\delta$ is the node that triggered $\rho$ to injure $\eta(x)$ at stage $s$.
\end{definition}

\begin{claim} \label{claim:trigger}
    If $\delta$ is the node that triggered $\rho$ to injure $\eta(x)$ at stage $s$, then $\delta\succeq \rho^\frown\infty$. In particular, if $\delta=\rho'$, then $\rho'$ has smaller edge layer than $\rho$.
\end{claim}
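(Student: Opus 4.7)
The plan is a case analysis on the position of $\delta$ in $\Lambda$ relative to $\rho$, ruling out every possibility except $\delta \succeq \rho^\frown\infty$. Two facts drive the arguments: (a) $\rho$ is not initialized anywhere in the open interval $(s', s)$, because its follower and use $u$ survive to be enumerated at stage $s$; and (b) since $\delta$ is the \emph{first} node other than $\rho$ to injure $\eta(x)$ after $s'$, at every stage in $(s', s'']$, where $s''$ is the injection stage of $\delta$ (before $\delta$ acts), the value $\USE(\Phi^A_\eta(x))$ is unchanged from $\USE(\Phi^A_\eta(x)[s'])$. These combine with two standard features of the construction: $\rho$-correctness of $\eta(x)$ at $s'$, which is forced because $\rho$ has exhausted its quota from $x$ yet is selected to pick use at $s'$, and the fact that any ``large'' use picked at a stage $> s'$ strictly exceeds the already-seen value $\USE(\Phi^A_\eta(x)[s'])$.

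I first rule out the cases in which $\rho$ would be initialized inside $(s', s)$. If $\delta <_L \rho$, or $\delta \prec \rho$ with $\delta^\frown\texttt{fin} \preceq \rho$, then at $\delta$'s injection stage $s''$ the computed path $\delta_{s''}$ lies strictly to the left of $\rho$ (in the second subcase because $\delta$ only enumerates when $\delta^\frown\infty \preceq \delta_{s''}$, diverging leftward from $\rho$), so the initialize-to-the-right rule destroys $\rho$'s use, contradicting (a). I next rule out the remaining incorrect positions by forcing $\delta$'s injecting use above $\USE(\Phi^A_\eta(x)[s'])$. If $\delta \prec \rho$ with $\delta^\frown\infty \preceq \rho$, then $\rho$-correctness pins any pre-$s'$ use of $\delta$ above $\USE(\Phi^A_\eta(x)[s'])$, and any use picked after $s'$ is large. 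If $\delta >_L \rho$, building $\delta_{s'}$ initializes $\delta$ on the right, so $\delta$'s subsequent use is fresh and large. If $\delta \succeq \rho^\frown\texttt{fin}$, then at $s'$ we have $\rho^\frown\infty \preceq \delta_{s'}$ and $\delta >_L \rho^\frown\infty$, so $\delta$ is initialized at $s'$ and again picks only large uses afterwards. In each subcase fact (b) implies that the use of $\eta(x)$ just before $s''$ equals $\USE(\Phi^A_\eta(x)[s'])$, which is strictly below $\delta$'s use; hence $\delta$ cannot injure $\eta(x)$, a contradiction. Thus only $\delta \succeq \rho^\frown\infty$ survives.

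For the ``in particular'' clause, Fact~\ref{fact:inj-set} forces any injuring $\delta = \rho'$ into $\texttt{quota}(x)$, so $\rho' \in \texttt{quota}(x)$ and $\rho' \succeq \rho^\frown\infty$. Every $\rho''$ eligible in the maximum defining the edge layer of $\rho'$ (i.e.\ $\rho'' \succeq \rho'^\frown\infty$ with $\rho'' \in \texttt{quota}(x)$) automatically satisfies $\rho'' \succeq \rho^\frown\infty$ and so is eligible in the maximum defining the edge layer of $\rho$; moreover the chain $\{\rho^* : \rho^\frown\infty \preceq \rho^* \preceq \rho''\}$ strictly contains $\{\rho^* : \rho'^\frown\infty \preceq \rho^* \preceq \rho''\}$, with at least one extra node. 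Taking the maximum over such $\rho''$ yields that $\rho$'s edge layer strictly exceeds $\rho'$'s.

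The main obstacle is the case $\delta \succeq \rho^\frown\texttt{fin}$: unlike the ``left'' cases, the accessibility of $\delta$ at the injection stage does not initialize the ancestor $\rho$, so the argument must instead look back to stage $s'$ and exploit the fact that $\delta$ lies to the right of $\rho^\frown\infty$ at that moment, forcing initialization there. Tracking when each use was picked and verifying that any use picked after $s'$ strictly exceeds $\USE(\Phi^A_\eta(x)[s'])$ is the most delicate part of the bookkeeping.
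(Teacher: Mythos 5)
Your proof is correct and follows essentially the same approach as the paper's: an exhaustive case split on $\delta$'s tree position relative to $\rho$, driven by the initialization rule, $\rho$-correctness of $\eta(x)$ at the use-picking stage, and the freshness of newly picked uses. You are in fact slightly more explicit than the paper in the subcases $\delta\prec\rho$ with $\delta^\frown\texttt{fin}\preceq\rho$ and $\delta\succeq\rho^\frown\texttt{fin}$, which the paper's trichotomy ($\delta\not<_L\rho^\frown\infty$, $\rho\not<_L\delta$, $\delta\not\preceq\rho$) handles only implicitly via the leftward path at $\delta$'s action stage.
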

\begin{proof}
    Let $s_0<s$ be the $\rho$-stage after $\rho$ exhausted quota and where $\rho$ picked the use which injured $\eta(x)$ at stage $s$. At stage $s_0$, $\eta(x)$ was $\rho$-correct, implying $u>\USE(\Phi^A_\eta(x)[s_0])$. Let $s_1>s_0$ be the $\delta$-stage where $\delta$ enumerated its use $u'$ into $A$, injuring $\eta(x)$ and triggering the future injury from $\rho$. Note that $u'<\USE(\Phi^A_\eta(x)[s_0])$ and $u'<u$.\\
    
    \noindent Now $\delta\not<_L \rho^\frown\infty$, otherwise $\delta$ will initialize $\rho$ at stage $s_1$, then $\rho$ cannot have injured $\eta(x)$ at stage $s_2$ with use $u$. Also, $\rho \not<_L \delta$, otherwise $\rho$ would have initialized $\delta$ at stage $s_0$, contradicting $u'<u$. Finally, $\delta\not\preceq\rho$, otherwise $\eta(x)$ would not be $\rho$-correct at stage $s_0$ since $u'<\USE(\Phi^A_\eta(x[s_0])$. So $\delta$ must extend $\rho^\frown\infty$ as nodes.\\
\end{proof}

\begin{definition}
    Define the injury power of $\rho$ on $\eta(x)$ as
    \[\texttt{InjPow}_\eta(x,\rho) := \text{Number of times } \rho \text{ injures } \eta(x) \text{ at the } S_\eta\text{-stages after } t_\eta(x).\]
\end{definition}

\begin{claim} \label{claim:edge-inductive}
    If $\rho\in\texttt{quota}(x)$ has edge layer $r$, then
    \begin{align} \label{eq:injpow}
        \texttt{InjPow}_\eta(x,\rho) \leq \texttt{quotaFor}_x(\rho) +\sum_{\substack{\rho'\in \texttt{quota}(x)\\ \rho' \text{ has edge layer } <r}}\; \texttt{InjPow}_\eta(x,\rho').
    \end{align}
\end{claim}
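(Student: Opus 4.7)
The plan is to split $\rho$'s injuries on $\eta(x)$ at the $S_\eta$-stages past $t_\eta(x)$ into two groups: those inflicted before $\rho$ has exhausted its quota from $x$, and those inflicted afterwards. The first group is bounded directly from the $(\eta^\frown\infty)$-strategy: prior to exhaustion $\rho$ takes at most $\texttt{quotaFor}_x(\rho)$ actions, and since every injury by $\rho$ on $\eta(x)$ requires an enumeration action, this group contributes at most $\texttt{quotaFor}_x(\rho)$ to $\texttt{InjPow}_\eta(x,\rho)$.

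For the second group I would invoke Claim~\ref{claim:trigger}. After $\rho$ enumerates its last pre-exhaustion use it has no assigned use, so every subsequent injury is an enumeration whose use was picked \emph{strictly after} exhaustion, necessarily at a $\rho$-correct $(\eta^\frown\infty)$-stage. Claim~\ref{claim:trigger} then furnishes a triggering node $\delta \neq \rho$ for each such injury; by Fact~\ref{fact:inj-set} $\delta$ lies in $\texttt{quota}(x)$, and from $\delta \succeq \rho^\frown\infty$ it has edge layer strictly smaller than $r$. The idea is to bound the second group by assigning to each post-exhaustion $\rho$-injury its triggering injury-event on $\eta(x)$, and to argue that this assignment is injective as a map into the set of injury-events on $\eta(x)$ inflicted by nodes of smaller edge layer.

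The main obstacle is establishing this injectivity. I would argue as follows: if $\rho$'s post-exhaustion injuries occur at stages $s_1 < s_2 < \cdots$ with respective use-picks at stages $s_i' < s_i$, then $s_{i+1}' > s_i$ because $\rho$ has no assigned use immediately after enumerating at $s_i$. Each triggering event for the $i$-th injury takes place at some stage strictly between $s_i'$ and $s_i$, so the successive trigger-stages satisfy $s_i'' < s_i < s_{i+1}' < s_{i+1}''$, forming a strictly increasing sequence. Hence the triggering events are pairwise distinct even when the triggering nodes coincide, so the second group contributes at most the sum appearing on the right of (\ref{eq:injpow}). Adding the two bounds yields the claim; no separate base case is required, as the empty sum at $r = 0$ recovers the edge-node bound up to the book-keeping already encoded in $\texttt{quotaFor}_x$.
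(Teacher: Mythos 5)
Your proposal follows the same approach as the paper's one-line proof: split the injury count at the moment $\rho$ exhausts its quota from $x$, charge the pre-exhaustion injuries to $\texttt{quotaFor}_x(\rho)$, and charge the post-exhaustion injuries to their triggering nodes via Claim~\ref{claim:trigger}. The explicit injectivity argument --- interleaving use-picks, trigger-events, and enumerations to show that the trigger stages form a strictly increasing sequence and are therefore pairwise distinct, even when the triggering node repeats --- is a genuine and necessary elaboration of a step the paper leaves entirely implicit, and it is correct as you state it.

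There is, however, a small bookkeeping gap that you gesture at (``up to the book-keeping already encoded in $\texttt{quotaFor}_x$'') but do not actually close. If $\rho$ holds an assigned use at the moment its quota is exhausted, the eventual enumeration of that use falls in your second group (it occurs after exhaustion), yet Claim~\ref{claim:trigger} does not apply to it, because that use was picked \emph{before} exhaustion. Your two partial bounds therefore sum to $\texttt{quotaFor}_x(\rho)+1+\sum(\cdots)$, not the claimed $\texttt{quotaFor}_x(\rho)+\sum(\cdots)$; notice the paper's own Claim~\ref{claim:edge} carries exactly this extra $+1$, so the tension is already present in the source. To recover the inequality as stated, tighten the first group: among the first $\texttt{quotaFor}_x(\rho)$ actions (which alternate picks and enumerations starting with a pick), at most $\lfloor \texttt{quotaFor}_x(\rho)/2 \rfloor$ are enumerations, so the pre-exhaustion injuries number at most $\lfloor \texttt{quotaFor}_x(\rho)/2 \rfloor$, leaving room to absorb the extra unit. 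Equivalently, split by when the use was \emph{picked} rather than when the injury was inflicted: there are at most $\lceil \texttt{quotaFor}_x(\rho)/2 \rceil$ pre-exhaustion picks, and every post-exhaustion pick occurs at a $\rho$-correct stage, so Claim~\ref{claim:trigger} then applies cleanly to all of the remaining injuries.
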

\begin{proof}
    The $\texttt{quotaFor}_x(\rho)$ term comes from $\rho$ exhausting quota from $x$, and the summation term comes from Claim~\ref{claim:trigger}.
\end{proof}

\begin{proof}
    (Of Theorem~\ref{thm:nonlow-ac}): From Fact~\ref{fact:inj-set}, working only in the $(\eta^\frown\infty)$-stages after stage $t_\eta(x)$, the number of times $\eta(x)$ gets injured equals $\sum_{\rho\in\texttt{quota}(x)} \texttt{InjPow}_\eta(x,\rho)$, where we can bound each $\texttt{InjPow}_\eta(x,\rho)$ computably using the recursive relation Eq.~(\ref{eq:injpow}). Together with the fact that $\texttt{quota}(x)$ is a computable set, the total injury on $\eta(x)$ will be computable and independent from $\eta$, which completes the proof.\\
    
    \noindent To give a concrete bound of the injury on $\eta(x)$, we assume that the pairing function used for the definition of $\texttt{quota}(x)$ is such that $\max \{\max(|\rho|, k):\; \langle \rho,k\rangle \in \texttt{quota}(x)\} <x$. Then we can show that
    \begin{equation} \label{eq:injpow-tot}
        \text{Number of times } \eta(x) \text{ gets injured} \leq x^24^{x^2}.
    \end{equation}
    
    \noindent To prove the above, we first apply induction on edge layer $r$ of $\rho\in\texttt{quota}(x)$ with Eq.~(\ref{eq:injpow}) to show that $\texttt{InjPow}_\eta(x,\rho) \leq x(r+1)4^{(r+1)^2}$: The claim holds trivially if $r=0$, so assume $r\geq1$. By assumption on the pairing function, for all $\rho'$, $\texttt{quotaFor}_x(\rho') <x$, so Eq.~(\ref{eq:injpow}) becomes
    \begin{align*}
        \texttt{InjPow}_\eta(x,\rho) \leq &x +\sum_{\substack{\rho'\in \texttt{quota}(x)\\ \rho' \text{ has edge layer } <r}}\; \texttt{InjPow}_\eta(x,\rho')\\
        \leq &x +|\{\rho'\succ\rho:\; \rho' \text{ edge layer } <r\}| \cdot \max_{\rho'' \text{ edge layer } r-1} \texttt{InjPow}_\eta(x,\rho'')\\
        \leq &x +2^r \cdot \max_{\rho'' \text{ edge layer } r-1} \texttt{InjPow}_\eta(x,\rho'')\\
        \leq &x +2^r \cdot \left(xr4^{r^2}\right) &(\text{induction hypothesis})\\
        \leq &x2^{2r^2+r} +2^r \cdot \left(xr4^{r^2}\right) &(\because\; r\geq1)\\
        = &x(r+1)2^{2(r+1)^2-3r-2}\\
        \leq &x(r+1)2^{2(r+1)^2}\\
        = &x(r+1)4^{(r+1)^2},\\
    \end{align*}
    which completes the claim. Then by our assumption on the pairing function, 
    \begin{align*}
        \text{Number of times } \eta(x) \text{ gets injured} = &\sum_{\rho\in\texttt{quota}(x)} \texttt{InjPow}_\eta(x,\rho)\\
        \leq & \texttt{InjPow}_\eta(x,\rho), &\text{where } \rho \text{ has edge layer } x\\
        \leq &x(x+1)4^{(x+1)^2}\\
        \leq &(x+1)^24^{(x+1)^2}.
    \end{align*}
\end{proof}

\section{Low, Totally $\alpha$-c.a.}

Given ``reasonable'' ordinal $\alpha$, we build a low c.e.\ set $A$ that ``changes its mind $\alpha$-times''. Before constructing $A$, we formalize in the following Section~\ref{sec:alpha} what it means for $A$ to make $\alpha$-mind-changes, and what it means for $\alpha$ to be reasonable.

\subsection{Totally $\alpha$-c.a.} \label{sec:alpha}
By Shoenfield's Limit Lemma, every $\Delta^0_2$ set $A$ has a \emph{computable-approximation} $\langle a_s(x) \rangle_s$, which is a uniformly computable sequence such that for all $x\in\omega$,
\[A(x) =\lim_s a_s(x).\]

\noindent To measure how fickle $A$ is, we want to keep track of how often $a_-(x)$ changes its mind. This idea was formalized as follows:

\begin{definition}[\cite{downey2015transfinite} II.D1.1] ~\label{def:comp-approx}
    Let $\mathcal{R} =(R, <_\mathcal{R})$ be a computable well-ordering (both $R$ and $<_\mathcal{R}$ are computable), and $A$ be a $\Delta^0_2$ function. Then an $\mathcal{R}$-computable-approximation of $A$ is a computable-approximation $\langle a_s\rangle$ of $A$, together with a uniformly computable sequence $\langle m_s \rangle_s$ of functions $m_-(x):\omega\to R$ such that for all $x$ and $s$:
    \begin{itemize}
        \item $m_{s+1}(x) \leq_\mathcal{R} m_s(x)$
        \item if $a_{s+1}(x) \neq a_s(x)$, then $m_{s+1}(x) <_\mathcal{R} m_s(x)$.
    \end{itemize}
    
    \noindent The sequence $\langle m_s\rangle$ is called the \emph{mind change function of $A$}.
\end{definition}

\noindent It is tempting to define $A$ as changing its mind $\alpha$-times if $A$ has an $\mathcal{R}$-computable-approximation of \emph{order type} $\otp(\mathcal{R}) =\alpha$. But we want a notion of mind changes that gives a non-trivial hierarchy, and such that higher levels of the hierarchy have stronger lattice-embedding abilities. If $\mathcal{R}$ was only required to be computable, then the hierarchy would collapse to the $\omega$-level (Ershov): Given $A\in\Delta^0_2$ and letting $\langle \langle a_s(x)\rangle_s \rangle_x$ be a computable approximation of $A$, the computable well ordering $\mathcal{R}$ with $R =\{\langle x,s\rangle:\; a_s(x) \neq a_{s+1}(x)\}$ and ordering $\langle x,s\rangle <_\mathcal{R}\langle x',s'\rangle \iff [x<x' \text{ or } (x=x'\; \&\; s>s')]$ has order type $\omega$, witnessing $A$ being at the $\omega$ level.\\

\noindent Even if we require $\mathcal{R}$ to satisfy the additional properties of having its set of limit points $L(\mathcal{R})$ and its successor function $S_{\mathcal{R}}:R\to R$ to also be computable, the hierarchy would still collapse, though to the $\omega^2$-level (Ershov): Let $\mathcal{R}$ be the computable well ordering with $\otp(\mathcal{R}) =\omega$ defined above. Then $\omega \cdot \mathcal{R}$ will be be a computable well ordering of order type $\omega^2$. Also, $L(\omega\cdot \mathcal{R})$ will be a computable set, because $\langle n,z\rangle \in L(\omega\cdot \mathcal{R})$ iff $n\neq0$ and $z$ is the $\mathcal{R}$-smallest element. Finally, $S_{\omega\cdot\mathcal{R}}(\langle n,z\rangle)$ is a computable function, because $S_{\omega\cdot\mathcal{R}}(\langle n,z\rangle) =\langle n+1,z\rangle$. Thus $\omega\cdot\mathcal{R}$ witnesses $A$ being at the $\omega^2$ level. Therefore a meaningful definition of mind changes calls for $\mathcal{R}$ to satisfy further properties.\\

\noindent In lattice embeddability constructions, we often need the notion mind changes to be independent from the $\mathcal{R}$ used. Say we want to diagonalize out of the sets that change their minds less than $\alpha$-times. To effectively enumerate these less fickle sets, we fix some $\mathcal{R}$ with $\otp(\mathcal{R})=\alpha$ and enumerate all sets with an $\mathcal{R}$-computable-approximation. However, there might be some $A$ that also changes its mind less than $\alpha$-times, but via a different computable well-ordering $\mathcal{R}'$. Then our enumeration will not include $A$ if there is no procedure to get to $\mathcal{R}$ from $\mathcal{R}'$ effectively. Therefore $\mathcal{R}$ must be such that if $A$ changes its mind $\alpha$-times via $\mathcal{R}$ and also $\alpha'$-times via $\mathcal{R}'$ and $\alpha>\alpha'$, then $A$ changes its mind $\alpha'$-times via $\mathcal{R}\restriction \alpha'$.\\

\noindent Another crucial property for $\mathcal{R}$ to have is sufficient informativeness. Given $r\in R$, we want to have a rough idea of how large the ordinal with notation $r$ is. For example, say we want to use a sufficiently fickle $A$ to build another set satisfying certain requirements. At each stage of the construction, we check the number of mind changes that $A$ has left, and decide a strategy based on that number $r\in R$. If $A$ has more than $\omega^\omega$-mind changes left for example, we might play a different strategy than if $A$ has only $\omega$ mind changes left. Therefore $\mathcal{R}$ needs to contain enough information to compare ordinal notations meaningfully.\\

\noindent To these ends, it is enough for $\mathcal{R}$ to be \emph{canonical}:

\begin{definition}[\cite{downey2015transfinite} II.D2.2]
    Let $\mathcal{R}$ be a computable well-ordering with $\otp(\mathcal{R}) =\alpha$. Then $\text{nf}_\mathcal{R}: \omega\rightarrow (\omega^2)^{<\omega}$ denotes the function that takes each ordinal below $\alpha$ to its Cantor-normal form, i.e. $\forall z\in R$
    \[\nf_\mathcal{R}(z) = \langle \langle z_0, n_0 \rangle, \ldots, \langle z_i, n_i\rangle \rangle,\]
    where $z_j\in R$, $|z_0| > \ldots > |z_i|$, $n_j \in \omega-\{0\}$, and
    \[|z| = \omega^{|z_0|}\cdot n_0 + \ldots + \omega^{|z_i|} \cdot n_i\]
    gives the unique Cantor-normal form of $|z|$. If $\nf_\mathcal{R}$ is computable, we say that $\mathcal{R}$ is canonical.
\end{definition}

\noindent We are now ready to formalize mind change frequency of a set $A$ and of a c.e.\ degree:
\begin{definition}[\cite{downey2015transfinite} II.D2.9] \label{def:alpha-sets}
    $A\in\Delta^0_2$ is $\alpha$-c.a.\ if $A$ has an $\mathcal{R}$-computable-approximation for some canonical $\mathcal{R}$ with $\otp(\mathcal{R}) =\alpha$. Informally, if $A$ is $\alpha$-c.a., we say that set $A$ changes its mind $\alpha$-times.
\end{definition}

\begin{definition}[\cite{downey2015transfinite} III.D1, III.L1.1] \label{def:tot-alpha}
    Degree $\bm{d}\in\Delta^0_2$ is totally $\alpha$-c.a.\ if every $A\leq_T \bm{d}$ is $\alpha$-c.a.. Informally, we say that degree $\bm{d}$ changes its mind $\alpha$-times.
\end{definition}

\noindent Note that when $A\in\Delta^0_2$ changes its mind $\alpha$-times, even when $\alpha$ is infinite, $A(x)$ actually only changes its mind finitely often from the definition of computable approximation (Definition~\ref{def:comp-approx}), together the fact that there is no infinite descending sequence of ordinals. Therefore when we construct a set $A$ with $\geq\omega$ mind changes, the mind change requirement will inflict only finite injury.\\

\noindent By requiring that $\mathcal{R}$ be canonical, the mind change hierarchy obtained is $\low_2$ (\cite{downey2015transfinite} III.T1.2), and gives meaningful lattice embeddability results - the degrees above the $\omega$-level are exactly those below which one can embed the critical-triple \cite{downey2007totally}, and the degrees above the $\omega^\omega$-level are exactly those below which one can embed $M_3$ \cite{downey2015transfinite}. Also, if we focus on the levels below
\[\epsilon_0 := \sup\{\omega, \omega^\omega, \omega^{\omega^\omega}, \ldots\},\]
then a set's fickleness will not depend on the $\mathcal{R}$ picked (\cite{downey2015transfinite} II.P2.3, P2.8). Looking at these lower level degrees is enough for now because all the embeddability results we have discussed lie below $\epsilon_0$. Finally, as desired, the mind change hierarchy is non-trivial:
\begin{fact}[\cite{downey2015transfinite} III.L2.1] \label{fact:alpha-not-beta}
    Let $\alpha\leq \epsilon_0$, and let $A$ be a totally $\alpha$-c.a.. If $\alpha$ is a power of $\omega$, then there exists a c.e.\ set $A$ that is totally $\alpha$-c.a.\ but not totally $\beta$-c.a.\ for any $\beta<\alpha$.
\end{fact}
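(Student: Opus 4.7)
The plan is to build a c.e.\ set $A$ together with an $A$-computable function $f = \Gamma^A$ so that (a) every function Turing-computable from $A$ is $\alpha$-c.a., and (b) $f$ is not $\beta$-c.a.\ for any $\beta < \alpha$. Item (b) certifies that the degree of $A$ is not totally $\beta$-c.a.\ for any $\beta < \alpha$, while (a) certifies that it is totally $\alpha$-c.a. Writing $\alpha = \omega^\gamma$, the closure property ``$\beta_1, \ldots, \beta_n < \omega^\gamma$ implies $\beta_1 + \cdots + \beta_n < \omega^\gamma$'', immediate from the Cantor normal form, will be the combinatorial workhorse.

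For (b), introduce positive requirements $P_e$ indexed over all triples $(\mathcal{R}, \psi, m)$ where $\mathcal{R}$ is a canonical computable well-ordering with $\otp(\mathcal{R}) < \alpha$ and $\langle \psi, m\rangle$ is a candidate $\mathcal{R}$-computable-approximation. Each $P_e$ picks a large follower $y_e$ and, whenever $\psi_s(y_e)$ changes, responds by flipping $\Gamma^A(y_e)$ via enumeration of a large use into $A$. Since $m_s(y_e)$ strictly descends in $\mathcal{R}$ at every one of $\psi$'s mind changes, at most $\otp(\mathcal{R}) + 1 < \alpha$ rounds of diagonalization suffice, after which either $P_e$ has won at $y_e$ or $\langle \psi, m \rangle$ has failed to be a legitimate $\mathcal{R}$-approximation.

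For (a), negative requirements $N_e$ equip each computation $\Phi^A_e(x)$ with an ordinal budget $\xi_e(x) < \alpha$, strictly decreased each time the computation is injured; the resulting sequence $\langle \xi_{e,s}(x)\rangle_s$ is precisely the mind-change function witnessing that $\Phi^A_e$ is $\alpha$-c.a. The $N_e$ restrain lower-priority positive requirements whose enumerations would overflow $\xi_e(x)$, exactly as the $(\eta^\frown\infty)$-strategy of Section~\ref{sec:nonlow-low2} distributes quota among positive nodes, but with ordinal budgets in place of finite integer quotas.

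The main obstacle is aligning these two demands: each $P_e$ may claim up to $\otp(\mathcal{R})+1$ actions, and the construction runs infinitely many $P_e$'s, so a naive accounting could overflow $\alpha$. Organize the construction on the priority tree of Section~\ref{sec:tree}; assign each positive node $\rho$ a fixed ordinal $\beta_\rho < \alpha$ equal to $\otp(\mathcal{R}_\rho)+1$, and set the budget of a negative node $\eta(x)$ to be the finite sum $\sum_{\rho \prec \eta} \beta_\rho$. This sum is finite because only finitely many $\rho$ precede a given $\eta$ on the tree, and it stays below $\omega^\gamma = \alpha$ by the closure property above; this is the one spot that essentially uses the hypothesis that $\alpha$ is a power of $\omega$. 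A true-path analysis analogous to Main Lemma 1 then confirms that every $N_e$ on the true path is eventually honored and every $P_e$ on the true path acts as often as needed, securing both (a) and (b).
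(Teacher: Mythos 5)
Your high-level decomposition matches the paper's: diagonalize a total $A$-computable function out of every $\beta$-c.a.\ function for $\beta<\alpha$, budget each $\Phi^A_e(x)$ by an ordinal $<\alpha$ that drops at every injury, and rely on closure of $\alpha=\omega^\gamma$ under finite sums. But two ingredients are not available as stated. First, $\beta_\rho=\otp(\mathcal{R}_\rho)+1$ is not computable from an index for $\mathcal{R}_\rho$; the set of indices of canonical well-orderings of order type $<\alpha$ is not even computable, so you cannot effectively enumerate your triples $(\mathcal{R},\psi,m)$, and for the legitimate ones you still cannot read off their order types. The paper avoids this via Fact~\ref{fact:enum}, which supplies a single canonical $\mathcal{R}$ of order type $\alpha$, a computable bounding function $g:\omega\to R$, and a uniformly computable enumeration $\langle f^e_s, m^e_s\rangle$ with $f^e$ provably $g(e)$-c.a.\ for every $e$. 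This is what lets the $N$-module declare a budget $\phi(e)\in R$ at a computable stage of the construction; without it the mind-change function $\phi$ cannot be written down.

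Second, the budget $\sum_{\rho\prec\eta}\beta_\rho$ undercounts. Whenever a positive requirement is initialized, it picks a fresh follower and can incur roughly another $\beta_\rho$ of injury; so the budget must carry a multiplicative factor bounding the number of initializations (this is the $(k+1)$ in the paper's Eq.~(\ref{eq:phi}), with the companion lemma showing that the almost-stabilized positive requirement is initialized at most $k$ times, $k$ being the number of active $N$-requirements at the relevant stage). Your sum is also taken over the wrong set of nodes: on a tree, ancestors $\rho\prec\eta$ with finitary outcome injure $\eta(x)$ only by reinitializing $\eta$ and resetting the accounting, whereas the injury $\eta$'s budget actually has to absorb comes from the positive nodes admitted into $\eta$'s list when $\eta$ becomes active; compare Eqs.~(\ref{eq:qlist}) and (\ref{eq:qlist2}). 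With these repairs your tree construction would prove Fact~\ref{fact:alpha-not-beta} and is close to Downey--Greenberg's original argument; note, though, that the paper's Theorem~\ref{thm:low} deliberately eschews the tree for a linear finite-injury construction precisely to also secure lowness of $A$.
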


\noindent The converse of the above fact is also true:
\begin{fact}[\cite{downey2015transfinite} III.L2.2] \label{fact:alpha-beta}
    Let $\alpha\leq \epsilon_0$, and let $A$ be a totally $\alpha$-c.a.. If $\alpha$ is not a power of $\omega$, then $A$ is totally $\beta$-c.a.\ for some $\beta<\alpha$.
\end{fact}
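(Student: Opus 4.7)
The crucial structural fact is that an ordinal $\alpha$ is a power of $\omega$ precisely when it is additively indecomposable. So if $\alpha$ is not a power of $\omega$, its Cantor normal form $\alpha = \omega^{\alpha_0}\cdot n_0 + \omega^{\alpha_1}\cdot n_1 + \cdots + \omega^{\alpha_k}\cdot n_k$ satisfies either $k\geq 1$ or $n_0\geq 2$, and in particular $\omega^{\alpha_0} < \alpha$ and we may write $\alpha=\gamma+\delta$ with $0<\gamma,\delta<\alpha$ (splitting off the leading $\omega^{\alpha_0}$). My plan is to fix such a decomposition and show that every $B\leq_T A$ admits a $\beta$-c.a.\ approximation for a single $\beta<\alpha$ depending only on $\alpha$, yielding that $A$ is totally $\beta$-c.a.

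First I would invoke the robustness result for $\alpha\leq\epsilon_0$ (Downey--Greenberg II.P2.3 and II.P2.8, mentioned just before Fact~\ref{fact:alpha-not-beta}): below $\epsilon_0$, the notion of $\alpha$-c.a.\ is independent of which canonical well-ordering of order type $\alpha$ one uses. So I can freely pick the standard $\mathcal{R}_\alpha$ based on Cantor normal forms. Given $B\leq_T A$ and its $\alpha$-c.a.\ approximation $(b_s, m_s)$, I would decompose each value $m_s(x)$ in CNF and split it into a ``leading coefficient'' $p_s(x)\in\{0,1,\dots,n_0\}$ (the coefficient of $\omega^{\alpha_0}$, together with whether all higher-order contributions vanish) and a ``tail'' $q_s(x)<\omega^{\alpha_0}$. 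The sequence $p_s(x)$ is non-increasing in $s$ with at most $n_0$ strict drops; between two consecutive drops of $p_s(x)$, the tail $q_s(x)$ descends strictly within an $\omega^{\alpha_0}$-sized window.

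Next I would build a compressed approximation $(b'_s, m'_s)$ with $b'_s = b_s$ and with $m'_s(x)$ encoding the pair $(p_s(x),q_s(x))$ via a new canonical well-ordering of order type $\beta<\alpha$. Concretely, I would allot each ``phase'' $p\in\{0,\dots,n_0\}$ its own initial segment of $\beta$ of order type at least $\omega^{\alpha_0}$, and embed the pair $(p,q)$ lexicographically with higher priority on $p$. Within a phase the embedded value decreases whenever $q_s(x)$ does, and when $p_s(x)$ drops we simply jump to the top of the next lower phase, absorbing the ``reset'' of the tail into a single strictly-decreasing step of $m'_s$. A total count shows that the resulting well-ordering has order type at most $\omega^{\alpha_0}\cdot n_0 + \omega^{\alpha_0}$, which, combined with the hypothesis that $\alpha$ is not a power of $\omega$, can be arranged to be strictly less than $\alpha$ (in the boundary case $\alpha=\omega^{\alpha_0}\cdot n_0$ with $n_0\geq 2$, one uses that the very first ``reset'' is absorbed into the CNF of $\alpha$ itself, so the total fits inside $\omega^{\alpha_0}\cdot(n_0-1)+\omega^{\alpha_0}=\alpha$; a bit more care squeezes this strictly below $\alpha$).

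The main obstacle I expect is precisely this last step: arranging the ``reset'' at each drop of $p_s(x)$ to be compatible with strict descent into a \emph{fixed} canonical well-ordering of order type $<\alpha$, uniformly in $B$. The risk is that a naive lexicographic encoding buys only a bound of $\alpha+1$ rather than something $<\alpha$; the non-power-of-$\omega$ hypothesis is exactly what provides the slack (the ``extra $\delta$'' beyond $\omega^{\alpha_0}$, or the extra factor beyond $n_0=1$) allowing us to telescope the reset into the phase structure. Since the construction of $\beta$ and of $(b'_s,m'_s)$ from $(b_s,m_s)$ depends only on $\alpha$ and is uniform in the approximation, the same $\beta$ works for every $B\leq_T A$, proving $A$ totally $\beta$-c.a.
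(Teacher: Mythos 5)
Your approach has a fundamental gap: it never invokes the hypothesis that $A$ is \emph{totally} $\alpha$-c.a., yet that hypothesis is essential. You are attempting to compress a single function's $\alpha$-c.a.\ approximation into a $\beta$-c.a.\ one for a fixed $\beta<\alpha$, but when the leading coefficient $p_s(x)$ drops, the new tail $q_{s+1}(x)<\omega^{\alpha_0}$ cannot be computably bounded in advance, so each phase of your lexicographic encoding must be allotted a full block of order type $\omega^{\alpha_0}$. This forces a well-ordering of order type $\omega^{\alpha_0}\cdot(n_0+1)>\alpha$; the ``absorb the first reset'' manoeuvre brings it down at best to $\alpha$ exactly (your own arithmetic in the boundary case, $\omega^{\alpha_0}\cdot(n_0-1)+\omega^{\alpha_0}=\alpha$, shows this), never strictly below $\alpha$. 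Indeed no local rewriting of a single mind-change function can succeed: for $\alpha$ not a power of $\omega$ there are individual $\alpha$-c.a.\ functions that are not $\beta$-c.a.\ for any $\beta<\alpha$, so the fact cannot reduce to a statement about one approximation at a time.

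What carries the actual argument is closure of the degree of $A$ under the $\alpha$-c.a.\ hypothesis. From $B\leq_T A$ together with its $\alpha$-c.a.\ approximation one defines an auxiliary total function $C\leq_T A$ recording the reset ordinals, for instance the value $q_{s_0(x)}(x)$ at the first stage $s_0(x)$ where $p_s(x)$ drops. Since $A$ is \emph{totally} $\alpha$-c.a., $C$ itself admits an $\alpha$-c.a.\ approximation; combining it with the approximation of $B$ supplies in advance the bound on resets that your encoding lacks, and only then can one produce a $\beta$-c.a.\ approximation of $B$ with $\beta<\alpha$ determined by $\alpha$ alone. The missing idea in your proposal is precisely this move from a property of one approximation to the closure of the whole degree under the hypothesis.
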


\noindent The key ingredient that makes Fact~\ref{fact:alpha-not-beta} hold is:
\begin{fact} \label{fact:sum-power-omega}   
    Let $\alpha<\epsilon_0$. Then $\alpha$ is closed under ordinal addition if and only if $\alpha$ is a power of $\omega$.
\end{fact}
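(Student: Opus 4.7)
The plan is to prove both directions by analyzing the Cantor normal form of $\alpha$. The forward direction will show that if $\alpha = \omega^\beta$ then every pair of ordinals below $\alpha$ sums to an ordinal below $\alpha$, using the fact that ordinal addition is controlled by the leading exponent of the CNF. The backward direction will show that if $\alpha>0$ is not a power of $\omega$, then $\alpha$ itself can be written as a sum $\beta_1 + \beta_2$ with both summands strictly below $\alpha$, which directly violates closure.

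For the forward direction, I would fix $\alpha = \omega^\beta$ and let $\gamma,\delta<\alpha$. By Cantor normal form, write $\gamma = \omega^{\gamma_0} n_0 + \gamma'$ and $\delta = \omega^{\delta_0} m_0 + \delta'$ with $\gamma_0,\delta_0<\beta$ and $\gamma'<\omega^{\gamma_0}$, $\delta'<\omega^{\delta_0}$. The key input is the absorption law $\rho + \omega^\sigma = \omega^\sigma$ whenever $\rho<\omega^\sigma$. Applying this either to $\gamma$ (if $\gamma_0<\delta_0$) or to $\gamma'$ (if $\gamma_0 \geq \delta_0$), the sum $\gamma+\delta$ has leading exponent $\max(\gamma_0,\delta_0)<\beta$, hence $\gamma+\delta<\omega^\beta=\alpha$.

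For the backward direction, assume $\alpha>0$ is not a power of $\omega$, and write its CNF as $\alpha = \omega^{\alpha_0} m + \gamma$ with $m\geq 1$ and $\gamma<\omega^{\alpha_0}$. Not being a power of $\omega$ forces either $m\geq 2$ or $\gamma>0$. In the case $\gamma>0$, take $\beta_1 := \omega^{\alpha_0} m$ and $\beta_2 := \gamma$; both lie strictly below $\alpha$ (the first because $\gamma>0$, the second because $\gamma<\omega^{\alpha_0}\leq \alpha$) and $\beta_1+\beta_2=\alpha$. In the case $\gamma=0$ and $m\geq 2$, take $\beta_1:=\omega^{\alpha_0}(m-1)$ and $\beta_2:=\omega^{\alpha_0}$; both are in $[1,\alpha)$ and sum to $\omega^{\alpha_0} m = \alpha$. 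Either witness refutes closure.

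I do not expect a serious obstacle here: the hypothesis $\alpha<\epsilon_0$ only enters to guarantee that the CNF argument is meaningful (exponents strictly drop, so the analysis is well-founded). The one place to be careful is the absorption step in the forward direction, where one must separately handle $\gamma_0<\delta_0$, $\gamma_0=\delta_0$, and $\gamma_0>\delta_0$ to confirm that the leading exponent of $\gamma+\delta$ is indeed $\max(\gamma_0,\delta_0)$; everything else is bookkeeping on CNF.
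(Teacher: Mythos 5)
The paper does not actually prove this Fact: it is stated without proof as a standard result in ordinal arithmetic (that the additively indecomposable ordinals, often called $\gamma$-numbers, are exactly the powers of $\omega$). So there is no paper argument to compare against; your Cantor-normal-form proof is the standard textbook one and is correct. Your backward direction is airtight and cleanly handles both the $\gamma>0$ and $\gamma=0,\ m\geq 2$ subcases.

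One small point worth tightening in the forward direction. In the case $\gamma_0>\delta_0$ you say the conclusion follows by ``applying absorption to $\gamma'$,'' but one cannot directly absorb $\gamma'$ into $\omega^{\delta_0}m_0$ since $\gamma'$ need not lie below $\omega^{\delta_0}$ (it is only bounded by $\omega^{\gamma_0}$). The clean statement you actually want is the general CNF-addition lemma: if $\gamma$ and $\delta$ have CNF leading exponents $\gamma_0$ and $\delta_0$, then $\gamma+\delta$ has leading exponent $\max(\gamma_0,\delta_0)$. That follows by expanding the full CNF of $\gamma$ and discarding every term whose exponent is strictly less than $\delta_0$ (possibly merging a term with exponent exactly $\delta_0$), rather than by a single absorption at $\gamma'$. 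You already flag this as the place to be careful, so this is a presentational rather than a logical gap. A last cosmetic remark: the $\alpha<\epsilon_0$ hypothesis is not actually used by your argument; the CNF characterization of additively indecomposable ordinals holds for all ordinals. The bound appears in the paper only because that is the range relevant to the mind-change hierarchy.
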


\noindent To see how $\alpha$ being closed under addition is crucial for constructing sets $A$ at the $\alpha$ level, consider the requirements that $A$ will need to satisfy. To avoid being totally $\beta$-c.a.\ for any $\beta<\alpha$, $A$ needs to compute some total $\Delta^A$ that diagonalizes out of all the $\beta$-c.a.\ functions. This positive requirement calls for an uniform enumeration of all $\beta$-c.a.\ functions, which we can get from the following fact:
\begin{fact}[\cite{downey2015transfinite} II.L2.6] \label{fact:enum}
    Let $\alpha\leq \epsilon_0$. There is a uniform enumeration of the $\beta$-c.a.\ functions for all $\beta<\alpha$. Formally, there is a canonical $\mathcal{R}$ of order-type $\alpha$, a computable function $g(e):\omega \to R$ known as the \emph{bounding function}, and a uniformly computable series of functions $\langle \langle f^e_s(x), m^e_s(x)\rangle_s \rangle_e$ known as the \emph{computable-approximation functions}, such that for all $e\in\omega$, $\langle f^e_s(x), m^e_s(x)\rangle_s$ is a $g(e)$-computable-approximation of $f^e(x) :=\lim_s f^e_s(x)$, and $\langle f^e\rangle_{e\in\omega}$ enumerates the $\beta$-c.a.\ functions for all $\beta<\alpha$.
\end{fact}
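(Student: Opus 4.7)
The plan is to build the enumeration by a standard ``taming'' of partial computable candidates. First I would fix the canonical well-ordering $\mathcal{R}$ of order-type $\alpha$; this exists because $\alpha \leq \epsilon_0$ and every ordinal below $\epsilon_0$ admits a canonical Cantor-normal-form presentation with computable $\nf_\mathcal{R}$. Using a computable pairing function I would view each index $e$ as coding a pair $(i, b) \in \omega \times R$ and set $g(e) := b$, which is computable and realizes every $b \in R$ as $g(e)$ for infinitely many $e$ (one for each $i$).

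Next I would define $\langle f^e_s(x), m^e_s(x)\rangle_s$ by forcing the $i$-th partial computable two-place function $\phi_i$ to respect Definition~\ref{def:comp-approx}. Initialize $f^e_0(x) := 0$ and $m^e_0(x) := g(e)$. At stage $s+1$, for each $x \leq s+1$, run $\phi_i(x, s+1)$ for $s+1$ steps; if it converges to $\langle v, r\rangle$ with $r \in R$, $r \leq_\mathcal{R} m^e_s(x)$, and additionally $r <_\mathcal{R} m^e_s(x)$ whenever $v \neq f^e_s(x)$, then accept the update; otherwise keep the old values. This taming is uniformly computable in $(e,x,s)$ because $\mathcal{R}$ is computable, and it directly yields both bullets of Definition~\ref{def:comp-approx}. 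Totality of $f^e := \lim_s f^e_s$ follows from well-foundedness of $\mathcal{R}$: each $\langle m^e_s(x)\rangle_s$ is nonincreasing in $\mathcal{R}$, hence eventually constant, forcing $\langle f^e_s(x)\rangle_s$ to stabilize.

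The substantive step is to show every $\beta$-c.a.\ function $f$ with $\beta < \alpha$ appears as some $f^e$. Given $f$ witnessed by a canonical $\mathcal{R}'$ of order-type $\beta$ with approximation $\langle f_s, m_s\rangle$, I would invoke the invariance results \cite{downey2015transfinite} II.P2.3, II.P2.8 asserting that, below $\epsilon_0$, any canonical $\mathcal{R}'$ admits a computable order-embedding $\iota: R' \to R$ whose image is the initial segment of $\mathcal{R}$ of order-type $\beta$. Then $\phi_i(x,s) := \langle f_s(x), \iota(m_s(x))\rangle$ is partial computable for a standard index $i$, and picking any $b \in R$ with $|b| \geq \beta$ (which exists since $\beta < \alpha$) and letting $e$ code $(i, b)$, the taming step never rejects the updates of the genuine approximation $\langle f_s, m_s\rangle$, so $f^e = f$.

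The main obstacle is this third step: obtaining the computable embedding $\iota$ from an arbitrary canonical $\mathcal{R}'$ into the fixed $\mathcal{R}$. Without invariance, different canonical presentations of the same $\beta$ could produce $\beta$-c.a.\ functions our enumeration misses. This is precisely where canonicality of $\mathcal{R}$ (computability of $\nf_\mathcal{R}$) and the restriction $\alpha \leq \epsilon_0$ are essential, via uniqueness of Cantor-normal-form notation; once $\iota$ is in hand, the rest of the construction is a routine padded diagonal enumeration.
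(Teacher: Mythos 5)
The paper states this as a citation from Downey and Greenberg (II.L2.6) and gives no proof of its own, so there is no in-paper argument to compare against. Your proposal reconstructs the expected ``padded taming'' argument and looks essentially right: encode each index $e$ as a pair $(i,b)$ with $g(e)=b\in R$, force the $i$-th partial computable candidate to respect the non-increasing / strictly-decreasing-on-change discipline of Definition~\ref{def:comp-approx} starting from $m^e_0(x)=b$, and then argue every $\beta$-c.a.\ $f$ is captured by pushing its witnessing $\mathcal{R}'$-approximation into $\mathcal{R}$. Two details are worth tightening. First, the taming must be checked to be robust to slow convergence of $\phi_i$ (you run $\phi_i(x,s+1)$ for only $s+1$ steps, so you may skip intermediate approximation stages); this is fine precisely because $m_s(x)$ is non-increasing and strictly decreases across any interval in which $f_s(x)$ changes, so the skipped comparison still satisfies both bullets of Definition~\ref{def:comp-approx}. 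Second, the invariance step deserves one more sentence: II.P2.3/II.P2.8 give independence of $\beta$-c.a.\ from the choice of canonical presentation, and to extract from that the computable embedding $\iota\colon R'\to R$ you should note that for $b\in R$ with $|b|=\beta$ the restriction $\mathcal{R}\restriction b$ is itself canonical (since for $\gamma<\epsilon_0$ the exponents in $\nf_\mathcal{R}$ are strictly smaller, hence stay in $R\restriction b$), so the isomorphism $\mathcal{R}'\cong \mathcal{R}\restriction b$ supplied by that invariance is exactly your $\iota$. With those two points spelled out, this matches the structure of Downey and Greenberg's own construction.
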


\noindent Now when diagonalizing $\Delta^A$ out of a $\beta$-c.a.\ function $f^e$, $\beta$ elements \footnote{The number of elements is finite, but the elements can only be indexed canonically by a possibly infinite ordinal $\beta$.} may be enumerated into $A$. At the same time, for $A$ to be totally $\alpha$-c.a., if $\Phi^A$ is total, then for all $x\in\omega$, $\Phi^A(x)$ cannot change its mind more than $\alpha$-times. So $\Phi^A(x)$ must tolerate $\beta$-injury from each of its higher priority positive requirements. There are only finitely many such requirements, so the total injury on $\Phi^A(x)$ is a finite sum of ordinals $\beta<\alpha$. This sum can be bounded below $\alpha$ if and only if $\alpha$ is closed under ordinal addition. Therefore Fact~\ref{fact:sum-power-omega} is crucial for a successful construction of a properly totally $\alpha$-c.a. set $A$.\\

\noindent This construction outline was used by Downey and Greenberg to build a properly $\alpha$-level set $A$ when $\alpha$ is a power of $\omega$ (\cite{downey2015transfinite} III.L2.2). The authors used a tree in their construction. Because lowness generally doesn't mix on trees, it is not clear if the constructed set is low. In the following Section~\ref{sec:low}, we use the same outline to construct a properly totally $\alpha$-c.a.\ set $A$, but we count injury more carefully to avoid using a tree, thus ensuring the lowness of our set.

\subsection{Low, Totally $\alpha$-c.a.} \label{sec:low}
\begin{theorem} \label{thm:low}
    Let $\alpha\leq \epsilon_0$ be a power of $\omega$. Then there exists a c.e.\ set $A$ that is totally $\alpha$-c.a.\ but not totally $\beta$-c.a.\ for any $\beta<\alpha$. Furthermore, $A$ can be made to be low.
\end{theorem}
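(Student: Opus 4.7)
The plan is to adapt Downey and Greenberg's construction (\cite{downey2015transfinite} III.L2.1) but in a linear priority setting with explicit ordinal bookkeeping, so that standard Sacks preservation gives lowness. Using the uniform enumeration $\{(f^e, g(e))\}_e$ from Fact~\ref{fact:enum}, I will build a c.e.\ set $A$ together with a c.e.\ functional $\Delta^A$ that diagonalizes out of every $\beta$-c.a.\ function for $\beta<\alpha$. The requirements, in priority order, are $L_0<P_0<L_1<P_1<\cdots$, where $L_e$ asks that $\Phi^A_e(e)\!\downarrow$ be $\emptyset'$-decidable (for lowness), and $P_e$ asks that $\Delta^A(y_e)\neq f^e(y_e)$ for some follower $y_e$ (for proper $\alpha$-c.a.-ness). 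Each $L_e$ imposes the Sacks-style restraint $r_e(s):=\USE(\Phi^A_e(e)[s])$ when defined (and $0$ otherwise). Each $P_e$ maintains a large follower $y_e$, a current use $u_e$, and an ordinal marker $m_e(s)\leq_\mathcal{R} g(e)$ that decreases strictly whenever $P_e$ enumerates, tracking the mind changes of $f^e(y_e)$.

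At stage $s$, in priority order, $P_e$ picks a large follower if none exists; declares $\Delta^{A\restriction u_e}(y_e)[s]=1-f^e_s(y_e)$ for a fresh large use $u_e$ whenever none is currently assigned; and wishes to enumerate $u_e$ into $A$ whenever $f^e_s(y_e)$ catches up to $\Delta^A(y_e)$. I allow such an enumeration only when $u_e>r_{e'}(s)$ for every $L_{e'}$ of higher priority, and I initialize each $P_{e'}$ of lower priority whenever $P_e$ enumerates so that their followers and uses are re-picked large. No tree is needed: the outcomes of higher-priority requirements are absorbed into the ordinal markers $m_{e'}(s)$ together with the restraints $r_{e'}(s)$. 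Verification then proceeds by simultaneous induction on $e$. For lowness, each $P_{e'}$ with $e'\leq e$ performs only finitely many enumerations (since the mind changes of $f^{e'}$ on each of its finitely many assigned followers are finite by well-foundedness of $<_\mathcal{R}$), so $A$ stabilizes below any fixed bound eventually, $\liminf_s r_e(s)<\infty$, and the standard Sacks preservation argument satisfies $L_e$. For proper $\alpha$-c.a.-ness, each $P_e$ eventually settles with $\Delta^A(y_e)\neq f^e(y_e)$, so $\Delta^A\leq_T A$ diagonalizes every $\beta$-c.a.\ function for $\beta<\alpha$.

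The crux, and the main obstacle, is totally $\alpha$-c.a.-ness: for each $\Phi$ with $\Phi^A$ total and each $x$, I must exhibit an $\alpha$-bounded mind change function for $\Phi^A(x)$ that is uniformly computable in $x$. Each injury to $\Phi^A(x)$ is an enumeration of some $u_e\leq \USE(\Phi^A(x)[s])$ by some $P_e$; because each $P_e$ picks $u_e$ fresh and large, only finitely many $P_e$'s can ever threaten $\Phi^A(x)$, and each such $P_e$ contributes at most $g(e)$ mind changes. By Fact~\ref{fact:sum-power-omega}, the resulting finite ordinal sum lies below $\alpha$. The work lies in turning this informal bound into an honest $\alpha$-c.a.\ approximation uniformly in $\Phi$ and $x$: at each stage $s$ I will maintain the finite list of triples $(u_e,m_e(s),y_e)$ for those $P_e$'s whose currently-assigned use is $\leq \USE(\Phi^A(x)[s])$, and express their combined remaining budget in canonical form using the computable $\nf_\mathcal{R}$, verifying that this budget strictly drops in $<_\mathcal{R}$ whenever $\Phi^A(x)[s]$ changes. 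Because the priority list is linear, this bookkeeping is a direct computation on finitely many markers rather than a traversal of a tree of guesses, and the lowness verification becomes the standard Sacks $\liminf$ argument uncontaminated by any tree, which is precisely the improvement over Downey and Greenberg's original construction.
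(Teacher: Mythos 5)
Your overall skeleton (linear priority, positive diagonalization against the uniform enumeration from Fact~\ref{fact:enum}, negative restraints for lowness, ordinal bookkeeping for totally $\alpha$-c.a.-ness) matches the paper's, but the proposal is missing the one mechanism that makes the ordinal bookkeeping actually work, and the gap sits exactly where you say ``the work lies.'' The problem is re-initialization. When $P_e$ wants to enumerate $u_e$ but $u_e\leq r_{e'}(s)$ for some higher-priority restraint, $P_e$ cannot simply wait: $\Delta^A(y_e)$ is already declared equal to $1-f^e_{s_0}(y_e)$ and $f^e_s(y_e)$ now agrees with it, so if the restraint never drops below $u_e$ the diagonalization fails. (And it need not drop: $u_e$ was picked large at assignment time, but $\Phi^A_{e'}(e')$ may later recover with a use exceeding $u_e$.) So $P_e$ must abandon $y_e$ and take a fresh follower $y'_e$ --- and each fresh follower carries a \emph{fresh} budget of $g(e)$ mind changes of $f^e$. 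Consequently the contribution of $P_e$ to the injury of a fixed $\Phi^A(x)$ is not $g(e)$ but $g(e)\cdot(k_e+1)$ where $k_e$ is the number of times $P_e$ is re-initialized, and your proposed counter --- the sum of the current markers $m_e(s)$ over the $P_e$ currently threatening $\Phi^A(x)$ --- is not $\leq_\mathcal{R}$-non-increasing: it jumps back up to $g(e)$ every time $P_e$ resets. An $\mathcal{R}$-computable approximation must commit to $m_0(x)<\alpha$ at a computable stage and never increase it, so you must bound all future $k_e$ \emph{in advance}, at the moment the negative requirement declares its bound.

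This advance bound is the actual content of the paper's proof and it is not automatic. The paper has each negative requirement, upon first activation, freeze a list of eligible positive requirements (Eq.~(\ref{eq:qlist})) and commit to the bound $\phi(e)=\sum_Q g(Q)\cdot(k+1)$ of Eq.~(\ref{eq:phi}), where $k$ is the number of \emph{currently active} negative requirements; it then proves (via the ``almost stabilized'' argument) that the highest-priority positive requirement still wanting to act is re-initialized at most $k$ times --- each already-active negative requirement can block it at most once, and negative requirements activated later are generous enough never to block it --- while any positive requirement that exceeds this budget is struck from the list and permanently barred from injuring. Without this (or an equivalent) mechanism, your construction either lets the mind-change counter increase (so $A$ is not certified totally $\alpha$-c.a.) or blocks some $P_e$ permanently (so $A$ is not properly non-$\beta$-c.a.). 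Two smaller points: your large followers leave $\Delta^A$ undefined off the follower set, whereas the paper takes the least number not yet in $\mathrm{dom}(\Delta^A)$ to keep $\Delta^A$ total (this is what forces $Q$ to commit to a use immediately, creating the conflict above); and note that in the paper lowness is not obtained by a separate Sacks argument but falls out of the same requirement, since an ordinal bound on the mind changes of $\Phi^A_e(e)[-]$ already implies $\lim_s\Phi^A_e(e)[s]$ settles.
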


We use the standard finite injury construction and count injury carefully to bound the number of mind changes of each $\Phi^A_e(e)[-]$ below $\alpha$. Fix a uniformly computable series of functions $\langle\langle f^e_s \rangle_s \rangle_e$ from Fact~\ref{fact:enum} that enumerates the $\beta$-c.a.\ functions for all $\beta<\alpha$. Note that for every $e,x\in\omega$, the limit $f^e(x) :=\lim_s f^e_s(x)$ exists. Also, fix a computable bounding-function $g(e):\omega \to\alpha$ from Fact~\ref{fact:enum} which tells us that $f^e:= \lim_s f^e_s$ is $g(e)$-computably-approximable.\\

\underline{$Q$-module ($(\forall \beta<\alpha)\; [\neg \beta\text{-c.a.}]$):} Build an $A$-computable total function $\Delta^A$ that diagonalizes out of all the $\beta$-c.a.\ functions by satisfying positive requirements $\forall e\in\omega$:
\[Q_e: (\exists x)\; [\Delta^A(x) \neq \lim_s f^e_s(x)].\]

\noindent To meet a single $Q_e$, start by picking a follower $x$ for $Q_e$ and a large use $u$ for $x$. \emph{Diagonalize $\Delta^A$ out of $f^e$} by declaring $\Delta^{A\restriction u}(x)[s] =1-f^e_s(x)$. Wait for $f^e(x)$ to change its mind. Then $Q_e$ will want to \emph{act} by enumerating $u$ into $A$, picking a new use, and diagonalizing $\Delta^{A}$ out of $f^e$ with the new use. $Q_e$ needs permission from negative requirements $N$ before acting. We elaborate later on this permission. $Q_e$ will act if allowed, and will be initialized otherwise. After acting, $Q_e$ will wait for $f^e(x)$ to change its mind again. Then $Q_e$ will want to act again, and we repeat the process of seeking permission from $N$-requirements. Whenever $Q_e$ wants to act, regardless of whether $Q_e$ was allowed, all lower priority $Q>Q_e$ will be initialized.\\

\underline{$N$-module (Totally $\alpha$-c.a., Low):} To make $A$ totally $\alpha$-c.a.\ and low, we construct a partial computable mind change function $\phi(e):\omega\to\alpha$ to satisfy negative requirements $\forall e\in\omega$:
\[N_e: (\exists^\infty s)\; [\Phi^A_e(e)[s]\downarrow] \implies \left[ \phi(e)\downarrow \text{ and } \Phi^A_e(e)[-] \text{ changes mind } \leq \phi(e)\text{-times} \right].\]

\noindent To see how $N_e$ makes $A$ totally $\alpha$-c.a., let $h(e,x)$ be a total computable function such that for all strings $\sigma$ and $e,x\in\omega$, $\Phi^\sigma_e(x) =\Phi^\sigma_{h(e,x)}(h(e,x))$. Then if $\Phi^A_e(-)$ is total, $\phi(h(e,-)): \omega\to\alpha$ will be a total computable bounding function that witnesses $\Phi^A$ being $\alpha$-c.a.. Therefore the $N_e$ requirements above are enough to make $A$ totally $\alpha$-c.a..\\

\noindent To meet a single $N_e$, wait for $\Phi^A_e(e)$ to converge for the first time. We say that $N_e$ is \emph{active}. An active $N_e$ remains active forever, even if $\Phi^A_e(e)$ diverges later. Upon becoming active, $N_e$ decides once and for all which $Q$-requirements are allowed to injure $\Phi^A_e(e)$, and for each such $Q$, how much injury to tolerate. If $N_e$ tolerates $k$ initializations from $Q$, then the injury from $Q$ will be bounded by $g(Q)\cdot (k+1)$. Summing injury from all $Q$ gives a bound $\phi(e)$ of total injury on $N_e$.\\

\noindent For bookkeeping, at each stage $s$ after becoming active, $N_e$ maintains a list $\texttt{Qlist}(e,s)$ of $Q$-requirements allowed to injure $N_e$. Requirements may be removed from the list, but new requirements are never allowed to enter. $N_e$ allows $Q$ to act if $Q$ lies in $N_e$'s list.\\

\underline{Overall Strategy:} When $Q$ acts, $Q$ will enumerate its use into $A$, potentially injuring some active $N$. On the other hand, when $N$ first becomes active, say at stage $s$, $N$ must decide which $Q$ to put into its list, and then decide how many initializations to tolerate from each such $Q$. How can $N$ make these decisions? The overarching strategy is for $N$ to tolerate only injury from the highest priority positive requirement that still wants to act. Let that positive requirement be $Q$. This $Q$ is ``almost stabilized'' since all $Q'<Q$ are presumably satisfied even though $Q$ is not. We argue that the number of times that $Q$ will be initialized is bounded by $k$, the number of active negative requirements at stage $s$:\\

\noindent If $N'$ is active at stage $s$, then $N'$ cannot initialize $Q$ more than once, because after the first initialization, all $Q'>Q$ will be initialized and never allowed to injure $N$ again. On the other hand, if $N''$ is inactive at stage $s$, then $N''$ can never initialize $Q$, because when $N''$ becomes active later, $N''$ can declare to tolerate at least $k$-initializations from $Q$, which is generous enough by earlier argument.\\

\noindent Summarizing, because all $Q'>Q$ are initialized whenever $Q$ acts, $Q$ cannot be initialized more than $k$ times. The overall strategy is for $N$ to tolerate only injury from $Q$. So when $N$ first becomes active, $N$ will put in its list all the $Q$ that have a follower, and allow $k$-initializations from each such $Q$. If some $Q$ in the list is found to be initialized more than $k$-times, or has a higher priority $Q'<Q$ that wants to act, then $Q$ could not have been the amost stabilized requirement, and will never be allowed to injure $N$ again. Then by construction, $N$'s injury is bounded, and from above argument, $Q$'s injury is also bounded, so the construction succeeds.\\

Formally, let the current stage be $s$. Requirement $Q$ is the \emph{almost stabilized positive requirement of stage $s$} if $Q$ is the highest priority positive requirement that still wants to act at or after stage $s$. The $N$ and $Q$ strategies are as follows:\\

\underline{Initialize $N_e$:} Declare $N_e$ inactive.\\

\underline{$N_e$-strategy:} If $N_e$ is not yet active and $\Phi^A_e(e)[s]\uparrow$, then $N_e$ remains inactive and we do nothing. But if $N_e$ is active or $\Phi^A_e(e)[s]\downarrow$, declare $N_e$ active forever. Then if this is the first stage that $N_e$ is active, let
\[k =|\{N: N \text{ is active at stage } s\}|,\]
and set $\phi(e)$ and $\texttt{Qlist}(e,s)$:
\begin{align}
    \label{eq:qlist} \texttt{Qlist}(e,s) &=\{Q: Q \text{ has a follower at stage } s\},\\
    \label{eq:phi} \phi(e) &=\sum_{Q \in \texttt{Qlist}(e,s)}\; g(Q)\cdot (k+1).
\end{align}

\noindent Note that $\phi(e)<\alpha$ from Fact~\ref{fact:sum-power-omega}. In the above equations, if $Q\in\texttt{Qlist}(e,s)$, we say that \emph{$Q$ lies in the quota of $N_e$ at stage $s$}, and we also say that \emph{the quota for $Q$ from $N_e$ is $k$}. If at a later stage $s'>s$, $Q\in\texttt{Qlist}(e,s)$ is initialized more than $k$-times between stages $s$ and $s'$, we say that \emph{$Q$ has exhausted its quota from $N_e$ at stage $s'$}.\\

\noindent Now consider the case when $N_e$ was active before stage $s$, and let $s_0<s$ be the first stage that $N_e$ became active. Then $N_e$ will maintain its list by removing all $Q$ that cannot have been the almost stabilized positive requirement of stage $s_0$. These are the $Q$ requirements that were found to have a higher priority $Q'<Q$ wanting to act after stage $s_0$, or that have exhausted their quota from $N_e$:
\begin{align*}
    \texttt{Qlist}(e,s) &=\texttt{Qlist}(e,s-1)\\
    &-\{Q:\; (\exists Q'<Q)\; [Q' \text{ wanted to act at some stage between } s_0 \text{ and } s]\}\\
    &-\{Q: Q \text{ exhausted its quota from } N_e \text{ at stage } s\}.
\end{align*}

\noindent Finally if $N_e$ is active, $Q$-requirements may ask $N_e$ for permission to act. \emph{$N_e$ allows $Q$ to act at stage $s$} if $Q\in \texttt{Qlist}(e,s)$, or if $\Phi^A_e(e)[s]\uparrow$, or if the use of $Q$ exceeds $\USE(\Phi^A_e(e)[s])$.\\

\underline{Initialize $Q_e$:} Destroy the follower and use assigned to $Q_e$, if any. So the next time $Q_e$ is accessed, $Q_e$ is considered to have neither follower nor use.\\

\underline{$Q_e$-strategy:} If $Q_e$ does not have a follower, assign a new one that is the smallest number not yet in the domain of $\Delta^A$, and assign the follower a new large use. Let $x$ denote the current follower of $Q_e$ and let $u$ denote the current use of $x$. $Q_e$ will want to \emph{diagonalize $\Delta^A$ out of $f^e$} by declaring
$\Delta^{A\restriction u}(x)[s]= 1-f^e_s(x)$. Perform the diagonalization if $\Delta^{A\restriction u}(x)[s]$ was not already declared to be something else earlier, which might occur when $f^e(x)$ changed its mind since the most recent declaration of $\Delta^A(x)$.\\

\noindent Should contradiction occur, \emph{$Q_e$ will want to act} by enumerating $u$ into $A$, picking a new use $u'>u$, then diagonalizing $\Delta^A$ out of $f^e$ with the new use. \emph{$Q_e$ is allowed to act} if every active $N$, even those $N>Q_e$, allows $Q_e$ to act. Refer to the $N$-strategy for when $N$ allows $Q_e$ to act. $Q_e$ will act if allowed and will be initialized otherwise. If initialized, assign $Q_e$ a new follower and use before diagonalizing $\Delta^A$ out of $f^e$. As long as $Q_e$ wanted to act, regardless of whether $Q_e$ was allowed, initialize all $Q>Q_e$.\\

Play the $N$ and $Q$ strategies in the ``usual'' finite injury construction:

\begin{framed}
    At stage $s$, play the $N_0$ to $N_s$ strategies described above to maintain the lists of the active and newly active negative requirements. Then from step $e=0$ to $s$: Play the $Q_e$-strategy described above. If $Q_e$ had a follower and did not want to act, go to the next step. But if $Q_e$ did not have a follower or wanted to act, skip remaining steps and go directly to next stage.
\end{framed}

We now verify that the construction works.

\begin{lemma}
    For all $e\in\omega$, $N_e$ is satisfied. Therefore $A$ is totally $\alpha$-c.a.\ and low.
\end{lemma}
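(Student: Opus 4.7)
The plan is a finite-injury verification by simultaneous induction on $e$, establishing three claims: (i) $Q_e$ is initialized only finitely often and acts only finitely often; (ii) if $N_e$ ever activates, then $\texttt{Qlist}(e,\cdot)$ stabilizes to a finite set; and (iii) if $N_e$ ever activates, then $\Phi^A_e(e)[-]$ changes mind at most $\phi(e)$-times. Granting (iii), $N_e$ is satisfied; applying the standard composition with the total computable $h(e,x)$, the function $x\mapsto\phi(h(e,x))$ is a total computable $\alpha$-bound on the mind changes of $\Phi^A_e$ whenever $\Phi^A_e$ is total, so $A$ is totally $\alpha$-c.a. Lowness follows because $\phi$ is partial computable and each $\Phi^A_e(e)[-]$ undergoes only finitely many actual mind changes (by well-foundedness of $\mathcal{R}\restriction\phi(e)$): $\emptyset'$ first asks the $\Sigma_1$ question whether $\Phi^A_e(e)[s]\downarrow$ for some $s$, and if so searches for the least stage $s^*$ after which $\Phi^A_e(e)[-]$ never changes again (a $\Pi_1$ condition $\emptyset'$ can decide), then reads off the value at $s^*$.

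For the induction, assume (i)--(iii) for $j<e$, and fix a stage $s^*$ past which no $Q_j$ with $j<e$ is ever initialized, acts, or wants to act. If $N_e$ never activates there is nothing further to prove. Otherwise let $s_0\geq s^*$ be its first activation stage and $k$ the number of $N$ active at $s_0$, as used in Eq.~(\ref{eq:phi}). The almost-stabilized positive requirement $Q^\star=\mathrm{APR}(s_0)$, when it exists, is well-defined by priority among the nonempty set of $Q$'s still wanting to act. For (ii), every $Q\in\texttt{Qlist}(e,s_0)$ with $Q\neq Q^\star$ is eventually removed from the list: either some $Q'<Q$ wanting to act witnesses non-APR, or $Q$ itself exhausts its quota from $N_e$. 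For (iii), granting the combinatorial key claim below, any $Q$ remaining permanently in $\texttt{Qlist}(e,\cdot)$ has at most $k+1$ distinct followers over its lifetime, and for each follower $x$ the function $f^Q(x)$ changes its mind at most $|g(Q)|$-times by Fact~\ref{fact:enum}, producing at most $g(Q)\cdot(k+1)$ injuries to $\Phi^A_e(e)$. Summing over the finite stabilized list yields exactly $\phi(e)$, which is $<\alpha$ by Fact~\ref{fact:sum-power-omega}. Statement (i) for $Q_e$ then follows from (iii) applied to the finitely many $N_j$ whose lists can ever contain or disallow $Q_e$.

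The main obstacle is the combinatorial key claim that any $Q$ permanently in $\texttt{Qlist}(e,\cdot)$ is initialized at most $k$ times after $s_0$. The argument is a case split on which $N'$ can disallow $Q$. For $N'$ active at $s_0$: if $Q\in\texttt{Qlist}(N',\cdot)$ then $N'$ never disallows $Q$; if $Q\notin\texttt{Qlist}(N',\cdot)$, then after one initialization by $N'$ the fresh large use assigned to $Q$ permanently exceeds $\USE(\Phi^A_{N'}(N')[s_0])$ (the computation of $N'$ having been frozen by stabilization of higher-priority requirements), so $N'$ never disallows $Q$ again. For any $N''$ activating at some $s''>s_0$: at essentially every such stage $Q$ has a follower, so $Q$ is placed into $\texttt{Qlist}(N'',\cdot)$ with a quota $k''\geq k$, generous enough that $N''$ never has cause to disallow $Q$ during the finitely many remaining initializations; the edge case where $N''$ activates during the momentary gap between an initialization of $Q$ and its follower reassignment is handled by shifting attention to the next stage at which $Q$ holds a follower. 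Summing the ``at most one'' contribution over the $k$ negatives active at $s_0$ gives the bound, completing the verification.
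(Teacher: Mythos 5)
Your proof is correct in its conclusions, but you are doing substantially more work than this lemma requires. The paper's proof is simply ``Follows from construction,'' and that is justified: the construction fixes $\phi(e)$ the moment $N_e$ first activates, and from that stage onward $N_e$ only permits $Q$ to act if $Q\in\texttt{Qlist}(e,s)$, or $\Phi^A_e(e)[s]\uparrow$, or $Q$'s use exceeds $\USE(\Phi^A_e(e)[s])$ (the last two cases inflict no injury). The list maintenance removes any $Q$ exceeding its quota of $k$ initializations at the beginning of the next stage, before $Q$ can act with a fresh small use, so each $Q\in\texttt{Qlist}(e,s_0)$ injures $\Phi^A_e(e)$ at most $g(Q)\cdot(k+1)$-times and the total is bounded by $\phi(e)<\alpha$. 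No induction on $e$ and no combinatorics are needed here; the bound is enforced by the construction unconditionally. The rest follows as you say: $x\mapsto\phi(h(e,x))$ witnesses totally $\alpha$-c.a.\ when $\Phi^A_e$ is total, and lowness follows from the limit lemma once each $\Phi^A_e(e)[-]$ is forced to settle.

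Your ``combinatorial key claim'' --- that the almost-stabilized $Q$ is initialized at most $k$ times after $s_0$ --- is genuinely necessary, but not for \emph{this} lemma. It is exactly the content of the paper's \emph{next} lemma, which shows $Q_e$ eventually stops being initialized and stops wanting to act; without that, there is no guarantee $\Delta^A$ is total or that the diagonalizations succeed. The paper separates the two results cleanly: this lemma is a $\Pi_1$ verification that the construction never over-injures $N_e$; the next is the argument that $Q_e$ eventually escapes initialization. Folding them into one simultaneous induction does not make your argument wrong, but it obscures the fact that the $N_e$ side needs no induction and no appeal to near-stability. Two small inaccuracies worth flagging: in establishing (iii), the sum should range over all of $\texttt{Qlist}(e,s_0)$ rather than only the $Q$'s that remain permanently (every listed $Q$ may injure before it is removed, not just the survivors), and the resulting bound is $\leq\phi(e)$ rather than ``exactly $\phi(e)$.''
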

\begin{proof}
    Follows from construction.
\end{proof}

\begin{lemma}
    For all $e\in\omega$, $Q_e$ eventually stops being initialized and stops wanting to act. Therefore $A$ is not totally $\beta$-c.a.\ for all $\beta<\alpha$.
\end{lemma}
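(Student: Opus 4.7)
The plan is induction on $e$. By the induction hypothesis, every $Q_{e'}$ with $e' < e$ eventually stops being initialized and wanting to act; I fix a stage $s_0$ past all such activity. After $s_0$, no higher-priority positive requirement triggers an initialization of $Q_e$, so $Q_e$ is initialized only when it wants to act and is denied by some active $N$. Thus it suffices to bound the number of denials of $Q_e$ at stages after $s_0$.

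Assuming for contradiction that $Q_e$ is denied infinitely often, each denial forces $Q_e$ to pick a fresh large use, so the sequence of uses assigned to $Q_e$ grows without bound. The preceding lemma shows every $N$-requirement is satisfied; since the ordinal bound $\phi(N) < \alpha$ combined with well-foundedness forces the actual mind-change count to be finite, $\Phi^A_N(N)$ changes its mind only finitely often and hence $\USE(\Phi^A_N(N))$ stabilizes at some value $U_N$. Because any $N$-denial of $Q_e$ at a stage $t$ requires the use of $Q_e$ at $t$ to be $\leq \USE(\Phi^A_N(N)[t])$, and the use of $Q_e$ grows unboundedly, each fixed $N$ can deny $Q_e$ only finitely often.

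The main obstacle is controlling the denials from the infinitely many $N$'s that can in principle become active after $s_0$. Here the $\texttt{Qlist}$ mechanism does the work: when a new $N$ first becomes active at some stage $s_N > s_0$, if $Q_e$ has a follower at $s_N$ then $Q_e$ enters $\texttt{Qlist}(N, s_N)$ with quota $k_N$, so $N$ allows $Q_e$ until $Q_e$ exhausts that quota---and the initializations driving the exhaustion are charged to other $N$'s, not to $N$ itself. If $Q_e$ lacks a follower at $s_N$, then its next freshly picked use exceeds $\USE(\Phi^A_N(N)[s_N])$, and any subsequent denial by $N$ requires $\USE(\Phi^A_N(N))$ to grow past this new large value, which happens only at a mind change of $\Phi^A_N(N)$---an event that occurs only finitely often per $N$. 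A careful bookkeeping that assigns each denial either to a quota slot of an $N$ active at $s_0$ or to a mind-change event of some $\Phi^A_N(N)$ yields a finite bound, contradicting the assumption of infinitely many denials.

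Once $Q_e$ is initialized only finitely often, it has a final follower $x$ and use $u$. Since $\lim_s f^e_s(x) = f^e(x)$ exists, $f^e_s(x)$ stabilizes and $Q_e$ stops wanting to act. The resulting declaration $\Delta^A(x) = 1 - f^e(x) \neq f^e(x)$ then satisfies $Q_e$; as every $\beta$-c.a.\ function with $\beta < \alpha$ appears among the $f^e$'s by Fact~\ref{fact:enum}, $\Delta^A$ diagonalizes out of every such function, so $A$ is not totally $\beta$-c.a.\ for any $\beta < \alpha$.
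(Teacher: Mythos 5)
Your proof has the right skeleton---induction on $e$, fixing $s_0$ past higher-priority positive activity, then bounding denials---and it correctly recognizes that the two sources of denials are (i) $N$'s already active at $s_0$ and (ii) $N$'s that become active later. The final paragraph on satisfying $Q_e$ and the consequence for $\beta$-c.a.-ness is fine. But the central combinatorial step is missing: the sentence ``A careful bookkeeping that assigns each denial either to a quota slot of an $N$ active at $s_0$ or to a mind-change event of some $\Phi^A_N(N)$ yields a finite bound'' asserts exactly what has to be proved. The worry your bookkeeping has to rule out is a cascade: a few denials from $\mathcal{N}$ exhaust the quota that a newly activated $N'$ gave to $Q_e$, so $N'$ starts denying, which helps exhaust the quota of a yet-later $N''$, and so on without end. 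Your argument that each $N$ active at $s_0$ denies $Q_e$ only \emph{finitely often} (via stabilization of $\USE(\Phi^A_N(N))$) is not strong enough to block this cascade, because ``finitely often'' could already exceed the quota $k$ that a late $N'$ assigns.

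The paper's proof closes this gap with two sharper claims. First, each $N \in \mathcal{N}$ (active at $s_0$) initializes $Q_e$ \emph{at most once}: after $N$'s first denial, $Q_e$ picks a use exceeding $\USE(\Phi^A_N(N))$, and nothing can injure $\Phi^A_N(N)$ thereafter---not $Q_e$ (use too large), not $Q > Q_e$ (removed from $\texttt{Qlist}(N)$ when $Q_e$ wanted to act), not $Q' < Q_e$ (stabilized)---so the computation is finalized and $N$ never denies $Q_e$ again. Second, a first-offender argument shows no $N' \notin \mathcal{N}$ can ever initialize $Q_e$: if $N'$ were the first, then before $N'$ could remove $Q_e$ from its list, $Q_e$ would have to be initialized more than $k' \geq |\mathcal{N}|$ times since $N'$ became active, but $\mathcal{N}$ accounts for at most $|\mathcal{N}|$ of these, forcing an earlier offender outside $\mathcal{N}$---a contradiction. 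The ``at most once'' claim is essential here; without it the quota comparison fails. Also note that your case split on whether $Q_e$ has a follower when $N'$ activates is unnecessary: after $s_0$, $Q_e$ always has a follower, since the $Q_e$-strategy immediately assigns one upon initialization and only higher-priority positive requirements (now stabilized) can strip it.
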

\begin{proof}
    It is enough to show that every $Q_e$ eventually stops being initialized, because if $Q_e$ wants to act and is denied permission, then our construction will initialize $Q_e$. We prove by induction on $e\in\omega$.\\
    
    \noindent Wait for $Q_e$ to be almost stabilized. This eventually occurs from induction hypothesis. If $Q_e$ is never initialized again, we are done. So assume $Q_e$ is initialized again. Then from construction, $Q_e$ will be assigned a new follower at the next stage, say at stage $s_0$. Let $\mathcal{N}$ be the set of $N$-requirements that are active at stage $s_0$.\\
    
    \noindent Let $N\in\mathcal{N}$. We first show that $N$ cannot initialize $Q_e$ more than once: If $N$ never injures $Q_e$ again, we are done. So wait for $N$ to injure $Q_e$ again. Then the new use picked by $Q_e$ will exceed the use of $N$, meaning that if $N$ is injured again, that next injury cannot have been inflicted by $Q_e$. Yet the next injury also cannot be inflicted by any $Q>Q_e$, because all $Q>Q_e$ were removed from $\texttt{Qlist}(N,s_0+1)$ when $Q_e$ got initialized just before stage $s_0$. Finally, the next injury also cannot be inflicted by any $Q'<Q_e$ because no $Q'<Q_e$ ever acts again. Therefore computation $N$ is finalized, and the use of $Q_e$ will always exceed the final use of $N$, implying that $N$ will never initialize $Q_e$ again.\\
    
    \noindent Next, we show that if $N'\not\in\mathcal{N}$, then $N'$ can never initialize $Q_e$: Assume for contradiction that there exists a first $N'\not\in\mathcal{N}$ that initializes $Q_e$ after stage $s_0$. Note that after stage $s_0$, $Q_e$ will always have a follower since $Q$-requirements only lose their followers when a higher priority positive requirement gets initialized. Therefore when $N'$ first becomes active, say at stage $s_1>s_0$, $N'$ will put $Q_e$ into its list $\texttt{Qlist}(N',s_1)$, and allow $Q_e$ to be initalized more than $|\mathcal{N}|$-times. Let $s_2\geq s_1$ be the first stage that $N'$ initializes $Q_e$. For $N'$ to be able to initialize $Q_e$, $N'$ must have removed $Q_e$ from its list at some stage between $s_1$ and $s_2$. Since no $Q<Q_e$ acts again, $Q_e$ can only have been removed because $Q_e$ was initialized more than $|\mathcal{N}|$-times since stage $s_1$. Yet from earlier argument, $Q_e$ can only be initialized at most $|\mathcal{N}|$-times by all the elements in $\mathcal{N}$, meaning that the $|\mathcal{N}+1|$-th initialization must have been inflicted by some $N''\not\in\mathcal{N}$, $N''\neq N'$, contradicting the choice of $N'$.\\
    
    \noindent Therefore upon becoming almost stabilized, $Q_e$ cannot be initialized more than $|\mathcal{N}|$-times.
\end{proof}

\section{Nonlow, Totally $\alpha$-c.a.}
In this section, given $\alpha$ which is a power of $\omega$, we construct a c.e.\ set that is nonlow and totally $\alpha$-c.a.. We use the construction framework of the nonlow, array computable set $A$ in Theorem~\ref{thm:nonlow-ac}. By integrating the $Q$-module of Theorem~\ref{thm:low}, we increase the mind changes of $A$ to the desired $\alpha$.\\

\noindent Array computable sets are totally $\omega$-c.a., which by Facts~\ref{fact:alpha-not-beta} and \ref{fact:alpha-beta}, imply that they sit at the bottom of the mind change hierarchy. Furthermore, by the following definition and fact, not only do array computable sets change their minds less than $\omega$ times, they do so uniformly:

\begin{definition}[\cite{downey2015transfinite} III.D3.3] \label{def:unif-tot-alpha}
    Let $A$ have c.e.\ degree, and let $\alpha<\epsilon_0$. Then $A$ is \emph{uniformly totally $\alpha$-c.a.} if there exists a computable mind change function $m(x):\omega\to\alpha$ such that for every $B\leq_T A$, $B$ has an $\alpha$-c.a.\ approximation $\langle f_s(x),o_s(x)\rangle$ where $o_0(x)\leq m(x)$ for all $x\in\omega$.
\end{definition}

\begin{fact}[\cite{downey2015transfinite} III.L3.4] \label{lemma:ac}
    Let $A$ have c.e.\ degree. Then $A$ is \emph{array-computable} iff $A$ is uniformly totally $\omega$-c.a..
\end{fact}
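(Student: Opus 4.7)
The plan is to verify both directions of the equivalence, with the bulk of the work going into the forward direction. For the reverse direction, I would first observe that an $\omega$-c.a.\ approximation of $B$ with $o_0(x)\leq m(x)$ is literally the same thing as an $m$-bounded computable approximation of $B$: the mind-change function $o_s(x)$ takes values in $\omega$, is non-increasing, and strictly decreases at every mind change, so the number of mind changes of $f_-(x)$ is at most $o_0(x)\leq m(x)$. Therefore if $A$ is uniformly totally $\omega$-c.a.\ via computable $m$, then every $B\equiv_T A$ is in particular $\leq_T A$ and so inherits an $m$-bounded computable approximation, witnessing that the degree of $A$ is array computable.

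For the forward direction I would use the standard join trick. Assume the degree $\bm d$ of $A$ is array computable via computable $m$, and let $B\leq_T A$ be arbitrary. Form $C:=B\oplus A$; then $C\equiv_T A$, so $C\in \bm d$, and hence $C$ admits an $m$-bounded computable approximation $c_s(z)$. Restricting to the coordinates encoding $B$ gives a computable approximation $b_s(x):=c_s(2x+1)$ of $B$ whose number of mind changes at $x$ is bounded by $m(2x+1)$. The function $m'(x):=m(2x+1)$ is computable and, crucially, does not depend on $B$. Converting $b_s$ to an $\omega$-c.a.\ approximation by setting $o_0(x):=m'(x)$ and decrementing $o_s(x)$ at each mind change then verifies that $A$ is uniformly totally $\omega$-c.a.\ via $m'$.

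The main obstacle is ensuring the mind-change bound in the forward direction is uniform across all $B\leq_T A$ rather than depending on the particular Turing reduction used to compute $B$ from $A$. The join trick resolves this precisely because it places every such $B$ back inside the degree $\bm d$ as a coordinate of $B\oplus A$, letting the single computable witness $m$ for array computability of $\bm d$ serve as the common mind-change bound for all $B\leq_T A$ simultaneously.
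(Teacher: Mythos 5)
The paper does not prove this Fact; it cites it directly from Downey and Greenberg (III.L3.4), so there is no in-paper argument to compare against. Your reverse direction is fine: every $C\equiv_T A$ is in particular $\leq_T A$, and an $\omega$-c.a.\ approximation of $C$ with $o_0(x)\leq m(x)$ is exactly an $m$-bounded computable approximation of $C$, so the same $m$ witnesses array computability.

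Your forward direction has a gap. In the Downey--Greenberg framework, ``uniformly totally $\omega$-c.a.''\ (Definition~\ref{def:unif-tot-alpha}) quantifies over \emph{all} $B\leq_T A$, and following Definition~\ref{def:comp-approx} (stated for ``$\Delta^0_2$ functions'') and the form of the approximations $\langle f_s(x),o_s(x)\rangle$, these $B$ range over $\Delta^0_2$ functions, not just $\{0,1\}$-valued sets. The join $C=B\oplus A$ is a set only when $B$ is, so only in that special case does $C$ land in the degree $\bm d$ to which the array-computability hypothesis applies. The natural repair---replace a function $B$ by a Turing-equivalent coding set such as its graph or its strict lower set $\{\langle x,y\rangle : y<B(x)\}$---does not transfer the mind-change bound: pulling an $m$-bounded approximation of the coding set back to a computable approximation of $B(x)$ produces a number of mind changes controlled by the a priori unknown value $B(x)$ itself (the approximation can propose unboundedly many spurious candidates $y$ before stabilizing), so one does not obtain a computable bound in $x$ uniformly over $B$. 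Handling arbitrary functions $B\leq_T A$ is exactly the nontrivial content of the lemma; the known argument instead exploits the c.e.-ness of $A$ together with a domination-style characterization of array computability (an $\omega$-c.a.\ bound on settling times), which is used to speed up the canonical approximation $\Phi_e^{A_s}(x)[s]$. Your join trick bypasses this entirely and so cannot be a complete proof of the stated equivalence.
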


\noindent Since the set $A$ constructed in Theorem~\ref{thm:nonlow-ac} is nonlow and uniformly totally $\omega$-c.a., roughly speaking, by adding $\alpha$ more mind changes to $A$, $A$'s fickleness will increase to $\alpha$.

\subsection{Nonlow, Totally $\alpha$-c.a.} \label{sec:nonlow}

\begin{theorem} \label{thm:nonlow}
    Let $\alpha\leq \epsilon_0$ be a power of $\omega$. Then there exists a c.e.\ set $A$ that is totally $\alpha$-c.a.\ but not totally $\beta$-c.a.\ for any $\beta<\alpha$. Furthermore, $A$ can be made to be nonlow.
\end{theorem}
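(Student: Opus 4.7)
My plan is to fuse the tree construction of Theorem~\ref{thm:nonlow-ac} with the $Q$-module of Theorem~\ref{thm:low}, carrying three families of requirements. The $P_e$ requirements remain as in Theorem~\ref{thm:nonlow-ac}, building $\Gamma^A$ to force $K^A$ out of $\Delta^0_2$ and hence make $A$ nonlow. The $Q_e$ requirements are inherited from Theorem~\ref{thm:low}, building a further functional $\Delta^A$ that diagonalizes against every $\beta$-c.a.\ function for $\beta<\alpha$, thereby making $A$ not totally $\beta$-c.a.\ for any $\beta<\alpha$. The $N_e$ requirements now ask for a computable ordinal bound $\phi(e)<\alpha$ on the mind changes of $\Phi^A_e(e)[-]$; composing $\phi$ with the standard $h(e,x)$-trick of Theorem~\ref{thm:low} then promotes $\phi$ into a bounding function witnessing that $A$ is totally $\alpha$-c.a.

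I will keep the tree $\Lambda = \{\infty,\texttt{fin}\}^{<\omega}$ of Theorem~\ref{thm:nonlow-ac}, place $Q$-nodes on the tree alongside the $N$- and $P$-nodes in a priority ordering such as $N_0<P_0<Q_0<N_1<P_1<Q_1<\ldots$, and give each $Q$-node $\tau_e$ a single trivial outcome, since $\tau_e$ acts at most $g(e)<\alpha$ times. A $\tau$-node will request permission from every $\eta^\frown\infty \preceq \tau$ before enumerating a use, exactly as a $\rho$-node does, and will be initialized if denied. The construction loop mirrors the framed box of Section~\ref{sec:nonlow-low2}, except that the selection pool $\Theta$ is expanded to include eligible $Q$-nodes wishing to act.

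The quota system extends smoothly. Each $\eta(x)$ keeps its existing quota on $\rho$-nodes from Eq.~(\ref{eq:quota}), and additionally admits $\tau_e$ into its quota once $\langle \tau_e,g(e)\rangle < x$, allowing $\tau_e$ to inject up to $g(e)$ times into $\eta(x)$. This is generous enough because every $\tau_e$ then lies in the quota of cofinitely many $x$, and restrictive enough because the total injury on $\eta(x)$ is bounded by
\[
 \phi_\eta(x)\;\leq\;\Bigl(\textstyle\sum_{\rho\in\texttt{quota}(x)}\texttt{InjPow}_\eta(x,\rho)\Bigr) \;+\; \Bigl(\sum_{\tau_e\in\texttt{quota}(x)} g(e)\Bigr),
\]
where the first summand is a finite natural number by the edge-layer analysis of Section~\ref{sec:array-computable} and the second summand is a finite sum of ordinals strictly below $\alpha$. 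By Fact~\ref{fact:sum-power-omega} this total is itself an ordinal below $\alpha$, giving the desired $\phi(e)$. The satisfaction proofs of Main Lemma~1 carry over verbatim for $P$-nodes, and the $Q$-satisfaction argument is that of Theorem~\ref{thm:low} lifted to the tree (each $\tau_e$ is eventually not initialized by any $\eta^\frown\infty \preceq \tau_e$, then acts at most $g(e)$ times before succeeding).

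The main obstacle I expect is the generalization of Claim~\ref{claim:trigger}: once $\rho$ exhausts its quota from $x$ and picks a use at a $\rho$-correct stage, the node that ``triggers'' the eventual $\rho$-injury on $\eta(x)$ might be either another $\rho'$-node or a $\tau$-node. The same tree-geometry argument (using $<_L$, tree-incomparability, and $\rho$-correctness of $\eta(x)$) still forces the triggering node to extend $\rho^\frown\infty$, so the recursive bound on $\texttt{InjPow}_\eta(x,\rho)$ closes against the pooled ordinal budget contributed by deeper $\rho'$- and $\tau$-nodes. Summing these bounds over all $\rho,\tau \in \texttt{quota}(x)$ yields the needed computable $\phi(e)<\alpha$, and combining the three satisfaction proofs completes the theorem.
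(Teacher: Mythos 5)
The architecture you sketch (same tree, $P$-nodes unchanged, $Q$-nodes inherited with one trivial outcome, a combined $N$-module, and a recursive bound on $\texttt{InjPow}_\eta$ closed against a pooled ordinal budget) matches the paper's in broad outline, and the ``ordinal budget for $\tau$-injury, summed below $\alpha$ via Fact~\ref{fact:sum-power-omega}'' is the right shape for the final bound. But there is a concrete gap in your extension of Claim~\ref{claim:trigger}. You assert that the tree-geometry argument ``still forces the triggering node to extend $\rho^\frown\infty$'' even when the trigger is a $\tau$-node. That is false. The $\rho$-correctness check in both constructions examines only $P$-nodes $\rho''$ with $\rho''^\frown\infty\preceq\rho$ (and $\rho$ itself); $Q$-nodes are \emph{deliberately ignored} in correctness, because a finitary node on the tree is always treated as if it never acts again. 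So at the stage $s_0$ where $\rho$ picks its use $u$ at a $\rho$-correct stage, a $\tau\prec\rho$ can still be holding a small use $u_\tau<\USE(\Phi^A_\eta(x)[s_0])$. When $\tau$ later acts it does not initialize $\rho$ (the $\xi$-strategy only initializes $Q$-nodes extending $\xi$ and nodes to the right), so $\tau\prec\rho$ can trigger $\rho$'s later injury and yet does not extend $\rho^\frown\infty$. Your four-case $<_L$/incomparability argument disposes of the left, right, and $\succeq\rho^\frown\infty$ cases, but silently misses the $\tau\prec\rho$ case.

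Consequently your recursion on $\texttt{InjPow}_\eta(x,\rho)$ does not close as written: the edge-layer induction is only over $\rho'\succ\rho^\frown\infty$, while the $\tau$-triggers you need to absorb can live \emph{above} $\rho$. The paper handles exactly this by stating Claim~\ref{claim:trigger2} in the weaker form ``the trigger is either in $\texttt{Qlist}(\eta,x,t_\eta(x))$ or a $\rho'\succeq\rho^\frown\infty$,'' and then feeding the whole $\xi$-injury budget $\beta(\eta(x))$ as an \emph{additive term inside every level} of the recursion (Eq.~\eqref{eq:injpow2}), not just once at the top as your $\sum_{\tau_e}g(e)$ term does. One further point worth flagging: the paper does not use a static quota $\langle\tau_e,g(e)\rangle<x$ for $Q$-nodes; it uses a dynamic $\texttt{Qlist}$ with the ``almost-stabilized'' bookkeeping, giving each $\xi$ a budget $g(\xi)\cdot(k+1)$ where the $(k+1)$ factor accounts for re-followers after initialization and $k$ is computed from the array-computable injury bound $\sum(x+1)^24^{(x+1)^2}$. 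Because a $Q$-node cannot wait for $\eta(x)$ to become correct (it must keep $\Delta^A$ total, so it must always have a declared axiom), its injuries are not self-paced the way $\rho$'s are, and a static per-node cap needs a separate argument that $\tau_e$ still converges to a final follower; the paper's $\texttt{Qlist}$ sidesteps that. Fixing your proof requires (i) restating Claim~\ref{claim:trigger} to allow a $\tau\prec\rho$ trigger and (ii) pushing the $\tau$-budget into the $\texttt{InjPow}$ recursion at every edge layer, as in Eq.~\eqref{eq:injpow2}.
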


We use the tree framework of Theorem~\ref{thm:nonlow-low2}. To make $A$ nonlow, we use the $P$-module from Theorem~\ref{thm:nonlow-low2}, and to make $A$ not $\beta$-c.a.\ for any $\beta<\alpha$, we use the $Q$-module from Theorem~\ref{thm:low}, but played on a tree. Finally, to make $A$ totally $\alpha$-c.a., we combine the $N$-modules from Theorems~\ref{thm:nonlow-low2} and \ref{thm:low}:\\

\underline{$P$-module (Nonlow):} Exactly the same as the $P$-module of Theorem~\ref{thm:nonlow-low2}.\\

\underline{$Q$-module ($(\forall \beta<\alpha)\; [\neg \beta\text{-c.a.}]$):} Same as $Q$-module of Theorem~\ref{thm:low}, but played on a tree. We elaborate on the subtle differences from the finite injury construction later.\\

\noindent Note that even though $P$ and $Q$ both want to enumerate elements into $A$ to diagonalize out of the limit of functions, their module designs are very different for two reasons. First, the function $\Delta^A$ constructed by $Q$ needs to be total, but the function $\Gamma^A$ constructed by $P$ can be partial. Therefore when picking a new use, $P$ can wait for negative requirements $N$ to stabilize sufficiently first, but $Q$ cannot because $Q$ may never know when and if stability will occur. The moment $Q$ has a follower $x$, $Q$ must immediately declare that $\Delta^{A\restriction u}(x)$ converges with some use $u$, otherwise $\Delta^A(x)$ may diverge at the end of the construction. Because of $Q$'s impatience, $Q$ will injure $N$ requirements more often than $P$ does since $Q$ may pick uses that are too small to avoid inflicting future injuries.\\

\noindent But $Q$ redeems itself by informing $N$ requirements of the injury $g(Q)<\alpha$ to expect. $P$ on the other hand, cannot bound its injury computably. $Q$'s lack of pacing but increased informativeness explains the stark differences between the $P$ and $Q$ strategies introduced earlier.\\

\underline{$N$-module (Totally $\alpha$-c.a.):} For all $e\in\omega$, construct mind change function $\phi_e(-):\omega\to\alpha$ satisfying:
\[N_e: \Phi^A_e\; \text{total} \implies \left[\phi_e(-) \text{ total computable and } (\forall x)\; \left[\Phi^A_e(x)[-] \text{ changes mind } \leq \phi_e(x)\text{-times}\right] \right].\]

\noindent For every $x$, $N(x)$ works to protect computation $\Phi^A_e\restriction x$. Yet $N(x)$ must also offer $P$ and $Q$ requirements sufficient opportunities to act. Following the argument of the $N$ module in Theorem~\ref{thm:nonlow-low2}, $N(x)$ allows $P$ requirements to act via the quota system. Also, following the argument of the $N$ module in Theorem~\ref{thm:low} and Eq.~(\ref{eq:qlist}), $N(x)$ allows $Q$ requirements to act via the $\texttt{Qlist}$ system, but modified to fit a tree construction. We elaborate on the modifications later.\\

\noindent Note that unlike the $N$ requirements of Theorem~\ref{thm:low}, the series of mind change functions $\phi_e$ will not be uniformly computable because we are not trying to satisfy lowness. However for a fixed $e$, if $\Phi^A_e$ is total, then $\phi_e(-)$ must be total-computable.\\

\underline{Tree construction:} Order the requirements $N_0,P_0,Q_0,N_1,P_1,Q_1,\ldots$ so that a node $\delta$ in the tree $\Lambda$ works for $N_e$ if $|\delta|=3e$, works for $P_e$ if $|\delta|=3e+1$, and works for $Q_e$ if $|\delta|=3e+2$. Given $\delta\in\Lambda$, if $\delta$ works for $N_e$ we write $\delta=\eta_e$, and if $\delta$ works for $P_e$ we write $\delta=\rho_e$, and if $\delta$ works for $Q_e$ we write $\delta=\xi_e$. $\eta$ and $\rho$ nodes have two possible outcomes $\{\infty<\texttt{fin}\}$, while $\xi$ nodes, being of finite injury, have only one possible outcome.\\

\noindent Let the current stage be $s$. Given a node $\delta\in\Lambda$, depending on the type of requirement $\delta$ is working for, we initialize and design the $\delta$-strategy as follows:\\

\underline{Initialize $\rho$:} Destroy the follower and use assigned to $\rho$, if any. So the next time $\rho$ is visited, $\rho$ is considered to have neither follower nor use.\\

\underline{Outcome of $\rho$:} Exactly the same as the $\rho$-outcome of Theorem~\ref{thm:nonlow-low2}.\\

\underline{$(\rho^\frown\texttt{fin})$-strategy:} Exactly the same as the $(\rho^\frown\texttt{fin})$-strategy of Theorem~\ref{thm:nonlow-low2}.\\

\underline{$(\rho^\frown\infty)$-strategy:} Exactly the same as the $(\rho^\frown\infty)$-strategy of Theorem~\ref{thm:nonlow-low2}.\\

\underline{Initialize $\xi$:} Destroy the follower and use assigned to $\xi$, if any. So the next time $\xi$ is visited, $\xi$ is considered to have neither follower nor use.\\

\underline{Outcome of $\xi$:} $\xi$ only has one outcome since it inflicts only finite injury.\\

\underline{$\xi$-strategy:} Same as the $Q$-strategy in Theorem~\ref{thm:low}, except that if $\xi$ wants to act, $\xi$ only needs permission from all initial segments $\eta$ such that $\eta^\frown\infty \preceq\xi$. Permission is no longer needed from lower priority $\eta\succeq\xi$ because unlike the construction in Theorem~\ref{thm:low}, we are not trying to make the mind change function $\phi_e(-)$ to be uniformly computable.\\

\noindent If $\xi$ wants to act, initialize all $\xi'\succ\xi$ and $\delta>_L\xi$. If $\xi$ wants to act but was not allowed, initialize $\xi$, then assign $\xi$ new follower and large use, before diagonalizing $\Delta^A$ out of $f^\xi$. Given $k\in\omega$, we let $\xi(k)$ denote the sub-requirement of $\xi$ that works to let $\xi$ act $k$ times since the beginning of the construction.\\

\underline{Initialize $\eta$:} To keep track of the injury from $\rho$ nodes, for all $x\in\omega$, $\eta(x)$ is fixed a quota $\texttt{quota}(x)$, which is identical to Eq.~(\ref{eq:quota}) of Theorem~\ref{thm:nonlow-low2}. Also, to keep track of injury from $\xi$ nodes, for each $x,s'\in\omega$, $\eta$ maintains a list $\texttt{Qlist}(\eta,x,s')$ of $\xi$ nodes allowed to injure $\eta(x)$ at stage $s'$. Initialize $\texttt{Qlist}(\eta,x,s)$ to undefined for all $x\in\omega$.\\

\underline{Outcome of $\eta$:} Exactly the same as the $\eta$-outcome of Theorem~\ref{thm:nonlow-low2} -- $\eta$ has outcome $\infty$ iff $s$ is $\eta$-expansionary. Note that the definition of $\eta$-correctness considers only the $\rho$ nodes that are initial segments of $\eta$. In particular, the correctness of $\eta(x)$ will not be affected by the use of some $\xi\prec\eta$ being smaller than $\USE(\eta(x)[s])$, because $\xi$ nodes are finitary nodes, and in tree constructions, when accessing $\delta\succeq\xi$, we always assume that $\xi$ never acts again.\\

\underline{$(\eta^\frown\texttt{fin})$-strategy:} Do nothing.\\

\underline{$(\eta^\frown\infty)$-strategy:} Some $\rho \succeq \eta^\frown\infty$ might ask $\eta$ for permission to act via picking use or via enumeration. $\eta$'s strategy for granting permission is exactly the same as the $(\eta^\frown\infty)$-strategy of Theorem~\ref{thm:nonlow-low2}. Note again that the $\xi$ nodes are irrelevant when determining $\rho$-correctness of $\eta(x)$ computations.\\

\noindent Also, some $\xi\succeq \eta^\frown\infty$ may ask for permission to act. $\eta$'s strategy for granting permission is similar to the $N$-strategy of Theorem~\ref{thm:low}, but modified for a tree construction. The main idea is still for $\eta$ to tolerate only actions from the ``almost stabilized'' $\xi$ node. Formally, the \emph{the almost stabilized node of stage $s$} is the unique node $\xi$ such that $\xi\prec\delta_\omega$, $\xi$ is the highest priority node that still wants to act at or after stage $s$, and such that no $\delta\prec\xi$ is initialized again after stage $s$. Like before, we can computably bound the number of initializations from the almost stabilized $\xi$. However this bound is more difficult to calculate because of the tree construction, so we only sketch the idea here, and flesh out the details later in Main Lemma 2.\ref{n:Q-init2}:\\

\noindent Let $\eta'^\frown\infty \preceq \xi$, $x<l_s(\eta')$, and consider the number of times that $\eta'(x)$ will initialize the almost stabilized $\xi$. Wait for $\xi$ to be initialized again. Then all $\xi'\succ\xi$ and $\xi'>_L\xi$ will be initialized and never allowed to injure $\eta'(x)$ again. Then by near stability of $\xi$, the only positive nodes that can injure $\eta'(x)$ are $\xi$ itself and those $\rho\in\texttt{quota}(x)$. So after $\xi$ exhausts its quota from $\eta'(x)$, $\eta'(x)$ will behave like in the proof of Theorem~\ref{thm:nonlow-ac}, where there are no $Q$ nodes to complicate the injury count of $\eta'(x)$. Thus the number of times that $\eta'(x)$ gets injured will be bounded by $(x+1)^24^{(x+1)^2}$, as computed in Eq.~(\ref{eq:injpow-tot}), implying that $\xi$ cannot be initialized by $\eta'(x)$ more than $(x+1)^24^{(x+1)^2}$ times. Summing the initializations from $\eta'(x)$ for all $\eta'^\frown\infty \preceq\xi$ and all $x<l_s(\eta')$ gives a computable bound 
\begin{equation}
    \label{eq:k2} k'(\xi,s) :=\sum_{\substack{\eta'^\frown\infty \preceq \xi,\\ x<l_s(\eta')}} (x+1)^24^{(x+1)^2}
\end{equation}
of total initializations of $\xi$ after stage $s$.\\

\noindent Like in the finite injury construction, $\eta(x)$ maintains a list $\texttt{Qlist}(\eta,x,s)$ of $\xi$ nodes that are potentially the almost stabilized node of stage $s$, routinely removing nodes that are later found to not have been the almost stabilized node. And for each $\xi$ in the list, $\eta(x)$ will assume that $\xi$ is the almost stabilized node, and tolerate injury from $\xi$ up to the $k'$ bound computed earlier.\\

\noindent Formally, for every $x<l_s(\eta)$, we set or update $\texttt{Qlist}(\eta,x,s)$ as follows: If the current stage $s$ is the first $(\eta^\frown\infty)$-stage, or if there was a most recent $(\eta^\frown\infty)$-stage $s_0<s$ but $\eta$ was initialized between stage $s_0$ and $s$, or if $\texttt{Qlist}(\eta,x,s_0)$ has not yet been defined, set
\begin{align}
    \label{eq:qlist2} \texttt{Qlist}(\eta,x,s) &=\{\xi\succeq \eta^\frown\infty: \xi \text{ has a follower at stage } s\}.
\end{align}

\noindent Then following earlier argument, $\eta(x)$ will allow $k(\eta(x),s)$ initializations from each $\xi\in\texttt{Qlist}(\eta,x,s)$, where
\begin{align}
    \label{eq:k3} k(\eta(x),s) :=\max \left\{k'(\xi,s):\; \xi\in\texttt{Qlist}(\eta,x,s)\right\}.
\end{align}

\noindent For each $\xi\in\texttt{Qlist}(\eta,x,s)$, we say that \emph{$\xi$ lies in the quota of $\eta(x)$ at stage $s$}, and that \emph{the quota for $\xi$ from $\eta(x)$ is $k(\eta(x),s)$}. If at a later stage $s'>s$, $\xi\in\texttt{Qlist}(\eta,x,s)$ is initialized more than $k(\eta(x),s)$-times between stages $s$ and $s'$, then we say that \emph{$\xi$ has exhausted its quota from $\eta(x)$ at stage $s'$}.\\

\noindent Now if $\eta$ was not initialized between the most recent $(\eta^\frown\infty)$-stage $s_0<s$ and the current stage $s$, and $\texttt{Qlist}(\eta,x,s_0)$ is already defined, then update $\eta(x)$'s list by removing all the $\xi$ nodes that cannot have been the almost stabilized node of stage $s_0$. These are the $\xi$ nodes that have either exhausted their quota from $\eta(x)$, or have been initialized by some positive node $\prec\xi$ or by some $\delta<_L\xi$:
\begin{align} \label{eq:qlist-update}
    \texttt{Qlist}(\eta,x,s) &=\texttt{Qlist}(\eta,x,s_0)\\ \nonumber
    &-\{\xi:\; \xi \text{ exhausted its quota from } \eta(x) \text{ at stage } s\}\\ \nonumber
    &-\{\xi:\; (\exists \xi'\prec\xi)\; [\xi' \text{ wanted to act at some stage between } s_0 \text{ and } s]\}\\ \nonumber
    &-\{\xi:\; (\exists \rho\prec\xi)\; [\rho \text{ was initialized between stages } s_0 \text{ and } s]\}\\ \nonumber
    &-\{\xi:\; (\exists \delta<_L\xi)\; [\text{there is a } \delta\text{-stage between } s_0 \text{ and } s]\} \nonumber
\end{align}

\noindent Finally, given $x<l_s(\eta)$ and $\xi\succeq\eta^\frown\infty$, we say that \emph{$\eta(x)$ allows $\xi$ to act at stage $s$} if $\xi\in \texttt{Qlist}(\eta,x,s)$, or if $\Phi^A_\eta(x)[s]\uparrow$, or if $\xi$'s use exceeds the $\text{use} \left(\Phi^A_\eta(x)[s] \right)$. Then, \emph{$\eta$ allows $\xi$ to act at stage $s$} if for all $x<l_s(\eta)$, $\eta(x)$ allows $\xi$ to act at stage $s$.\\

Play the $\rho$, $\xi$, and $\eta$ strategies on a tree to construct the nonlow, totally $\alpha$-c.a.\ set $A$:

\begin{framed}
    \underline{Stage $s$:} Let $\delta_{s,0}$ be the empty node. From step $e=0$ to $s$: Determine the outcome $o$ of $\delta_{s,e}$. Set $\delta_{s,e+1} =\delta_{s,e}^\frown o$, and initialize all nodes to the right of $\delta_{s,e+1}$. If $\delta_{s,e}^\frown o=\rho^\frown\texttt{fin}$, play the $(\rho^\frown\texttt{fin})$-strategy to diagonalize $\Gamma^A$ out of $\psi^\rho$. If $\delta_{s,e}^\frown o =\eta^\frown\infty$, play the $(\eta^\frown\infty)$-strategy described above to set or to maintain $\texttt{Qlist}(\eta,x,s)$.\\
    
    \noindent At the end of step $s$, we will get a node $\delta_s :=\delta_{s,s} \in\Lambda$ of length $s$. Some of the initial segments belonging to positive nodes may want to act. Let
    \begin{align*}
        \Theta &=\{\rho\preceq\delta_s:\; \rho^\frown\infty \preceq \delta_s, \text{ and } \rho \text{ wants to pick use at stage } s \text{ and is allowed}\}\\
        &\cup\{\rho\preceq\delta_s:\; \rho^\frown\infty \preceq \delta_s, \text{ and } \rho \text{ wants to enumerate use at stage } s\}\\
        &\cup\{\xi\preceq\delta_s:\; \xi \text{ wants to act at stage } s\}.
    \end{align*}
    
    \noindent If $\Theta$ is empty, go to the next stage. Otherwise, select one $\delta\in\Theta$. This $\delta$ will be called \emph{the selected node at stage $s$}, and is selected as:
    \[\delta =\underset{\delta'\in\Theta}{\mathrm{argmin}}\; \{\langle \delta',k\rangle:\; \delta' \text{ has been selected } k \text{-times before stage } s\}.\]
    If $\delta=\xi$, play the $\xi$-strategy described above. If $\delta=\rho$, play the $(\rho^\frown\infty)$-strategy described above. Go to next stage.
\end{framed}

\begin{lemma} \label{lemma:delta-omega2}
    $\delta_\infty \in[\Lambda]$.
\end{lemma}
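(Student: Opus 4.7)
The plan is a standard König's lemma / leftmost-path argument, directly parallel to the proof of Lemma~\ref{lemma:delta-omega}. The two ingredients we need are: (i) that the lengths $|\delta_s|$ are unbounded, and (ii) that $\Lambda$ is finitely branching. For (i), the construction iterates steps $e = 0, \ldots, s$ at stage $s$, so $|\delta_s| = s$. For (ii), each $\eta$- and $\rho$-node has two immediate successors (outcomes $\infty$ and $\texttt{fin}$), and each $\xi$-node has a single immediate successor, so every node of $\Lambda$ has at most two children.

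I would then prove by induction on $n$ that $\delta_\omega \restriction n$ is defined and lies on the true path. The base case $n = 0$ is the empty node, which is trivially accessed at every stage with no node to its left. For the inductive step, assume $\delta := \delta_\omega \restriction n$ is accessed infinitely often while every $\delta' <_L \delta$ is accessed only finitely often. Because $|\delta_s| \to \infty$, the node $\delta$ is not a terminal point of the accessed path at cofinitely many $\delta$-stages, so some immediate successor of $\delta$ is accessed infinitely often. Since $\delta$ has at most two immediate successors, we may pick the leftmost such successor $\delta^\frown o$, which by definition is accessed infinitely often.

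It remains to check that every node strictly to the left of $\delta^\frown o$ is accessed only finitely often. Any such node $\delta'$ either satisfies $\delta' <_L \delta$ (handled by the induction hypothesis) or extends some $\delta^\frown o'$ with $o' <_L o$; in the latter case $\delta^\frown o'$ was accessed only finitely often by the leftmost choice of $o$, and hence so are all of its extensions. Thus $\delta^\frown o$ satisfies the definition of lying on the true path, completing the induction. Since $\delta_\omega \restriction n$ exists for every $n$, we conclude $\delta_\omega \in [\Lambda]$.

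There is essentially no obstacle here; the only thing to verify carefully is that the modifications to the construction (adding $\xi$-nodes with their single outcome, maintaining $\texttt{Qlist}(\eta,x,s)$) do not affect the branching count of $\Lambda$ nor the fact that $|\delta_s| = s$. Both are immediate from inspection of the boxed construction.
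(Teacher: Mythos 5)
Your proof is correct and follows the same approach as the paper, which simply refers back to the proof of Lemma~\ref{lemma:delta-omega}: $|\delta_s|$ is increasing (in fact $|\delta_s|=s$) and $\Lambda$ is finitely branching, so the leftmost infinitely-accessed path is infinite. You spell out the standard König-style induction that the paper leaves implicit; no gap.
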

\begin{proof}
    Same as proof of Lemma~\ref{lemma:delta-omega}.
\end{proof}

\begin{mainlemma2*}
    Given $n\in\omega$, let $\delta=\delta_\omega\restriction n \in \Lambda$.
    \begin{enumerate}
        \item \label{n:P-init2} If $\delta=\rho$ then $\rho$ eventually stops being initialized. Thus $\rho$ has a final follower $y_\rho$.
        \item \label{n:P-fin2} If $\delta=\rho$ and $\rho^\frown\texttt{fin} \prec\delta_\omega$, then if $\lim_s \psi^\rho_s(y_\rho)$ exists the limit will not equal $\chi_{\Gamma^A}(y_\rho)$.
        \item \label{n:P-inf2} If $\delta=\rho$ and $\rho^\frown\infty \prec\delta_\omega$, then $\rho$ will act infinitely often. Thus $\lim_s \psi^\rho_s(y_\rho)$ does not exist.
        \item \label{n:Q-init2} If $\delta=\xi$ then $\xi$ eventually stops being initialized. Thus $\xi$ has a final follower $z_\xi$.
        \item \label{n:Q} If $\delta=\xi$ then $\xi$ eventually stops wanting to act. Thus $\lim_s f^\xi(z_\xi) \neq \Delta^A(z_\xi)$.
        \item \label{n:N-init2} If $\delta=\eta$ then $\eta$ eventually stops being initialized.
        \item \label{n:N-fin2} If $\delta=\eta$ and $\eta^\frown\texttt{fin} \prec\delta_\omega$, then $\Phi^A_\eta$ is not total.
        \item \label{n:N-inf2} If $\delta=\eta$ and $\eta^\frown\infty \prec\delta_\omega$, then $\Phi^A_\eta$ is total, and there is computable function $\phi_\eta(-):\omega\to\alpha$ such that for all $x$, $\Phi^A_\eta(x)[s]$ changes its mind less than $\phi_\eta(x)$ times during the $\eta$-stages.
    \end{enumerate}
\end{mainlemma2*}

\vspace{2em}
\noindent We prove Main Lemma 2 after this immediate corollary:

\begin{corollary} \label{cor:A-nonlow-alpha}
    $A$ is nonlow and properly totally $\alpha$-c.a..
\end{corollary}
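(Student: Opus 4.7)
The plan is to derive each of the three assertions of the corollary---nonlow, totally $\alpha$-c.a., and not totally $\beta$-c.a.\ for any $\beta<\alpha$---directly from Main Lemma~2, in parallel with the proof of Corollary~\ref{cor:A-nonlow-low2} and the analogous verifications in Section~\ref{sec:low}. For nonlowness I would argue just as in Corollary~\ref{cor:A-nonlow-low2}: for each $e\in\omega$ let $\rho\prec\delta_\omega$ be the node working for $P_e$, well-defined by Lemma~\ref{lemma:delta-omega2}, with final follower $y_\rho$ supplied by Main Lemma~2.\ref{n:P-init2}. If $\rho^\frown\texttt{fin}\prec\delta_\omega$, Main Lemma~2.\ref{n:P-fin2} satisfies $P_e$ (either at witness $y_\rho$ or vacuously); if $\rho^\frown\infty\prec\delta_\omega$, Main Lemma~2.\ref{n:P-inf2} forces the limit to diverge and $P_e$ is again vacuous. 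Hence $\Gamma^A$ is $A$-c.e.\ but differs from every $\Delta^0_2$ function, so $K^A\not\leq_T\emptyset'$ and $A$ is nonlow.

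For the totally $\alpha$-c.a.\ half, let $B=\Phi^A_e\leq_T A$ with $\Phi^A_e$ total, and let $\eta\prec\delta_\omega$ be the node working for $N_e$. By Main Lemma~2.\ref{n:N-fin2}, $\eta^\frown\texttt{fin}\prec\delta_\omega$ would force $\Phi^A_e$ to be non-total; thus $\eta^\frown\infty\prec\delta_\omega$, and Main Lemma~2.\ref{n:N-inf2} furnishes a computable $\phi_\eta:\omega\to\alpha$ bounding the mind changes of $\Phi^A_e(x)[-]$ along the set of $\eta$-stages (which is computable once the finite string $\eta$ is fixed, and is cofinal since $\eta\prec\delta_\omega$). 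Embedding the values $\phi_\eta(x)<\alpha$ into a canonical $\mathcal{R}$ of order type $\alpha$ then gives an $\mathcal{R}$-computable-approximation of $B$, so $B$ is $\alpha$-c.a.\ and consequently the degree of $A$ is totally $\alpha$-c.a.

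For the non-$\beta$-c.a.\ half, I would take $\Delta^A$ as the diagonalizing witness; it is $A$-computable as a functional by construction. For each $e\in\omega$ let $\xi\prec\delta_\omega$ be the node working for $Q_e$; Main Lemma~2.\ref{n:Q-init2} and~\ref{n:Q} supply a stable final follower $z_\xi$ with $\Delta^A(z_\xi)\neq\lim_s f^\xi_s(z_\xi)$. Since Fact~\ref{fact:enum} makes $\langle f^e\rangle_e$ enumerate every $\beta$-c.a.\ function for all $\beta<\alpha$, the function $\Delta^A$ differs from each such function---\emph{provided} $\Delta^A$ is total. Verifying totality is the main obstacle. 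The plan is to argue by induction on $n\in\omega$ that $n$ eventually becomes the permanent follower $z_\xi$ of some $\xi\prec\delta_\omega$: wait for every true-path $Q$-node of higher priority than ``the eventual holder of $n$'' to stabilize its follower via Main Lemma~2.\ref{n:Q-init2}; observe that any transient off-path $\xi''$ momentarily holding $n$ must itself be initialized and so release $n$ from the domain of $\Delta^A$; finally, invoke the smallest-unused-number follower-assignment rule to force $n$ to be claimed by the next true-path $Q$-node that requests a follower, using the fact that infinitely many $\xi\prec\delta_\omega$ need followers. Combining this totality with the diagonalization shows $\Delta^A\leq_T A$ is not $\beta$-c.a.\ for any $\beta<\alpha$, completing the proof.
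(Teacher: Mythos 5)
Your proof takes essentially the same approach as the paper's, which is a one-line citation of the relevant items of Main Lemma~2 for the $P_e$, $Q_e$, and $N_e$ requirements in turn. You are in fact more thorough than the paper: the remark that $\Delta^A$ must be checked total is a real point the paper glosses over (Main Lemma~2.\ref{n:Q} only asserts convergence at the final followers $z_\xi$, not everywhere).

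One small inaccuracy in your totality sketch: initializing an off-path $\xi''$ does \emph{not} ``release $n$ from the domain of $\Delta^A$.'' Initialization discards $\xi''$'s follower and use, but the axiom $\Delta^{A\restriction u}(n)[s]\downarrow$ once declared persists in the functional; $n$ leaves $\dom\Delta^A$ only when some element below $u$ is actually enumerated into $A$, destroying that computation. The argument you want is slightly different and a little simpler: once declared, $\Delta^A(n)$ stays convergent unless injured by an enumeration; when injured, the ``smallest number not yet in the domain'' rule guarantees $n$ is promptly re-declared by whichever $\xi$ next needs a follower; and after the finitely many positive nodes of higher priority than $n$'s eventual permanent holder stabilize (Main Lemma~2.\ref{n:P-init2}, 2.\ref{n:Q-init2}), no further enumeration can be small enough to injure $\Delta^A(n)$, so it converges permanently. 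With that correction, your argument is sound and the rest (nonlowness via the $P$-nodes, totally $\alpha$-c.a.\ via $\phi_\eta$ read off Main Lemma~2.\ref{n:N-inf2}) mirrors the paper's intent exactly.
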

\begin{proof}
    Let $e\in\omega$ be arbitrary. By the same argument as Corollary~\ref{cor:A-nonlow-low2}, $P_e$ is satisfied. From Main Lemma 2.\ref{n:Q-init2} and 2.\ref{n:Q}, $Q_e$ is satisfied. Finally, from Lemmas~\ref{lemma:delta-omega2}, Main Lemma 2.\ref{n:N-fin2} and 2.\ref{n:N-inf2}, $N_e$ is satisfied.
\end{proof}

\vspace{2em}
We prove the \ref{n:N-inf2} claims of Main Lemma 2 by simultaneous induction on $n$. Like in the verification of Main Lemma 1, in each claim for $\delta\in\Lambda$, always assume that we are working in $\delta$-stages after higher priority requirements or sub-requirements have stabilized. These stages exist from induction hypothesis. Refer to the proof of Main Lemma 1 for what it means for $\rho$, $\eta$, or $\eta(x)$ requirements to have stabilized at stage $s$. As for $\xi$ nodes, \emph{$\xi$ has stabilized at stage $s$} if $\xi$ never wants to act again at or after stage $s$. If $\delta=\rho$ or $\delta=\xi$, then given $k\in\omega$, sub-requirement \emph{$\delta(k)$ has stabilized at stage $s$} if for all $\langle \delta',k'\rangle <\langle \delta,k\rangle$, if $\delta'$ is ever selected $k'$ times or less since the beginning of the construction, then these selections have already been made by stage $s$.\\

\begin{claim*}[Main Lemma 2.\ref{n:P-init2}]
    If $\rho\prec \delta_\omega$, then $\rho$ eventually stops being initialized.
\end{claim*}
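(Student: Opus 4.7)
The plan is to adapt the proof of Main Lemma 1.\ref{n:P-init}, where the only genuinely new phenomenon is the presence of $\xi$-nodes. I argue by the simultaneous induction on $n$, so I may assume that Main Lemma 2.\ref{n:P-init2}--\ref{n:N-inf2} already hold for every proper initial segment of $\rho$. First I wait for every $\delta' \prec \rho$ to stabilize (invoking 2.\ref{n:P-init2}, 2.\ref{n:Q-init2}, and 2.\ref{n:N-init2}) and for every $\delta <_L \rho$ to cease being accessed, which must happen since $\rho \prec \delta_\omega$. Crucially, I also wait for every $\xi \prec \rho$ to stop wanting to act, which is given by 2.\ref{n:Q}.

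If $\rho^\frown\texttt{fin} \prec \delta_\omega$ then eventually $\rho$ never wants to act again, so the $(\rho^\frown\infty)$-strategy is never executed again and no $\eta(x)$ ever initializes $\rho$; combined with the waiting above this yields stability. So I assume $\rho^\frown\infty \prec \delta_\omega$. Once past the waiting, the only surviving source of initialization for $\rho$ is some $\eta(x)$ with $\eta^\frown\infty \preceq \rho$ and $\rho \notin \texttt{quota}(x)$ disallowing $\rho$ from enumerating its use: a stabilized $\rho' \prec \rho$ cannot itself be disallowed (that would re-initialize $\rho'$), no $\xi \prec \rho$ wants to act any more, no $\xi$-node extending $\rho^\frown\infty$ initializes an initial segment of itself, and left-of access has ceased. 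By the definition of $\texttt{quota}(x)$, $\rho \in \texttt{quota}(x)$ for all sufficiently large $x$, so only finitely many pairs $(\eta,x)$ remain.

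Next I apply 2.\ref{n:N-inf2} to wait for each of these finitely many $\eta(x)$ to stabilize. After at most one further initialization from each, $\rho$ is re-assigned a follower together with a fresh large use exceeding $\USE(\Phi^A_\eta(x))$ for every relevant $\eta(x)$ simultaneously. From then on each such $\eta(x)$ permits $\rho$'s enumerations and never initializes $\rho$ again.

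The hard part, compared with the pure $\rho$/$\eta$ setting of Main Lemma 1.\ref{n:P-init}, will be verifying that the $\xi$-nodes do not open a new channel of initialization for $\rho$. This is precisely why 2.\ref{n:Q} is folded into the simultaneous induction: once every $\xi \prec \rho$ has stabilized in terms of wanting to act, the bookkeeping for $\rho$-initialization collapses back onto the finite-quota analysis already verified in Main Lemma 1.\ref{n:P-init}.
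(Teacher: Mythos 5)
Your proof is correct and follows the same route as the paper, which simply declares the argument identical to that of Main Lemma 1.\ref{n:P-init} with the remark that $\xi$-nodes do not affect it; you usefully spell out why (a $\xi$ wanting to act only initializes $\xi'\succ\xi$ and nodes to its right, never a $\rho\succ\xi$, and 2.\ref{n:Q} silences all $\xi\prec\rho$ eventually), but the core mechanism — finitely many $\eta(x)$ with $\rho\notin\texttt{quota}(x)$, wait for them to stabilize, absorb one more initialization each, then the fresh large use clears every relevant $\USE(\Phi^A_\eta(x))$ — is exactly the paper's.
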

\begin{proof}
    Same as the proof for Main Lemma 1.\ref{n:P-init}. $\xi$ nodes do not affect the argument.
\end{proof}

\begin{claim*}[Main Lemma 2.\ref{n:P-fin2}]
    If $\rho^\frown\texttt{fin} \prec \delta_\omega$, then if $\lim_s \psi^\rho_s(y_\rho)$ exists the limit will not equal $\chi_{\Gamma^A} (y_\rho)$.
\end{claim*}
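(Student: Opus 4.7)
The plan is to reduce this to the one-line argument that worked for Main Lemma 1.\ref{n:P-fin}, after checking that the presence of $\xi$-nodes does not change the relevant logic. First I would invoke Main Lemma 2.\ref{n:P-init2} to fix a stage $s_0$ past which $\rho$ is never initialized again and so has its final follower $y_\rho$. The construction for $\rho$ is textually identical to the one in Theorem~\ref{thm:nonlow-low2}: $\xi$-nodes never set the outcome of $\rho$ nor pick/enumerate the use of $\rho$, and in particular $\chi_{\Gamma^A}(y_\rho)[s]$ is controlled entirely by $\rho$'s own actions.

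The key observation (read off directly from the outcome rules for $\rho$) is that at every $\rho$-stage $s \geq s_0$,
\[
\rho\text{ has outcome }\infty\text{ at stage }s\iff \psi^\rho_s(y_\rho) = \chi_{\Gamma^A}(y_\rho)[s],
\]
since ``$\rho$ has an assigned use $u$'' is exactly the condition under which $\Gamma^{A\restriction u}(y_\rho)$ has been declared to converge to $1$, and ``$\rho$ has no assigned use'' is exactly the condition under which $\Gamma^A(y_\rho)$ is divergent and so $\chi_{\Gamma^A}(y_\rho)[s]=0$. Contrapositively, at every $\rho$-stage with outcome $\texttt{fin}$ we have $\psi^\rho_s(y_\rho)\neq \chi_{\Gamma^A}(y_\rho)[s]$.

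Since $\rho^\frown\texttt{fin}\prec\delta_\omega$, for cofinitely many $\rho$-stages $s$ the outcome of $\rho$ is $\texttt{fin}$, so the inequality above holds at cofinitely many $\rho$-stages. Moreover, $\rho$ only acts when selected at a stage where its outcome is $\infty$, so $\rho$ acts at most finitely often, and hence the assigned use of $\rho$ (and therefore $\chi_{\Gamma^A}(y_\rho)$) stabilizes to some value $v\in\{0,1\}$. Consequently, for cofinitely many $\rho$-stages we have $\psi^\rho_s(y_\rho)\neq v$; if $\lim_s\psi^\rho_s(y_\rho)$ exists at all, its value must differ from $v=\chi_{\Gamma^A}(y_\rho)$, as required.

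There is essentially no obstacle here: all the work is already done by Main Lemma 2.\ref{n:P-init2} and by the fact that the $\rho$-strategy has been copied verbatim from Theorem~\ref{thm:nonlow-low2}. The only thing one must be slightly careful about is to note that $\xi$-nodes are invisible to the outcome computation for $\rho$ and do not touch $\Gamma^A$, so the equivalence ``outcome $\infty$ iff $\psi^\rho_s(y_\rho)=\chi_{\Gamma^A}(y_\rho)[s]$'' continues to hold in the present construction.
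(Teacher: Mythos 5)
Your proposal is correct and follows exactly the approach of the paper: the paper's proof is literally "Same as the proof for Main Lemma 1.\ref{n:P-fin}; $\xi$ nodes do not affect the argument," where Main Lemma 1.\ref{n:P-fin}'s proof is the one-line observation that $\rho$ chooses outcome $\infty$ the moment $\psi^\rho(y_\rho)$ agrees with $\chi_{\Gamma^A}(y_\rho)$. You have simply unpacked that one-liner — noting the equivalence between outcome $\infty$ and $\psi^\rho_s(y_\rho)=\chi_{\Gamma^A}(y_\rho)[s]$, and that $\xi$-nodes neither set $\rho$'s outcome nor touch $\Gamma^A$ — without changing the argument.
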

\begin{proof}
    Same as the proof for Main Lemma 1.\ref{n:P-fin}. $\xi$ nodes do not affect the argument.
\end{proof}

\begin{claim*}[Main Lemma 2.\ref{n:P-inf2}]
    If $\rho^\frown\infty \prec \delta_\omega$, then $\rho$ will act infinitely often.
\end{claim*}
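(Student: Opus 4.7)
The plan is to mirror the proof of Main Lemma 1.\ref{n:P-inf} almost verbatim, and then to verify along the way that the newly introduced $\xi$-nodes do not alter any step. Assume for contradiction that $\rho$ acts only $(k-1)$-times during the construction. By the inductive hypothesis on \ref{n:P-init2} and the sub-requirement stabilization definition for $\rho(k)$, wait for $\rho$ and $\rho(k)$ to stabilize. After this stage, every $\delta'\in\Lambda$ with $\langle \delta',k'\rangle<\langle \rho,k\rangle$ has finalized all of its selections, so whenever $\rho$ subsequently wants to act it is the unique argmin in $\Theta$ and therefore the selected node.

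Then I split into the two familiar cases according to whether $\rho$'s $k$-th intended action is to pick a new use or to enumerate its current use. In the pick-use case, if some $\eta(x)$ with $\eta^\frown\infty\preceq\rho$ denies permission, the $(\eta^\frown\infty)$-strategy forces $\rho$ to have exhausted its quota from $x$ and $\eta(x)$ to not yet be $\rho$-correct. Only finitely many such $\eta(x)$ can exist since $\rho$ has acted fewer than $k$ times, so wait for each of them to stabilize, and use the inductive hypothesis on \ref{n:P-inf2} to note that every intermediate $\rho'$ with $\eta^\frown\infty\preceq\rho'^\frown\infty\preceq\rho$ acts infinitely often. Its uses therefore eventually exceed $\USE(\Phi^A_\eta(x))$, making $\eta(x)$ eventually $\rho$-correct, so $\rho$ finally gets permission to pick its $k$-th use -- contradiction. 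In the enumerate-use case, $\rho$ has the highest priority in $\Theta$ and is therefore selected; if any $\eta\preceq\rho$ denies permission to enumerate, the construction initializes $\rho$, contradicting its stability, and if none deny then $\rho$ acts, again contradicting the assumption.

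The step that deserves care, and the main place where the tree extension could in principle bite, is verifying that the $\xi$-nodes contaminate neither the allowance decisions nor the selection order. Two observations settle this cleanly. First, the definition of $\eta$-correctness counts only uses of $\rho'$-nodes with $\rho'=\eta$ or $\rho'^\frown\infty\preceq\eta$, so $\xi$-uses play no role in whether $\eta(x)$ allows $\rho$ to act; this is stated explicitly in the outcome-of-$\eta$ paragraph, where we are told that ``$\xi$ nodes are irrelevant when determining $\rho$-correctness of $\eta(x)$ computations.'' Second, only finitely many $\xi$-nodes satisfy $\langle \xi,k'\rangle<\langle \rho,k\rangle$, and by the inductive hypothesis applied to each such sub-requirement $\xi(k')$ (using \ref{n:Q-init2} and \ref{n:Q} together with the sub-requirement-stabilization clause), all of their selections are finalized by the time $\rho(k)$ stabilizes. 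With these two observations in place, the quota-based argument of Main Lemma 1.\ref{n:P-inf} transfers verbatim, completing the proof.
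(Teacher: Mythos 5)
Your proposal is correct and takes the same route as the paper: the paper's proof of this claim is literally ``Same as the proof for Main Lemma 1.\ref{n:P-inf}; $\xi$ nodes do not affect the argument,'' and you reproduce that argument and additionally justify the non-interference of the $\xi$ nodes (via the $\rho$-correctness definition and the stabilization of the $\xi(k')$ sub-requirements), which the paper merely asserts.
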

\begin{proof}
    Same as the proof for Main Lemma 1.\ref{n:P-inf}. $\xi$ nodes do not affect the argument.
\end{proof}

\begin{claim*}[Main Lemma 2.\ref{n:Q-init2}]
    If $\xi\prec \delta_\omega$, then $\xi$ eventually stops being initialized.
\end{claim*}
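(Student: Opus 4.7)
The plan is to show that after a stage $s_0$ of higher-priority stabilization, every initialization of $\xi$ can be charged to an injury on some $\Phi^A_\eta(x)$ with $\eta^\frown\infty \preceq \xi$, and that the total such injury is finite, following the sketch the paper gives around Eq.~(\ref{eq:k2}). By the induction hypothesis on Main Lemma 2 parts~\ref{n:P-init2}--\ref{n:N-inf2} for nodes of length below $n$, I pick a stage $s_0$ past which no $\delta \prec \xi$ is ever initialized and, using $\xi \prec \delta_\omega$, no $\delta <_L \xi$ is ever accessible. After $s_0$, $\xi$ can be initialized only through its own $\xi$-strategy, namely when $\xi$ wants to act at some $\xi$-stage $s > s_0$ but some $\eta(x)$ with $\eta^\frown\infty \preceq \xi$ and $x < l_s(\eta)$ refuses to allow it; unpacking the definition of when $\eta(x)$ allows $\xi$, this refusal requires $\xi \notin \texttt{Qlist}(\eta,x,s)$, $\Phi^A_\eta(x)[s]\downarrow$, and $\xi$'s current use not exceeding $\USE(\Phi^A_\eta(x)[s])$.

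Fixing such a pair $(\eta,x)$, I would then bound the disallowances $\eta(x)$ can inflict. Inspecting the four removal clauses of Eq.~(\ref{eq:qlist-update}), after $s_0$ the only one still triggerable is quota exhaustion, so while $\xi$ remains in $\texttt{Qlist}(\eta,x,\cdot)$ the pair $(\eta,x)$ cannot initialize $\xi$. Once $\xi$ has either exhausted its quota there or was never admitted, any later initialization of $\xi$ by $\eta(x)$ must occur at a stage where $\USE(\Phi^A_\eta(x))$ has caught up with $\xi$'s current use; but $\xi$ picks a fresh large use at its first visit after each initialization, so each such event charges a distinct injury to $\Phi^A_\eta(x)$. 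Moreover, every initialization of $\xi$ simultaneously reinitializes every $\xi' \succ \xi$ and every $\delta >_L \xi$, so after $s_0$ the only positive nodes still able to injure $\Phi^A_\eta(x)$ are $\xi$ itself (which stops the moment it exhausts quota) and the $\rho \in \texttt{quota}(x)$. The combinatorial computation of Theorem~\ref{thm:nonlow-ac} (Eq.~(\ref{eq:injpow-tot})) then caps the total injury on $\Phi^A_\eta(x)$, and hence the number of times $\eta(x)$ initializes $\xi$, by the computable quantity $(x+1)^2 4^{(x+1)^2}$.

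Summing over the finitely many $\eta^\frown\infty \preceq \xi$ and finitely many $x < l_{s_0}(\eta)$ yields the computable bound $k'(\xi,s_0)$ of Eq.~(\ref{eq:k2}) on initializations triggered by these pairs. The main obstacle I anticipate is the loose end of those $x \geq l_{s_0}(\eta)$ that first become relevant at a later stage $s_x > s_0$ at which $\xi$ happens to have no follower: such $x$ are never admitted to $\texttt{Qlist}(\eta,x,\cdot)$ and can still disallow $\xi$ through the use clause. The intended fix is to note that $\xi$ is without a follower only in the gap between an initialization and its next $\xi$-visit, so any disallowance originating from a new $x$ in such a gap can be charged to the initialization that opened the gap; since those initializations are already finite by the previous paragraph, the new-$x$ contribution is finite as well, and $\xi$ is initialized only finitely often.
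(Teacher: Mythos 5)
Your proposal follows the same overall route as the paper: after stabilization of everything of higher priority, every further initialization of $\xi$ happens because some $\eta(x)$ with $\eta^\frown\infty \preceq \xi$ disallows $\xi$ through the use clause, each such disallowance is charged to a distinct injury on $\Phi^A_\eta(x)$, and the injuries are in turn bounded by the combinatorics of Theorem~\ref{thm:nonlow-ac} (Eq.~(\ref{eq:injpow-tot})), summing to $k'(\xi,s_0)$ of Eq.~(\ref{eq:k2}). You make the charging step explicit (fresh large use after each initialization $\Rightarrow$ the next disallowance by $\eta(x)$ requires $\USE(\Phi^A_\eta(x))$ to have grown), which the paper only asserts; that is a welcome addition.

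The genuine weak point is your last paragraph. First, the ``loose end'' does not actually arise in the paper's proof because the paper chooses $s_0$ to be the \emph{first post-stabilization initialization} of $\xi$, not the stabilization stage itself. At that stage the $\xi$-strategy re-assigns $\xi$ a follower in the very step in which it is initialized, and this repeats at every subsequent initialization, so $\xi$ has a follower at every stage from $s_0$ on. Hence every $x$ that first satisfies $x < l_s(\eta)$ at some $(\eta^\frown\infty)$-stage $s > s_0$ finds $\xi$ with a follower and admits $\xi$ into $\texttt{Qlist}(\eta,x,s)$ with quota exceeding $k'(\xi,s_0)$, and so such an $\eta(x)$ never initializes $\xi$ at all; this is exactly the paper's Case~1. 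Second, your proposed fix is circular as stated: you say the new-$x$ disallowances ``can be charged to the initialization that opened the gap'' and that ``those initializations are already finite by the previous paragraph,'' but the finiteness you are invoking is precisely the quantity (total post-stabilization initializations) you are trying to establish, and it includes the new-$x$ contribution. A single initialization opening a gap does not bound the number of later disallowances caused by the $x$'s that appear inside that gap. The clean repair is the one the paper uses: move $s_0$ forward to the first post-stabilization initialization, after which $\xi$ never lacks a follower and the scenario you worry about simply does not occur.
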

\begin{proof}
    Wait for $\xi$ to be the almost stabilized node, i.e.\ wait for all $\delta\prec\xi$ to stabilize. If $\xi$ is never initialized, we are done. So wait for $\xi$ to be initialized, say at stage $s_0$. We show that $\xi$ cannot be initialized more than $k'(\xi,s_0)$-times after stage $s_0$, and where $k'(\xi,s_0)$ is defined in Eq.~(\ref{eq:k2}). Assume for contradiction that $\xi$ was initialized more than $k'(\xi,s_0)$ times. Now any initialization of $\xi$ at or after stage $s_0$ must be due to $\xi$ being denied action by some $\eta(x)$ with $\eta^\frown\infty \preceq\xi$, because nodes $\delta<_L\xi$ are never visited and nodes $\delta\prec\xi$ are never initialized by near stability of $\xi$.\\
    
    \noindent First consider the case where the $[k'(\xi,s_0)+1]$-th intialization of $\xi$ was due to $\eta(x)$ with $\eta^\frown\infty \preceq\xi$ and $x>l_{s_0}(\eta)$. Since $\xi$ is always initialized from being denied action, the $\xi$-strategy will ensure that $\xi$ always has a follower. Therefore at the $(\eta^\frown\infty)$-stage $s_1$ when $x<l_{s_1}(\eta)$ for the first time after stage $s_0$, $(\eta^\frown\infty)$-strategy will put $\xi$ into $\eta(x)$'s $\texttt{Qlist}$. Furthermore, from Eqs.~(\ref{eq:k2}) and (\ref{eq:k3}), $\eta(x)$ will tolerate $k(\eta(x),s_1) \geq k'(\xi,s_1) >k'(\xi,s_0)$ initializations from $\xi$, as long as $\xi$ is not removed from $\eta(x)$'s $\texttt{Qlist}$ before $\xi$ exhausts $\eta(x)$'s quota. But premature removal is not possible from Eq.~(\ref{eq:qlist-update}), because $\xi$ is almost stabilized.\\
    
    \noindent So it must be that $[k'(\xi,s_0)+1]$-th intialization of $\xi$ was due to $\eta(x)$ with $\eta^\frown\infty \preceq\xi$ and $x\leq l_{s_0}(\eta)$. But $\eta(x)$ cannot initialize $\xi$ so often: At the end of stage $s_0$, the $\xi$-strategy will initialize all $\xi'\succ\xi$ and $\xi'>_L\xi$. Then immediately at the next $(\eta^\frown\infty)$-stage, the $(\eta^\frown\infty)$-strategy will remove these $\xi'$ from $\eta(x)$'s $\texttt{Qlist}$ to stop them from ever injuring $\eta(x)$ again. So apart from $\xi$ itself, the only $\xi''$ nodes that can injure $\eta(x)$ are those $\xi''\succeq \eta^\frown\infty$ with $\xi''\prec\xi$. But these $\xi''$ never want to act again by near stability of $\xi$, implying they can never injure $\eta(x)$. Thus the only positive nodes that can injure $\eta(x)$ are $\xi$ itself and those $\rho\in\texttt{quota}(x)$. Now before $\xi$ exhausts quota from $\eta(x)$, $\eta(x)$ cannot injure $\xi$. So wait for quota to be exhausted. Then $\xi$ will also stop injuring $\eta(x)$, so $\eta(x)$ will behave like in the proof of Theorem~\ref{thm:nonlow-ac}, with no $\xi'$ requirements to complicate counting of injury. Then by Eq.~(\ref{eq:injpow-tot}), $\eta(x)$ cannot be injured more than $(x+1)^24^{(x+1)^2}$ times, implying that $\eta(x)$ cannot initialize $\xi$ more than $(x+1)^24^{(x+1)^2}$ times, which is fewer times than $k'(\xi,s_0)$ from Eq.~(\ref{eq:k2}).
\end{proof}

\begin{claim*}[Main Lemma 2.\ref{n:Q}]
    If $\xi\prec \delta_\omega$, then $\xi$ eventually stops wanting to act.
\end{claim*}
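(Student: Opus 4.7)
The plan is to reduce ``$\xi$ wanting to act'' to mind changes of the approximation $\langle f^\xi_s(z_\xi)\rangle_s$, and then bound the latter using the fact that $f^\xi$ is $g(\xi)$-c.a.\ with $g(\xi)<\alpha$.

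First I would appeal to Main Lemma 2.\ref{n:Q-init2}, available by induction on $n$, to pass to the tail of $\xi$-stages after which $\xi$ is never initialized. From this point the follower $z_\xi$ and any standing declaration $\Delta^{A\restriction u}(z_\xi) = 1 - f^\xi_{s_0}(z_\xi)$, where $s_0$ is the stage of the most recent diagonalization, persist until $\xi$ next acts. A direct reading of the $\xi$-strategy shows that, after $z_\xi$ is fixed, $\xi$ wants to act at a stage $s$ precisely when this standing declaration is ``contradicted'', i.e.\ when $f^\xi_s(z_\xi) \neq f^\xi_{s_0}(z_\xi)$. Consequently each action of $\xi$ after stabilization of $z_\xi$ consumes at least one mind change of $\langle f^\xi_s(z_\xi)\rangle_s$ since the previous diagonalization.

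Next I would invoke Fact~\ref{fact:enum}, which provides a uniformly computable mind-change function $\langle m^\xi_s(z_\xi)\rangle_s$ into a canonical well-ordering of order type $\alpha$, with initial value bounded by $g(\xi)$. Since this sequence is nonincreasing and must drop strictly at every mind change of $f^\xi_s(z_\xi)$, and since a well-ordering admits no infinite descending chain, $f^\xi_s(z_\xi)$ changes mind only finitely many times over the whole construction. Combined with the previous paragraph, $\xi$ can want to act only finitely many times after $z_\xi$ stabilizes, which is the desired conclusion. The promised $\lim_s f^\xi_s(z_\xi) \neq \Delta^A(z_\xi)$ then falls out of the final standing declaration, since after the last action the use assigned to $z_\xi$ is never enumerated into $A$, and all lower-priority nodes either are initialized or pick uses exceeding it.

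The main obstacle I anticipate is verifying that ``wants to act'' really is controlled only by mind changes of $f^\xi_s(z_\xi)$ and not by auxiliary tree-level events --- e.g.\ follower re-assignment, or the fact that $\xi$ is accessible only at a subsequence of stages, which could in principle desynchronize $\xi$'s internal clock of ``the most recent diagonalization'' from the stagewise mind changes of $f^\xi$. The former is handled by Main Lemma 2.\ref{n:Q-init2}; the latter is harmless because the number of mind changes of $f^\xi_s(z_\xi)$ along any subsequence is bounded by the total number of mind changes, which is finite.
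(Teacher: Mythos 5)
Your proposal is correct and follows essentially the same route as the paper: appeal to Main Lemma 2.\ref{n:Q-init2} to fix the final follower $z_\xi$, observe that each subsequent ``want to act'' is triggered by a mind change of $f^\xi_s(z_\xi)$, and use the fact that $\lim_s f^\xi_s(z_\xi)$ exists (equivalently, that the approximation has only finitely many mind changes) to conclude that $\xi$ stops wanting to act. The paper's proof is just a one-sentence summary of this same chain of reasoning.
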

\begin{proof}
    Follows from the fact that $\xi$ always will want to act the moment $\Delta^A$ is not diagonalized out of $f^\xi$ via the final follower $z_\xi$, and the fact that $\lim_s f^\xi_s(z_\xi)$ exists, and from induction hypothesis with Main Lemma 2.\ref{n:Q-init2}.
\end{proof}

\begin{claim*}[Main Lemma 2.\ref{n:N-init2}]
    If $\eta\prec \delta_\omega$, then $\eta$ eventually stops being initialized.
\end{claim*}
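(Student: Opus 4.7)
The plan is to mirror the one-line proof of Main Lemma 1.\ref{n:N-init}, adjusted to account for the $\xi$-nodes that now appear among the predecessors of $\eta$. The target is to produce a stage after which none of the three mechanisms that can initialize $\eta$ can fire.

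First, I would invoke the simultaneous induction on $n$: by Main Lemma 2.\ref{n:P-init2}, 2.\ref{n:Q-init2}, and 2.\ref{n:N-init2} applied at smaller levels, every $\delta' \prec \eta$ eventually stops being initialized, so I fix a stage $s_0$ after which no such initialization happens. Using the arguments from the proofs of 2.\ref{n:P-init2} and 2.\ref{n:Q}, I would further wait for each $\rho' \preceq \eta$ to start always picking uses that exceed every $\USE(\eta''(x))$ that could block enumeration (so that $\rho'$ becomes permanently allowed to enumerate), and for each $\xi' \preceq \eta$ to stop wanting to act. Finally, for each $\eta' \prec \eta$ I would wait for its false outcome to cease appearing; since $\eta \prec \delta_\omega$, the node $\eta'^\frown(\text{non-true})$ lies to the left of $\delta_\omega$ and so is accessible only finitely often, so this waiting terminates.

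After all of the above have settled, say by stage $s_1$, I would check that the three possible initialization channels for $\eta$ are all closed. The channels are: (i) the stage-wise sweep ``initialize all nodes to the right of $\delta_{s,e+1}$'' whenever some $\delta' \prec \eta$ is assigned an outcome that places $\eta$ strictly to its right -- closed because each predecessor's true outcome is now the only one that appears; (ii) a $\rho' \preceq \eta$ being denied enumeration and thereby initializing all $\delta \succeq \rho'$ -- closed because every such $\rho'$ is now permanently allowed to enumerate; (iii) a $\xi' \preceq \eta$ wanting to act and thereby initializing all its extensions -- closed because every such $\xi'$ has stopped wanting to act. Hence $\eta$ cannot be initialized after stage $s_1$.

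The main (mild) obstacle is not any combinatorial difficulty but bookkeeping: verifying that each of the three initialization channels is actually addressed by an appropriate earlier claim in Main Lemma 2, so that no unnoticed route for initializing $\eta$ remains. Beyond this, no new argument is required.
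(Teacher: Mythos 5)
Your proof takes essentially the same approach as the paper's one-line argument (``wait for all $\rho,\xi\prec\eta$ to stabilize''), just spelled out channel by channel. One small inaccuracy worth fixing: you assert that $\eta'^\frown(\text{non-true})$ always lies to the \emph{left} of $\delta_\omega$, but if $\eta'^\frown\infty\prec\delta_\omega$ then the false outcome $\texttt{fin}$ lies to the \emph{right} and may be accessed infinitely often; the waiting you describe would then not terminate. This does not harm the conclusion, since only a leftward excursion at some $\delta'\prec\eta$ can trigger the ``initialize all nodes to the right'' sweep against $\eta$, and those leftward visits are finite precisely because $\eta\prec\delta_\omega$ --- so you should restrict your step (4) to the leftward non-true outcomes (and note that the same reasoning also applies at $\rho'\prec\eta$ nodes with $\rho'^\frown\texttt{fin}\preceq\eta$).
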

\begin{proof}
    Wait for all $\rho, \xi\prec\eta$ to stabilize. Then $\eta$ will never be initialized again.
\end{proof}

\begin{claim*}[Main Lemma 2.\ref{n:N-fin2}]
    If $\eta^\frown\texttt{fin} \prec \delta_\omega$, then $\Phi^A_\eta$ is not total.
\end{claim*}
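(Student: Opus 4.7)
The plan is to mirror the proof of Main Lemma 1.\ref{n:N-fin} almost verbatim, invoking the induction hypotheses from Main Lemma 2 in place of those from Main Lemma 1. So I will assume for contradiction that $\Phi^A_\eta$ is total, and then show that $\eta$-expansionary stages occur infinitely often, which would force $\eta^\frown\infty \prec \delta_\omega$ and contradict the hypothesis $\eta^\frown\texttt{fin} \prec \delta_\omega$. Throughout, I work in $\eta$-stages after $\eta$ has stabilized (which exists by induction hypothesis via Main Lemma 2.\ref{n:N-init2}).

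Fix an arbitrary $x\in\omega$; the goal is to show that $\Phi^A_\eta(x)$ eventually converges and is $\eta$-correct at every subsequent $\eta$-stage. Recall from the $\eta$-outcome design that $\eta$-correctness only refers to assigned uses of $\rho$-nodes with $\rho^\frown\infty \preceq \eta$ (the paper explicitly notes that $\xi$-nodes are irrelevant to $\eta$-correctness, since they are finitary and treated as never acting again when lower priority nodes are accessed). There are only finitely many such $\rho$, and by induction hypothesis via Main Lemma 2.\ref{n:P-inf2}, each such $\rho$ acts infinitely often, so any use it currently holds below $\USE(\Phi^A_\eta(x))$ is eventually enumerated, and each freshly picked use is large. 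Consequently, after finitely many stages, every such $\rho$ has its assigned use (if any) strictly above $\USE(\Phi^A_\eta(x))$, witnessing $\eta$-correctness of the computation.

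For the actual convergence and stability of $\Phi^A_\eta(x)$, the additional input beyond Main Lemma 1.\ref{n:N-fin} is that $\xi$-nodes $\prec\eta$ also enumerate elements into $A$. But by induction hypothesis via Main Lemma 2.\ref{n:Q}, each such $\xi$ eventually stops wanting to act, so only finitely many enumerations along the path come from $\xi$-nodes. Combining this with the $\rho$-analysis above, $A \restriction \USE(\Phi^A_\eta(x))$ stabilizes, so $\Phi^A_\eta(x)$ converges to a stable value at a stage where the computation is also $\eta$-correct.

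Since $x$ was arbitrary and $\Phi^A_\eta$ is assumed total, $l_s(\eta)$ grows unboundedly along $\eta$-stages, so infinitely many of them are $\eta$-expansionary. By the rule for the $\eta$-outcome, this gives $\eta^\frown\infty \prec \delta_\omega$, contradicting $\eta^\frown\texttt{fin} \prec \delta_\omega$. The only substantive addition over Main Lemma 1.\ref{n:N-fin} is the observation that $\xi$-nodes neither enter the definition of $\eta$-correctness nor produce more than finitely many enumerations, so they cannot obstruct either the stabilization or the correctness argument; this is where I expect readers to want the clearest pointer to Main Lemma 2.\ref{n:Q}.
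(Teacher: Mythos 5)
Your proposal matches the paper's approach exactly: the paper's proof of this claim is literally ``Same as the proof for Main Lemma 1.\ref{n:N-fin},'' and your argument reproduces that proof while correctly observing that $\xi$-nodes do not enter the definition of $\eta$-correctness. Your third paragraph is redundant, though not wrong---once you assume $\Phi^A_\eta$ is total for the contradiction, $A\restriction\USE(\Phi^A_\eta(x))$ stabilizes automatically, so there is no need for a separate accounting of $\xi$-node enumerations.
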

\begin{proof}
    Same as the proof for Main Lemma 1.\ref{n:N-fin}.
\end{proof}

\begin{claim*}[Main Lemma 2.\ref{n:N-inf2}]
    If $\eta^\frown\infty \prec\delta_\omega$, then $\Phi^A_\eta$ is total, and there is computable function $\phi_\eta(-):\omega\to\alpha$ such that for all $x$, $\Phi^A_\eta(x)[s]$ changes its mind less than $\phi_\eta(x)$ times during the $\eta$-stages.
\end{claim*}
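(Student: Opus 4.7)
The plan is to adapt the proof of Main Lemma 1.\ref{n:N-inf} to handle the new $\xi$-contribution to injury, and then extract an explicit ordinal bound $<\alpha$.

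Fix $x$ and work in $(\eta^\frown\infty)$-stages after $\eta$ stabilizes (Main Lemma 2.\ref{n:N-init2}). Let $s_x$ denote the first such stage with $x<l_s(\eta)$. By the $(\eta^\frown\infty)$-permission rules, only nodes in $\texttt{quota}(x)$ or in $\texttt{Qlist}(\eta,x,s_x)$ can injure $\eta(x)$ after $s_x$; both are finite, and the list only shrinks after being set. The $\rho$-injurers are handled by the edge-layer induction of Main Lemma 1.\ref{n:N-inf}, which applies verbatim since $\xi$-nodes are irrelevant to $\rho$-correctness; this yields the bound $(x+1)^2 4^{(x+1)^2}$ of Eq.~(\ref{eq:injpow-tot}). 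For each $\xi\in\texttt{Qlist}(\eta,x,s_x)$, the same counting used in Main Lemma 2.\ref{n:Q-init2} shows that $\xi$ is initialized at most $K_\xi$ times (for some computable finite $K_\xi$ extractable from the construction), and each of the $K_\xi+1$ resulting followers of $\xi$ enumerates at most $g(\xi)$ uses since $f^\xi$ is $g(\xi)$-c.a.. So $\xi$'s injury on $\eta(x)$ is at most $g(\xi)\cdot(K_\xi+1)$. Summing, $\eta(x)$ sees only finite total injury, so $\Phi^A_\eta(x)$ converges to a stable value, giving totality and $l_s(\eta)\to\infty$.

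For the mind-change bound, define
\[
\phi_\eta(x) \;=\; (x+1)^2 4^{(x+1)^2} \;+\; \sum_{\xi\in\texttt{Qlist}(\eta,x,s_x)} g(\xi)\cdot(K_\xi+1) \;+\; 1.
\]
Each $g(\xi)<\alpha$ by Fact~\ref{fact:enum}, the sum is finite, and $\alpha$ is a power of $\omega$, so Fact~\ref{fact:sum-power-omega} gives $\phi_\eta(x)<\alpha$. For computability, we hardcode $\eta$'s finite stabilization stage $s_0$ into the algorithm; non-uniformity in $\eta$ is acceptable since the claim only asserts existence of $\phi_\eta$ for each $\eta$ on the true path. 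Given $s_0$, compute $s_x$ by searching for the first $(\eta^\frown\infty)$-stage $\geq s_0$ at which $\Phi^A_\eta\restriction(x+1)\!\downarrow$ (exists by the totality just proved), read off $\texttt{Qlist}(\eta,x,s_x)$, compute $g(\xi)$ from Fact~\ref{fact:enum}, and compute $K_\xi$ from the construction; then evaluate the displayed sum.

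The main obstacle is justifying a uniform finite bound $K_\xi$ on each $\xi$'s initializations, given that the tolerance $k(\eta(x),s)$ grows with $s$ via Eqs.~(\ref{eq:k2})--(\ref{eq:k3}). The argument parallels Main Lemma 2.\ref{n:Q-init2}: once the $\rho$-injurers stop by the edge-layer induction, $\USE(\Phi^A_\eta(x))$ stabilizes, so any fresh large use picked by $\xi$ eventually exceeds $\USE(\Phi^A_\eta(x))$, and after that point $\eta(x)$ can no longer deny $\xi$. Thus the initializations $\xi$ incurs from $\eta(x)$ are confined to the interval during which $\xi$ still appears in $\texttt{Qlist}(\eta,x,\cdot)$ plus at most one extra round following removal, and this total is bounded by the accumulated quota identified in Eq.~(\ref{eq:k2}), giving an explicit computable $K_\xi$.
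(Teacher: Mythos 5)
Your decomposition of the injury into a $\rho$-contribution and a $\xi$-contribution is the right starting point, and your treatment of the $\xi$-contribution (bounded by the quota $K_\xi$ assigned at the stage $\eta(x)$ first converges, with each interval between initializations costing at most $g(\xi)$) matches the paper's definition of $\beta(\eta(x))$ in Eq.~(\ref{eq:beta}). However, there is a genuine gap in your claim that the edge-layer induction of Main Lemma 1.\ref{n:N-inf} ``applies verbatim'' and yields the unchanged $\rho$-bound $(x+1)^2 4^{(x+1)^2}$ of Eq.~(\ref{eq:injpow-tot}).

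The edge-layer induction does not go through unchanged, and the very fact you cite as the reason it should --- that $\xi$-nodes are irrelevant to $\rho$-correctness --- is precisely the reason it fails. After $\rho$ exhausts its quota from $x$, $\rho$ only picks a new use when $\eta(x)$ is $\rho$-correct. But since $\rho$-correctness is blind to $\xi$-uses, a $\xi$-node can subsequently injure $\eta(x)$, driving $\USE(\Phi^A_\eta(x))$ above $\rho$'s new use and thereby \emph{triggering} $\rho$ to injure $\eta(x)$ again when it enumerates. In the first construction the trigger was always a lower-edge-layer $\rho'$ (Claim~\ref{claim:trigger}); here it may instead be a $\xi\in\texttt{Qlist}(\eta,x,t_\eta(x))$ (Claim~\ref{claim:trigger2}), and this extra source of triggering shows up as the additional $\beta(\eta(x))$ term in the recursion Eq.~(\ref{eq:injpow2}). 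Since each edge layer inherits this extra term, the dependence on the $\xi$-injury is \emph{multiplicative} across layers, not additive: the paper's concrete bound is $[\beta(\eta(x)) + x]\cdot(x+1)4^{(x+1)^2}$, whereas your $\phi_\eta(x) = (x+1)^2 4^{(x+1)^2} + \sum g(\xi)(K_\xi+1) + 1$ is too small to bound the injury. Your closing paragraph, which argues that ``once the $\rho$-injurers stop by the edge-layer induction, $\USE(\Phi^A_\eta(x))$ stabilizes,'' is also backwards --- the $\rho$-injurers keep being re-triggered by $\xi$-nodes, so their stopping is not a consequence of the $\xi$-free Claim~\ref{claim:edge}; one must instead fold the flat $\xi$-injury budget $\beta(\eta(x))$ into the edge-layer recursion from the start, as the paper does in Claims~\ref{claim:trigger2} and~\ref{claim:edge-inductive2}.
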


\noindent Like in Main Lemma 1.\ref{n:N-inf} of Theorem~\ref{thm:nonlow-low2}, this claim is the heart of the argument on why the construction works. Define $S_\eta$, $t_\eta(x)$, and $a_{\eta,s}(x)$ like in Eqs.~(\ref{eq:S}), (\ref{eq:t}), and (\ref{eq:g}). Note that as before, for fixed $\eta$, these sets and functions are computable, but not uniformly so. For a given $x\in\omega$, always assume we are working in the $S_\eta$-stages after $t_\eta(x)$.\\

\noindent We prove the claim by induction on $x$. We want to show that for all $x\in\omega$, $\eta(x)$ gets injured by positive nodes less than $\alpha$ times, and we can compute this injury bound $\phi_\eta(x)$ as a computable function of $x$. The proof outline follows the proof of Theorem~\ref{thm:nonlow-ac}, but with additional injuries from $\xi$ nodes.\\

\noindent From construction, at stage $t_\eta(x)$, $\eta(x)$ will set $\texttt{Qlist}(\eta,x,t_\eta(x))$ according to Eq.~(\ref{eq:qlist2}), deciding once and for all which $\xi$ nodes are allowed to injure $\eta(x)$. Then by Eq.~(\ref{eq:k3}), $\eta(x)$ also decides to tolerate $k(\eta(x), t_\eta(x))$ initializations from each such $\xi$ node. Between intializations, $\xi$ cannot inflict more than $g(\xi)$ injury, therefore the total injury on $\eta(x)$ from all the $\xi$ nodes cannot exceed
\begin{align} \label{eq:beta}
    \beta(\eta(x)) := \sum_{\xi\in \texttt{Qlist}(\eta,x,t_\eta(x))} g(\xi) \cdot (k(\eta(x), t_\eta(x)) +1),
\end{align}
which is less than $\alpha$ from Fact~\ref{fact:sum-power-omega}.\\

\noindent Now consider the injury from $\rho$ nodes. If $\rho$ can injure $\eta(x)$, then from Fact~\ref{fact:inj-set}, $\rho$ must lie in the quota $\texttt{quota}(x)$ of $\eta(x)$. And there are only finitely many such $\rho$ from Eq.~(\ref{eq:quota}). Recall that in Theorem~\ref{thm:nonlow-ac}, the main proof idea was that after $\rho$ exhausts quota, $\rho$ can only injure $\eta(x)$ if triggered by some $\rho'$ of smaller edge layer. This claim still holds except that $\rho$ might also have been triggered by $\xi$ nodes:

\begin{claim} \label{claim:trigger2}
    If $\delta$ is the node that triggered $\rho$ to injure $\eta(x)$ at stage $s$, then either $\delta=\xi\in \texttt{Qlist}(\eta,x,t_\eta(x))$, or $\delta=\rho'$ where $\rho'\succeq \rho^\frown\infty$. In particular, if $\delta=\rho'$, then $\rho'$ has smaller edge layer than $\rho$.
\end{claim}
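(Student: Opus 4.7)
I would mirror the proof of Claim~\ref{claim:trigger}, splitting on whether the triggering node $\delta$ is of $\rho$-type or $\xi$-type. Let $s_0 < s_1 < s$ denote, respectively, the $\rho$-stage after $\rho$ exhausted its quota from $x$ where $\rho$ picked the use $u$, the first stage strictly after $s_0$ at which some node other than $\rho$ injured $\eta(x)$ (so $\delta$ acts at $s_1$, enumerating a use $u'$), and the stage at which $\rho$ itself injures $\eta(x)$. Since $s_0$ is a $\rho$-stage following quota exhaustion, the $(\eta^\frown\infty)$-strategy forces $\eta(x)$ to be $\rho$-correct at $s_0$, so $u > \USE(\Phi^A_\eta(x)[s_0])$; and because no injury happened strictly between $s_0$ and $s_1$, we also have $u' < \USE(\Phi^A_\eta(x)[s_1]) = \USE(\Phi^A_\eta(x)[s_0]) < u$.

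In the case $\delta = \rho'$, the argument essentially imports that of Claim~\ref{claim:trigger}. I rule out $\rho' <_L \rho^\frown\infty$ (otherwise $\rho$ would be initialized at $s_1$ and could not enumerate $u$ at $s$); I rule out $\rho'$ lying strictly right of $\rho^\frown\infty$ or extending $\rho^\frown\texttt{fin}$ (else $\rho'$ was initialized at $s_0$ when the construction cleared everything right of $\rho^\frown\infty$, so $u'$ would be chosen large after $s_0$, contradicting $u' < u$); and I rule out $\rho' \preceq \rho$ (else $\rho$-correctness of $\eta(x)$ at $s_0$ is violated). The only remaining possibility is $\rho' \succeq \rho^\frown\infty$, and by the definition of edge layer --- the supremum being taken over extensions of the node's $\infty$-successor, a strictly smaller subtree for $\rho'$ than for $\rho$ --- this forces $\rho'$ to have strictly smaller edge layer than $\rho$.

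The case $\delta = \xi$ is the new content. For $\xi$ to be allowed to act at $s_1$, the $(\eta^\frown\infty)$-strategy requires one of three conditions: $\xi \in \texttt{Qlist}(\eta, x, s_1)$, or $\Phi^A_\eta(x)[s_1] \uparrow$, or $\xi$'s use exceeds $\USE(\Phi^A_\eta(x)[s_1])$. The third fails because $\xi$'s action is an injury, so $u' < \USE(\Phi^A_\eta(x)[s_1])$; the second fails because $\Phi^A_\eta(x)[s_1]$ agrees with the already-convergent $\Phi^A_\eta(x)[s_0]$ (no injury in between, and $s_0 \geq t_\eta(x)$). Hence $\xi \in \texttt{Qlist}(\eta, x, s_1)$. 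To push this back to $t_\eta(x)$, I appeal to the two facts that (i) $\eta$ is not reinitialized during the interval $[t_\eta(x), s_1]$ by stabilization of $\eta$, so Eq.~(\ref{eq:qlist2}) never resets the list in that interval, and (ii) between resets, Eq.~(\ref{eq:qlist-update}) only removes members. Monotonicity then gives $\xi \in \texttt{Qlist}(\eta, x, s_1) \subseteq \texttt{Qlist}(\eta, x, t_\eta(x))$.

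The principal obstacle is the Case 2 bookkeeping: verifying that $\texttt{Qlist}$-membership propagates backward in time from $s_1$ to $t_\eta(x)$. All the genuine combinatorial content is inherited from Claim~\ref{claim:trigger}; the new ingredient is simply to recognize that the permission rule for $\xi$-nodes in the $(\eta^\frown\infty)$-strategy was engineered precisely so that \emph{$\xi$ ever injures $\eta(x)$} entails \emph{$\xi$ lies in $\texttt{Qlist}(\eta, x, t_\eta(x))$}.
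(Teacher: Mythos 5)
Your proof is correct and takes essentially the same approach as the paper, but you supply considerably more detail: the paper's proof of Claim~\ref{claim:trigger2} is a single sentence (``If $\delta\neq\xi$, then the same argument for Claim~\ref{claim:trigger} works here too''), leaving the $\delta=\xi$ case entirely implicit. You make that case explicit, which is genuinely worth doing: you observe that $\xi$'s enumeration of a too-small use at $s_1$ rules out the $\Phi^A_\eta(x)[s_1]\uparrow$ and use-too-large permission clauses of the $(\eta^\frown\infty)$-strategy, so $\xi$ must have been granted permission because $\xi\in\texttt{Qlist}(\eta,x,s_1)$, and then you chase the membership back to $t_\eta(x)$ using the monotone (shrink-only between resets) behaviour of $\texttt{Qlist}(\eta,x,\cdot)$ together with $\eta$ being past its last initialization. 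Your $\rho'$-case also explicitly rules out $\rho'\succeq\rho^\frown\texttt{fin}$ by initialization at the $s_0$ visit, a sub-case that Claim~\ref{claim:trigger}'s wording ($\rho\not<_L\delta$) only covers by a slight abuse; this is a small improvement in rigor. Overall this is the same decomposition and the same key ideas, just written out in full rather than relying on the reader to supply the $\xi$-permission analysis.
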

\begin{proof}
    If $\delta\neq\xi$, then the same argument for Claim~\ref{claim:trigger} works here too.
\end{proof}

\begin{claim} \label{claim:edge-inductive2}
    If $\rho\in\texttt{quota}(x)$ has edge layer $r$, then
    \begin{align} \label{eq:injpow2}
        \texttt{InjPow}_\eta(x,\rho) \leq \texttt{quotaFor}_x(\rho) +\beta(\eta(x)) +\sum_{\substack{\rho'\in \texttt{quota}(x)\\ \rho' \text{ has edge layer } <r}}\; \texttt{InjPow}_\eta(x,\rho').
    \end{align}
\end{claim}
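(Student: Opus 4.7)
The plan is to follow the structure of the proof of Claim~\ref{claim:edge-inductive}, adding a new summand to account for the extra class of triggering nodes (the $\xi$ nodes) that Claim~\ref{claim:trigger2} now permits. The first term $\texttt{quotaFor}_x(\rho)$ will absorb all injuries $\rho$ inflicts on $\eta(x)$ before $\rho$ exhausts its quota from $x$; these are bounded directly by definition of $\texttt{quotaFor}_x(\rho)$. The remainder of the argument will bound the number of post-exhaustion injuries.

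After $\rho$ exhausts quota from $x$, the $(\eta^\frown\infty)$-strategy permits $\rho$ to pick a new use only at stages where $\eta(x)$ is $\rho$-correct. Hence every post-exhaustion injury arises in the standard pattern: $\rho$ picks a use $u$ at a $\rho$-correct stage $s'$ (so $u > \USE(\Phi^A_\eta(x)[s'])$), and later enumerates $u$ at a stage $s$ where $u < \USE(\Phi^A_\eta(x)[s])$. By Definition~\ref{def:trigger} there is a unique triggering node $\delta$ between stages $s'$ and $s$, and between any two consecutive post-exhaustion injuries by $\rho$, a fresh use must be picked at a $\rho$-correct stage, so distinct $\rho$-injuries have distinct triggering events.

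Applying Claim~\ref{claim:trigger2}, each such triggering node is either some $\xi \in \texttt{Qlist}(\eta,x,t_\eta(x))$ or some $\rho'$ of strictly smaller edge layer than $\rho$. The total number of $\rho'$-triggering events is bounded by $\sum_{\rho' \text{ edge layer } < r} \texttt{InjPow}_\eta(x,\rho')$, while the total number of $\xi$-triggering events is bounded by the total $\xi$-injury on $\eta(x)$, which by the discussion preceding Eq.~(\ref{eq:beta}) is at most $\beta(\eta(x))$. Adding these three contributions and the pre-exhaustion bound $\texttt{quotaFor}_x(\rho)$ yields Eq.~(\ref{eq:injpow2}).

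I expect the main subtlety to be verifying that the $\beta(\eta(x))$ bound from Eq.~(\ref{eq:beta}) really controls \emph{all} post-$t_\eta(x)$ $\xi$-injury on $\eta(x)$. This requires checking that $\texttt{Qlist}(\eta,x,\cdot)$ only shrinks after $t_\eta(x)$ (so that every such $\xi$-injury is inflicted by some $\xi \in \texttt{Qlist}(\eta,x,t_\eta(x))$ and thereby counted within the $k(\eta(x),t_\eta(x))+1$ initializations budgeted with $g(\xi)$ mind changes each). Monotonicity follows cleanly from Eq.~(\ref{eq:qlist-update}) together with Main Lemma 2.\ref{n:N-init2} applied to $\eta$: since $\eta$ is never initialized again after $t_\eta(x)$, Eq.~(\ref{eq:qlist2}) never resets the list at later $(\eta^\frown\infty)$-stages, and Eq.~(\ref{eq:qlist-update}) only removes elements. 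The remaining loophole where $\eta(x)$ permits a $\xi \notin \texttt{Qlist}$ to act because $\xi$'s use already exceeds $\USE(\Phi^A_\eta(x))$ is harmless, since such an action cannot injure $\eta(x)$.
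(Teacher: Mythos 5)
Your proposal is correct and follows the same route as the paper's (much terser) proof: the first term absorbs pre-exhaustion injuries, and the other two terms count triggering events via Claim~\ref{claim:trigger2}, with $\xi$-triggers bounded by $\beta(\eta(x))$ and $\rho'$-triggers by the injury powers of smaller edge layers. The monotonicity check on $\texttt{Qlist}(\eta,x,\cdot)$ that you flag is a legitimate detail the paper leaves implicit.
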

\begin{proof}
    The $\texttt{quotaFor}_x(\rho)$ term comes from $\rho$ exhausting quota from $x$, and the $\beta(\eta(x))$ and summation terms come from Claim~\ref{claim:trigger2}.
\end{proof}

\begin{proof}
    (Of Main Lemma 2.\ref{n:N-inf2}): From Fact~\ref{fact:inj-set}, working only in the $(\eta^\frown\infty)$-stages after stage $t_\eta(x)$,
    \begin{equation*}
        \text{Number of times } \eta(x) \text{ gets injured } \leq \beta(\eta(x)) +\sum_{\rho\in\texttt{quota}(x)} \texttt{InjPow}_\eta(x,\rho).
    \end{equation*}
    $\beta(\eta(x))$ term is computable as a function of $x$. Also, each term within the summation is computably bounded below $\alpha$ by the recursive relation Eq.~(\ref{eq:injpow2}) and Fact~\ref{fact:sum-power-omega}. Together with the fact that $\texttt{quota}(x)$ is a computable set and Fact~\ref{fact:sum-power-omega} again, the total injury on $\eta(x)$ will be computable and less than $\alpha$.\\
    
    \noindent To give a concrete bound on injury, like in the proof of Theorem~\ref{thm:nonlow-ac}, we assume that the pairing function used for the definition of $\texttt{quota}(x)$ is such that
    \[\max \{\max(|\rho|, k):\; \langle \rho,k\rangle \in \texttt{quota}(x)\} < x.\]
    We repeat the inductive proof of Theorem~\ref{thm:nonlow-ac}, but with the role of $\texttt{quota}(x)$ replaced by $\beta(\eta(x))+\texttt{quota}(x)$. Then given $\rho\in\texttt{quota}(x)$ with edge layer $r$, $\texttt{InjPow}_\eta(x,\rho)$ will be bounded by $[\beta(\eta(x)) +x]\cdot (r+1)4^{(r+1)^2}$, so
    \begin{align*} \label{eq:injpow-tot2}
        \text{Number of times } \eta(x) \text{ gets injured} \leq [\beta(\eta(x)) +x]\cdot (x+1)4^{(x+1)^2}.
    \end{align*}
\end{proof}

\bibliographystyle{plain}           
\bibliography{bibliography}        

\begin{thebibliography}{10}

\bibitem{downey1993array}
Rod Downey.
\newblock Array nonrecursive degrees and lattice embeddings of the diamond.
\newblock {\em Illinois journal of mathematics}, 37(3):349--374, 1993.

\bibitem{downey2015transfinite}
Rod Downey and Noam Greenberg.
\newblock A transfinite hierarchy of lowness notions in the computably
  enumerable degrees, unifying classes, and natural definability, 2015.

\bibitem{downey2007totally}
Rod Downey, Noam Greenberg, and Rebecca Weber.
\newblock Totally $\omega$-computably enumerable degrees and bounding critical
  triples.
\newblock {\em Journal of Mathematical Logic}, 7(02):145--171, 2007.

\bibitem{downey1990array}
Rod Downey, Carl Jockusch, and Michael Stob.
\newblock Array nonrecursive sets and multiple permitting arguments.
\newblock In {\em Recursion theory week}, pages 141--173. Springer, 1990.

\bibitem{downey1996array}
Rod Downey, Carl Jockusch, and Michael Stob.
\newblock Array nonrecursive degrees and genericity.
\newblock {\em Computability, Enumerability, Unsolvability (Cooper, Slaman,
  Wainer, eds.), London Mathematical Society Lecture Notes Series},
  224:93--105, 1996.

\bibitem{downey1987t}
Rodney~G Downey and Carl~G Jockusch.
\newblock T-degrees, jump classes, and strong reducibilities.
\newblock {\em Transactions of the American Mathematical Society},
  301(1):103--136, 1987.

\bibitem{downey1997contiguity}
Rodney~G Downey and Steffen Lempp.
\newblock Contiguity and distributivity in the enumerable turing degrees.
\newblock {\em The Journal of Symbolic Logic}, 62(4):1215--1240, 1997.

\bibitem{ishmukhametov1999weak}
Shamil Ishmukhametov.
\newblock Weak recursive degrees and a problem of spector.
\newblock {\em Recursion theory and complexity (Kazan, 1997)}, 2:81--87, 1999.

\bibitem{lachlan1972embedding}
Alistair~H Lachlan.
\newblock Embedding nondistributive lattices in the recursively enumerable
  degrees.
\newblock In {\em Conference in Mathematical Logic-London'70}, pages 149--177.
  Springer, 1972.

\bibitem{soare1978recursively}
Robert~I Soare.
\newblock Recursively enumerable sets and degrees.
\newblock {\em Bulletin of the American Mathematical Society},
  84(6):1149--1181, 1978.

\end{thebibliography}

\end{document}